\DeclareMathAlphabet{\mathpzc}{OT1}{pzc}{m}{it}
\newtheorem{theorem}{Theorem}[section]         
\newtheorem{lemma}[theorem]{Lemma}
\newtheorem{prop}[theorem]{Proposition}
\theoremstyle{remark}
\newtheorem{remark}[theorem]{Remark}
\newtheorem{notation}[theorem]{Notation}
\theoremstyle{definition}
\newtheorem{defi}[theorem]{Definition} 
\newcommand{\T}{\bf{T}}
\newcommand{\calA}{\mathcal{A}}
\newcommand{\calB}{\mathcal{B}}
\newcommand{\calC}{\mathcal{C}}
\newcommand{\calR}{\mathcal{R}}
\newcommand{\calX}{\mathcal{X}}
\newcommand{\calY}{\mathcal{Y}}
\newcommand{\calZ}{\mathcal{Z}}
\newcommand{\calF}{\mathcal{F}}
\newcommand{\frakF}{\mathfrak{F}}
\newcommand{\frakG}{\mathfrak{G}}
\newcommand{\Sur}{\bf{S}}
\newcommand{\tT}{\tilde{\T}}
\title{Mutation of type $D$ friezes}
\author{A. Garcia Elsener and K. Serhiyenko}
\begin{document}

\maketitle

\begin{abstract}
In this article we study mutation of friezes of type $D$.  We provide a combinatorial formula for the entries in a frieze after mutation.  The two main ingredients in the proof include a certain transformation of a type $D$ frieze into a sub-pattern of a frieze of type $A$ and the mutation formula for type $A$ friezes recently found by Baur et al. 
\end{abstract}

\section{Introduction}
Friezes of type $A$ are beautiful combinatorial objects that were first introduced and studied in 1970's by Conway and Coxeter \cite{C1,CC1,CC2}.  Interest in friezes renewed after the development of cluster algebras in 2001, as the two notions proved to be related \cite{Assem1, CCh}.  Motivated by this newfound connection, friezes of type $D$, $E$, $\widetilde{A}$ and various other generalizations appeared in the literature, see for example \cite{M-G} and references therein.

For every quiver $Q$, without oriented cycles of length two,  there is an associated cluster algebra $\mathcal{A}_Q$.  The algebra $\mathcal{A}_Q$ is contained in the field of rational functions $\mathbb{Q}(x_1, x_2, \dots, x_n)$, where $n$ denotes the number of vertices in $Q$.  It is defined by a set of generators, called cluster variables, that are computed recursively via an operation called \emph{mutation} \cite{FZ}. When $Q$ is acyclic, the cluster category $\mathcal{C}_Q$ provides a categorification of $\mathcal{A}_Q$  \cite{BMRRT,CCS06}.  
This category admits a cluster-tilting object $T$, a rigid object of $\mathcal{C}_Q$ consisting of $n$ indecomposable nonisomorphic direct summands. 
Also, there is an associated cluster character $\chi_T: \text{obj}\,\mathcal{C}_Q\to \mathcal{A}_Q$ \cite{CCh} that maps rigid indecomposable objects in $\mathcal{C}_Q$ to cluster variables in $\mathcal{A}_Q$. Precomposing $\chi_T$ with the evaluation homomorphism that sets $x_i=1$ for all $i\in[1,n]$ we obtain a map $F: \text{obj}\,\mathcal{C}_Q\to \mathbb{Q}$.   Furthermore, the Laurent phenomenon and positivity proved in \cite{FZ} and \cite{LS} respectively, imply that $F$ maps objects of $\mathcal{C}_Q$ to positive integers.   In particular, if we take the Auslander--Reiten quiver of $\mathcal{C}_Q$, where the quiver $Q$ arises from a triangulation of a polygon or a once-punctured disk, and apply $F$ to every indecomposable we obtain a frieze of type $A$ or $D$, see \cite[Section 5]{CCh} and \cite{BM} respectively. This describes the relation between friezes and cluster algebras.

The mutation operation is crucial to the definition of cluster algebras.  Compatible notions of mutation also appear in the setting of surface triangulations, quivers, cluster-tilting objects, and cluster-tilted algebras.  Recently, the concept of frieze mutations was introduced and studied in \cite{baur2018mutation} for friezes of type $A$.  The  main formula is purely combinatorial, however its proof relies strongly on the particular properties of the associated cluster categories and the representation theory of cluster-tilted algebras.

Motivated by these results, in this article we study mutations of type $D$ friezes.  However, the combinatorics of surface triangulations and the associated representation theory is more complicated in type $D$.   In particular, the specialized cluster character map $F$ that sends objects of the cluster category $M$ to positive integers in type $A$ amounts to counting the number of submodules up to isomorphisms.  In type $D$, this is no longer the case, and one needs to compute Euler characteristic of the Grassmannians of submodules of $M$ of a given dimension vector.  Also, the formula in type $A$ relies on the fact that it is easy to compute the support of $\text{Hom}_{\mathcal{C}_Q}(M, -)$ 
and middle terms of extensions in the Auslander--Reiten quiver.  However, this becomes more complex in type $D$ and involves analyzing different cases. 

Therefore to avoid these difficulties, we first transform our initial type $D$ frieze into a sub-pattern of a larger one of type $A$. 

This approach also appears in \cite{Ma} in the study of the corresponding cluster variables. 
Next, in Theorem~\ref{mut-thm} we prove that this operation is compatible with mutations, where a single mutation in type $D$ yields two mutations in type $A$.   Applying the main result of \cite{baur2018mutation} we obtain the desired mutation formula for type $D$ friezes in Theorem \ref{main-theorem}.  The formula is presented in purely combinatorial terms and relies on the geometric models \cite{CCS06, BM, FST,S} and particular symmetries of this larger type $A$ frieze.

The article is organized as follows.  In Section \ref{Background} we recall the relevant background on friezes of type $A$ and $D$, and their mutation in type $A$.  Then in Section \ref{from type D}, we explain how to pass from type $D$ to type $A$ and prove that this operation behaves well with mutations.  Finally, we define the elements appearing in our main result and obtain  the mutation formula for type $D$ friezes in Section~\ref{mut-D}.  Here we also provide a detailed example.


\section{Background} \label{Background}
We begin by reviewing the relevant background information and establishing notation.  Here we recall the definition of friezes of types $A$ and $D$ and their construction from triangulations of polygons and once punctured disks respectively.  We also describe relations between friezes and arcs in a surface, and state the main theorem of  \cite{baur2018mutation} on mutation of friezes in type $A$. 

\subsection{Friezes of types $A$ and $D$} In this section we define friezes and review some key properties. 

\subsubsection{Friezes of type $A$}
\begin{defi}\label{def:A}
A \emph{frieze of type $A_{n}$} is a grid of integers $\mathfrak{F}$ as in Figure~\ref{friezeA}, with $n+4$ infinite rows, where the first and the last rows consist entirely of 0's, the second and the second to last rows consist entirely of 1's, and the remaining entries are positive integers.  In addition, the entries in $\mathfrak{F}$ satisfy the \emph{diamond rule} which says that for every diamond formed by the neighboring entries $\begin{smallmatrix}&a\\b&&c\\&d\end{smallmatrix}$ we have $bc-ad=1$.
\end{defi}

We say that the top and bottom rows of 0's and 1's are {\it trivial}.  Hence, a frieze of type $A_n$ consists of $n$ nontrivial rows.   It is shown in \cite[Problem 21]{CC2} that friezes of type $A_n$ have a horizontal period of $n+3$.    An example of a frieze of type $A_3$, which corresponds to the triangulation of hexagon given in Figure \ref{quiddity-sec}, is given in Figure~\ref{typeA-example}.

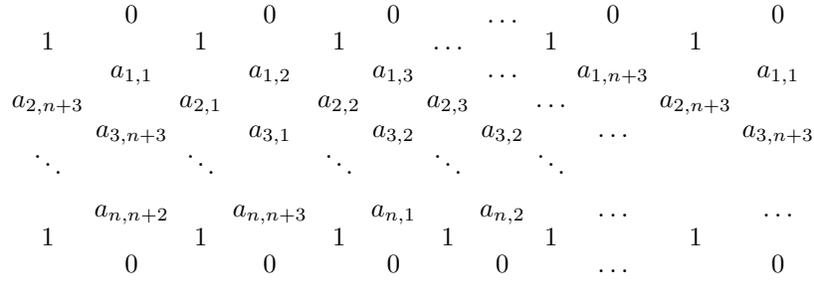
\begin{figure}
$$\xymatrix@C=-2pt@R=-2pt{&0&&0&&0&& \dots&&0&& 0\\
1&&1&&1&& \dots&&1&& 1\\
&a_{1,1}&& a_{1,2} && a_{1,3} && \dots && a_{1,n+3} && a_{1,1}\\
a_{2,n+3} && a_{2,1} && a_{2,2}&&a_{2,3} && \dots&& a_{2,n+3}\\
&a_{3,n+3} && a_{3,1} && a_{3,2}&&a_{3,2} && \dots&& a_{3,n+3}\\
 \ddots && \ddots &&  \ddots  && \ddots  && \ddots\\
&a_{n,n+2}& &a_{n,n+3} && a_{n,1} && a_{n,2} && \dots && \dots \\
1&& 1 && 1&&1 && 1&&1\\
&0&&0&&0&& 0 && \dots&& 0}$$
\caption{Frieze of type $A_n$.}
\label{friezeA}
\end{figure}

\begin{figure}[h!]
\centering
\scalebox{.9}{
\def\svgwidth{3.5in}
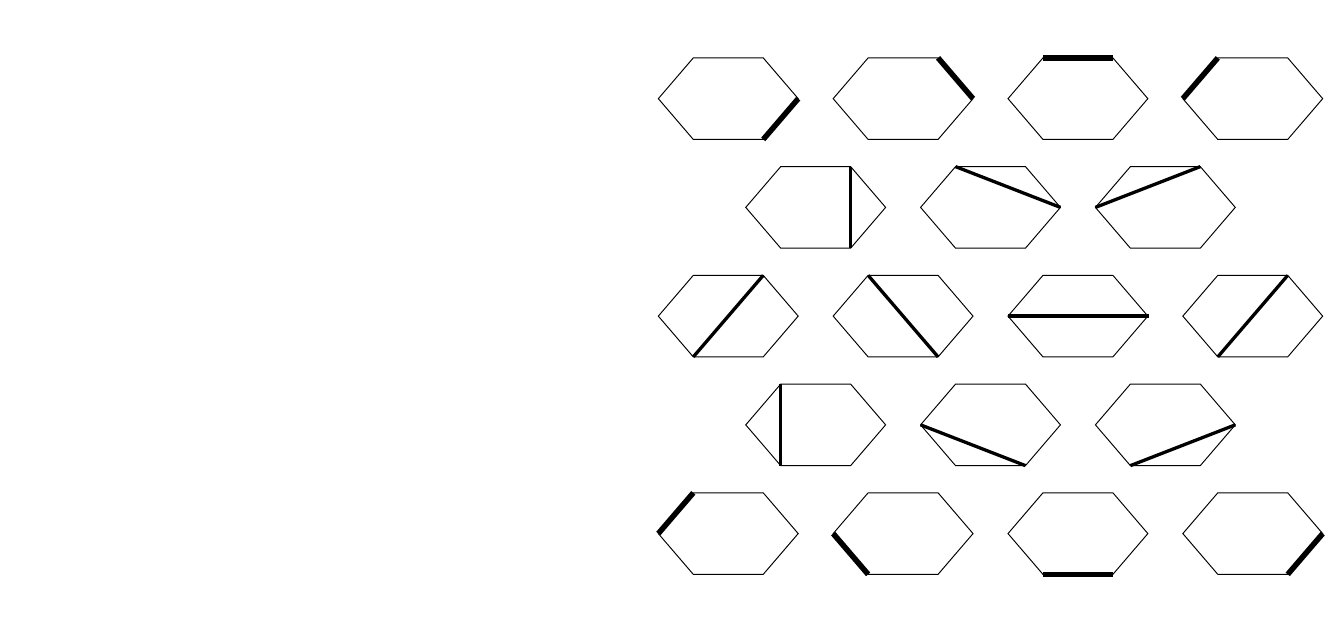}
\caption{Example of a frieze of type $A_3$ (left). Category of arcs (right).}
\label{typeA-example}
\end{figure}

\subsubsection{Friezes of type $D$}
\begin{defi}\label{def:D}
A \emph{frieze of type $D_{n}$} is a grid of integers $\mathfrak{F}$ as in Figure~\ref{friezeD}, with $n+2$ infinite rows, where the first row consist entirely of 0's, the second row consist entirely of 1's, and the remaining rows consist of positive integers.  The entries in the first $n$ rows of $\mathfrak{F}$ satisfy the {diamond rule}.   In addition, the entires in the last three rows satisfy the following relations. Whenever there is a configuration $\begin{smallmatrix}&a\\b&&c\\d&&e\end{smallmatrix}$ of neighboring entries then $bc-a=de-a=1$, and whenever there is $\begin{smallmatrix}&a\\b&&c\\&d\\&e\end{smallmatrix}$ then $bc-ade=1$.
\end{defi}

If $n$ is even then the frieze $\mathfrak{F}$ has horizontal period $n$.  If $n$ is odd then the first $n$ rows are $n$-periodic, but there is a twist happening in the last two rows, where the entries $a_{n-1,n}$ and $a_{n,n}$ appearing on the far right of the diagram in Figure~\ref{friezeD} switch places.    In this case $\mathfrak{F}$ is $2n$-periodic.  This follows from \cite[Theorem 1.1]{BM}. 
Similarly, we say that the first two rows of a type $D_n$ frieze are {\it trivial}.   Hence a $D_n$ frieze has $n$ nontrivial rows.    An example of a frieze of type $D_4$, which corresponds to the triangulation of the punctured disc given in Figure \ref{quiddity-sec}, is given in Figure~\ref{typeD-example}.

\begin{figure}[h!]
\centering
\scalebox{.95}{
\def\svgwidth{4.2in}
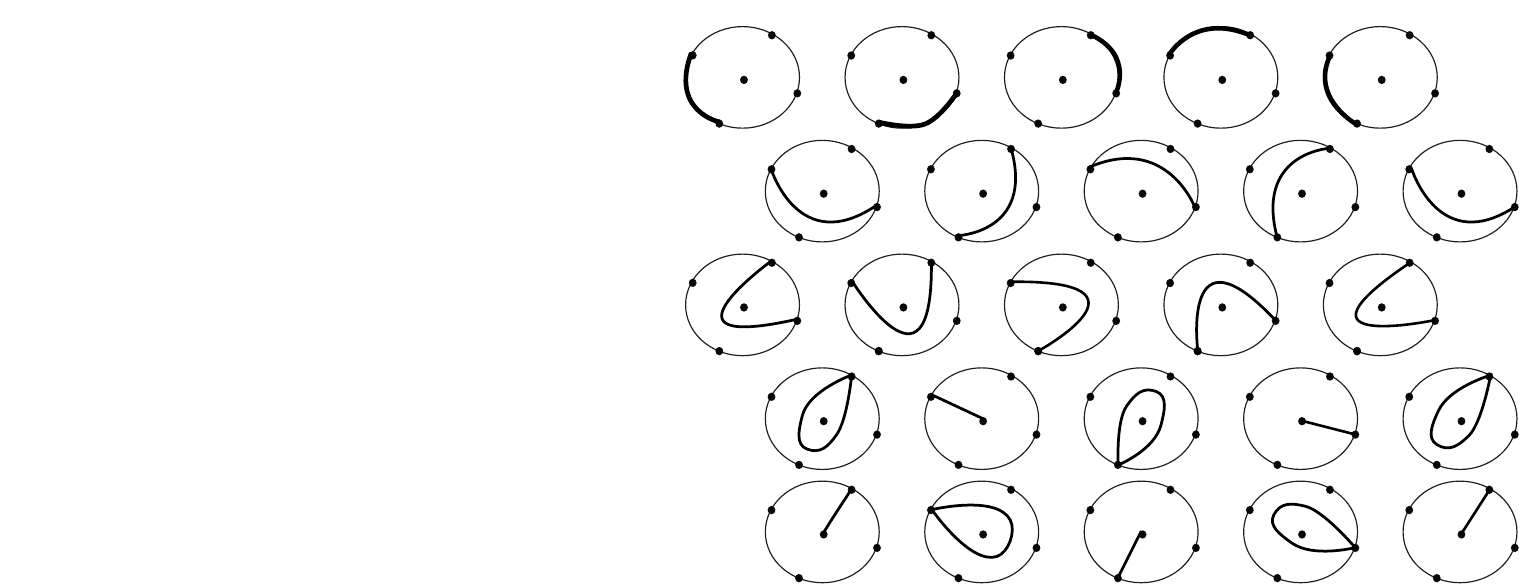}
\caption{Example of a frieze of type $D_4$ (left). Category of arcs (right).}
\label{typeD-example}
\end{figure}

We will sometimes refer to friezes of type $A_n$ and $D_n$ simply as friezes of type $A$ and $D$ respectively.  Moreover, we always consider friezes up to horizontal translation, and in type $D$ we also consider friezes up to interchanging the last two nontrivial rows.

\begin{figure}
$$\xymatrix@C=-3pt@R=-3pt{&0&&0&&0& \dots &0 && 0\\
1&&1&&1&& \dots&&1\\
&a_{1,1}&& a_{1,2}  && \dots && a_{1,n} && a_{1,1}\\
a_{2,n} && a_{2,1} && a_{2,2} && \dots&& a_{2,n}\\
& \ddots && \ddots &&  \ddots    && \ddots && \ddots\\
 &&a_{n-2,n} && a_{n-2,1} && a_{n-2,2} && \dots &&  \\
 &&&a_{n-1,n} && a_{n-1,1} && a_{n-1,2} && \dots && a_{n-1,n} \\
 &&&a_{n,n} && a_{n,1} && a_{n,2} && \dots && a_{n,n} \\
}$$
\caption{Frieze of type $D_n$ when $n$ is even.}
\label{friezeD}
\end{figure}
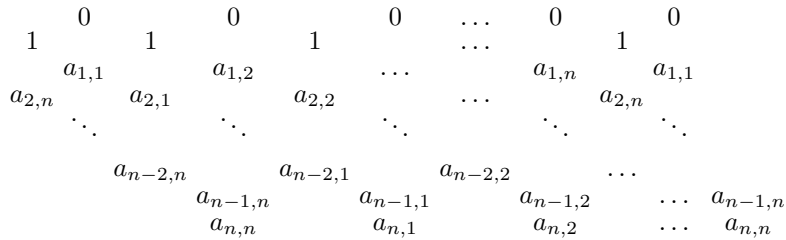


\subsection{Friezes from triangulations}

In general it is difficult to find an array of integers as described above that satisfies all the conditions. However, the friezes we study are related to triangulations of surfaces, which allows for their explicit construction. In particular, friezes of type $A$ are in bijection with triangulations of polygons, and friezes of type $D$ can be obtained from triangulations of a once punctured disk.  We review the necessary definitions below, following \cite{FST}. 

Let $\bf{S}$ denote an oriented Riemann surface, possibly with boundary $\partial \bf{S}$, and let $\bf{M}\subset\bf{S}$ be a finite subset of $\bf{S}$.  Furthermore, we require that each boundary component has nontrivial intersection with $\bf{M}$.   We call the elements of $\bf{M}$ \emph{marked points}, we call the elements of $\bf{M}$ that lie in the interior of $\bf{S}$ \emph{punctures}, and we call the pair $(\bf{S},\bf{M})$ a \emph{marked surface}.  In the subsequent sections we will only consider the case when the marked surface is a disk with $\abs{\bf{M}}\geq 4$ and at most one puncture.  However, we describe the general situation below to avoid cases. 

We define an \emph{arc} in $(\bf{S},\bf{M})$ to be a curve $\gamma$ in $\bf{S}$ such that its endpoints are marked points, and it is disjoint from $\bf{M}$ and $\partial \bf{S}$ otherwise.  Also, we require that $\gamma$ does not intersect itself, except possibly at the endpoints, and it is not contractable into $\bf{M}$ or into the boundary of $\bf{S}$.  An arc $\gamma$ is considered up to isotopy relative to the endpoints of $\gamma$.   We say that two arcs $\gamma_1$ and $\gamma_2$ in $\bf{S}$ are \emph{noncrossing} if there exist curves in their relative isotopy classes that are nonintersecting except possibly at their endpoints.  An 
\emph{ideal triangulation} $\bf{T}$ of $(\bf{S},\bf{M})$ is defined as a maximal collection of pairwise noncrossing arcs.

Given an ideal triangulation $\T$ of a disk (or punctured disk) $\Sur$ with $m$ marked points on the boundary, we define a sequence of positive integers as follows.
Let $R= \Sur \setminus \{ \gamma \colon \gamma \in \T \}$ be the collection of open regions inside $\Sur$. For $i \in [1,m]$ we define $a_i$ to be the number of regions in $R$ locally visible in a small neighborhood around $i$. See Figure \ref{quiddity-sec}.

\begin{figure}[h!]
\centering
\scalebox{.85}{
\def\svgwidth{3.6in}
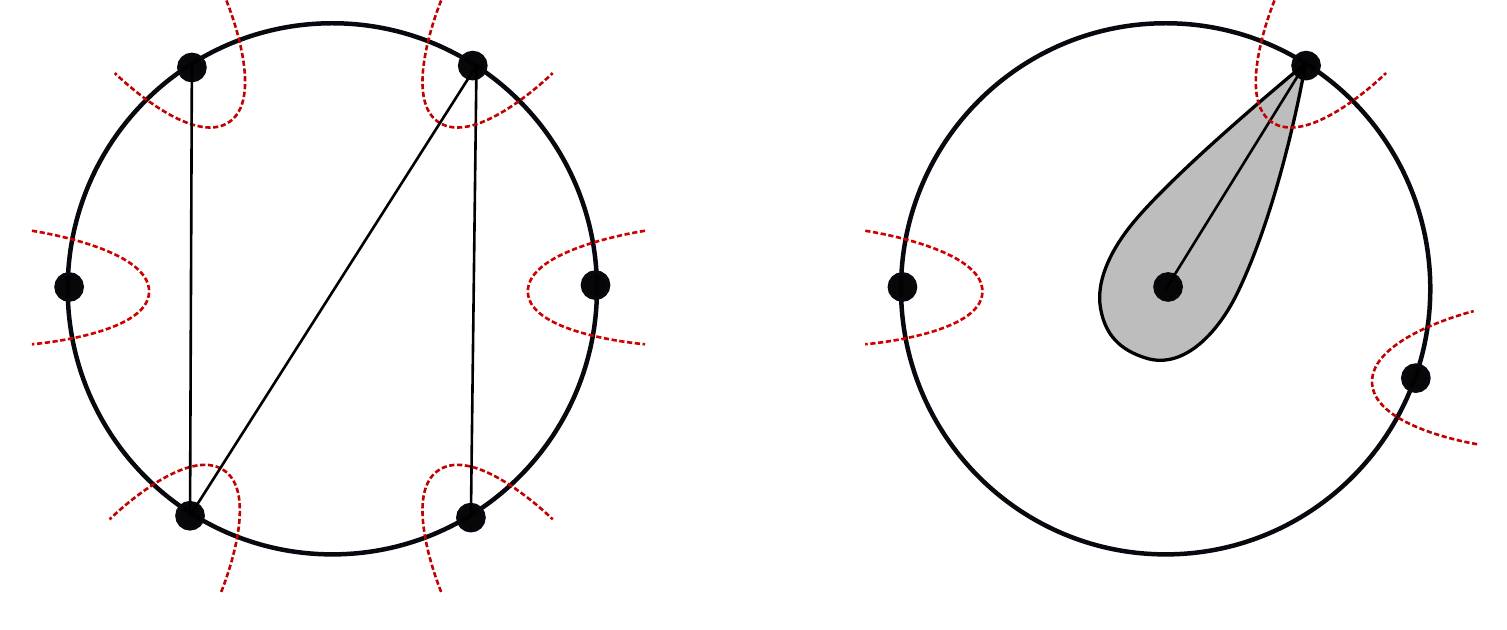}
\caption{Quiddity sequences: $(1,3,2,1,3,2)$ (left) and  $ (1,3,5,2)$ (right). These examples correspond to the friezes in Figures \ref{typeA-example} and \ref{typeD-example} respectively.}
\label{quiddity-sec}
\end{figure}

Thus, we obtain a sequence of $m$ positive integers moving counter-clockwise
around the boundary of the disk.  This sequence is called a \emph{quiddity sequence}, and it yields a first nontrivial row in a frieze $\frakF_{\T}$ with period $m$.  In the case of a disk without punctures, we can apply the diamond rule to obtain the entries in the second nontrivial row, then in the third row, and so on.  Continuing in this way we eventually obtain rows of 1's and 0's, the last two rows in $\frakF_{\T}$ by \cite[Problem 28]{CC2}.  This construction results in a frieze of type $A_{m-3}$ with $m-3$ nontrivial rows and period $m$.   Moreover, this gives a bijection between friezes of type $A_{m-3}$ and triangulations of $m$-gons \cite[Problems 28 and 29]{CC1,CC2}.  The frieze of type $A_3$ coming from a triangulation of a disk in Figure~\ref{quiddity-sec} is given in Figure~\ref{typeA-example}.

Similarly, in the case of a disk with one puncture, the quiddity sequence also yields the first row of the frieze of type $D_m$ with period $m$.  Again we obtain the first $m-2$ nontrivial rows via the diamond rule.  The remaining two rows can be uniquely determined by the other relations that define type $D_m$ frieze and the number of triangles in $\bf{T}$ incident to the puncture \cite[Proposition 6.8]{BM}.  The frieze of type $D_4$ coming from a triangulation of a punctured disk in Figure~\ref{quiddity-sec} is given in Figure~\ref{typeD-example}.

\begin{remark} \hfill
\begin{itemize}
\item[(a)]  In what follows, we will only work with friezes of type $D$ defined by quiddity sequences as above.
\item[(b)] Unlike in type $A$, there are friezes of type $D$ that do not come from quiddity sequences. See for example the frieze in \cite[Appendix A]{BM}.  
However, by \cite[Proposition A.2]{BM} any type $D$ frieze pattern with at least one 1 in the last two nontrivial rows arises from a quiddity sequence.
\end{itemize}
\end{remark}

\begin{remark}
Here we only consider ideal triangulations, also known as triangulations without tags.  In type $D$ it is possible to define a frieze from a tagged triangulation, however up to interchanging the last two nontrivial rows, it coincides with a frieze arising from an ideal triangulation.     
\end{remark}

From now on the term triangulation will always mean an ideal triangulation.

Above, we described how to obtain friezes from triangulations via quiddity sequences.  While the first nontrivial row of a frieze can be read off from the triangulation directly, the remaining entries in the frieze are computed recursively.  There is a more direct approach to obtain the entries in a frieze $\mathfrak{F}$, where each entry $a_{i,j}$ in $\mathfrak{F}$ comes from a corresponding arc $\gamma_{i,j}$ in the surface which we now describe. For the next part of the section we follow \cite{CCS06,S,MSW}.


\subsubsection{Category of arcs: associating a position in the frieze to each arc.}\label{arc_category}

In type $A_n$, this correspondence can be described as follows, where we use notation of Figure~\ref{friezeA} to denote the entries in a frieze $\frak{F}$.  Each entry in the first nontrivial row, i.e. the row given by the quiddity sequence, comes from an arc whose endpoints are separated by two boundary segments.   More precisely, every such arc is associated to a marked point, the common endpoint of these two boundary segments, which then gives an entry of the quiddity sequence.  An entry $a_{i,j+1}$ is directly to the right of $a_{i,j}$ in $\mathfrak{F}$ whenever the arc $\gamma_{i,j+1}$ can be obtained from $\gamma_{i,j}$ by moving each endpoint of $\gamma_{i,j}$ counterclockwise to the next marked point on the boundary.  Finally, given an entry $a_{1,j}$ in the first nontrivial row, the entries $a_{2,j}, a_{3,j}, \dots, a_{n,j}$ in $\mathfrak{F}$ on the southeast diagonal starting with $a_{1,j}$ correspond to the arcs $\gamma_{2,j}, \gamma_{3,j}, \dots, \gamma_{n,j}$ that are obtained by fixing one endpoint of $\gamma_{1,j}$ and moving the other endpoint counterclockwise to the next marked point one step at a time such that  each time we obtain an arc in the surface.   The category of arcs for a hexagon is depicted in Figure~\ref{typeA-example}.

In type $D_n$, the correspondence between entries in a frieze and arcs in a once-punctured disk is analogous, except for the last two nontrivial rows.  The entries in the last two rows correspond to arcs in the surface that are attached to the puncture or loop around the puncture.  Here, we omit the precise description of these entries, as it will not be important in what follows.  We refer to \cite[Sections 2.3 and 2.5]{S} for additional details.  The category of arcs for a once-punctured disk with four marked points on the boundary is depicted in Figure~\ref{typeD-example}.   

In addition, we extend the correspondences above to include the trivial rows of 1's, as shown in Figures \ref{typeA-example} and \ref{typeD-example}.  In this way, the rows of 1's in a frieze correspond to boundary segments of a disk and a once-punctured disk respectively.


\subsubsection{Cluster character map: associating an integer to each arc}\label{CCmap}

Let $({\bf S},{\bf M})$ be a polygon or a once-punctured disk with a triangulation ${\T}$ and let ${\T}=\{\gamma_1, \dots, \gamma_n\}$. We can associate a quiver $Q$ to ${\T}$ and obtain a corresponding cluster algebra $\mathcal{A}_Q\subset \mathbb{Q}(x_1, \dots, x_n)$, such that the generators of $\mathcal{A}_Q$, called {\it cluster variables}, are in bijection with arcs in the surface \cite[Theorem 7.11]{FST}. 
  
Moreover, there is an explicit combinatorial formula, known as the {\it Laurent expansion formula}, that yields a cluster variable $L_{\gamma}(x_1, \dots, x_n)$ for each arc $\gamma$ in the surface \cite[Theorem 4.9]{MSW}.   We omit the precise expression of the formula, because it is involved, and it will not be important for the rest of the discussion.   We only remark that each $L_{\gamma}(x_1, \dots, x_n)$ is a Laurent polynomial with positive integer coefficients.  The entry in the frieze $\frakF_{\T}$ assigned to $\gamma$ is then obtained via the evaluation of $L_\gamma (x_1, \ldots, x_n )$ at $x_i =1$ for all $i$  \cite[Proposition~5.2]{CCh}.  Therefore, the entry in the frieze equals the number of terms in $L_\gamma (x_1, \ldots, x_n )$ counting multiplicities.  Together with the description of how arcs relate to frieze entries given in Section~\ref{arc_category}, this gives a direct way to compute frieze entries from a triangulation without using the numbers in the quiddity sequence and the diamond rule recurrence.  For an arc $\gamma$ one obtains a certain graph, known as the snake graph, and then counts the number of perfect matchings of this graph (i.e. subsets of the edges of the graph such that each vertex belongs to exactly one edge).  The number of perfect matchings equals corresponding entry in the frieze.  The exact construction of the snake graph is complicated and uses the information of how the arc $\gamma$ crosses other arcs of the fixed starting triangulation $\T$.  For concrete examples that illustrate these computations in detail we refer to lecture notes \cite[Chapter~3]{Sch18}, but we omit the detailed explanations here.

In the Laurent expansion formula, the number of terms in the sum depends on the crossing pattern between $\gamma$ and arcs in $\T$.   The two simplest cases are as follows.  If there is no crossing, that is $\gamma = \gamma_i$ is an arc in $\T$, then the corresponding Laurent polynomial is a single variable $x_i$.  If $\gamma$ crosses a single arc $\gamma_i$ in $\T$ then the Laurent polynomial has the form $\dfrac{x_b x_d + x_c x_e x_f}{x_i}$ (formula also follows explicitly from Ptolemy relations in type $A$ and $D$, see \cite[Section 2.2]{FP}), where some of the variables appearing in the numerator may be 1.

\begin{remark}\label{frieze_entries} 
For us it will be important to know the two facts,  
which are a direct application of the Laurent expansion formula.  
\begin{enumerate}

\item In a frieze $\frakF_{\T}$ of type $A$ or $D$, the entry $a_{i,j}=1$ if and only if the corresponding arc $\gamma_{i,j}$ is in the triangulation $\T$.   

\item In a frieze $\frakF_{\T}$ of type $A$ the entry $a_{i,j}=2$ if and only if the corresponding arc $\gamma_{i,j}$ crosses a unique arc in the triangulation $\T$. 
\end{enumerate}
\end{remark}


\subsection{Mutation of type $A$ friezes}\label{section-mutation-type-A}

It is simplest to state the definition of mutation in terms of triangulations.   A \emph{self-folded} triangle is defined as the {grey} triangle in Figure~\ref{quiddity-sec}.  It consists of two arcs with a common endpoint, where one of the arcs, called the \emph{radius}, goes to the puncture while the other encloses the puncture in a loop.

\begin{defi}\label{def:mut}
Let $\bf{T}$ be a triangulation of $(\bf{S}, \bf{M})$, then for any arc $ a \in \bf{T}$ that is not a radius of a self-folded triangle there exists a unique arc $a'$ such that 
$$\mu_{a}(\mathbf{T}):= \mathbf{T} \setminus\{a \}\cup \{{a'} \}$$ 
is again a triangulation of $(\bf{S}, \bf{M})$.  We call $\mu_{a}(\bf{T})$ the {\it mutation} of $\bf{T}$ at $a$.  For a fixed $a$, we will also denote $\mu_{a}(\bf{T})$ by ${\bf T}'$ to simplify the notation. 
\end{defi}

Mutation of friezes of type $A$ was described in \cite{baur2018mutation}.  Given a frieze $\frakF_{\bf{T}}$, the authors find a formula for the entries in a new frieze $\frakF_{\bf{T}'}$ obtained from $\frakF_{\bf{T}}$ by mutation at an arc $a$.   It relies on subdividing the frieze $\frakF_{\T}$ into regions that depend on the particular configuration of 1's.   For each interior arc $a$ of a triangulation, there is a natural quadrilateral with edges $\{b,c,d,e\}$ that can be associated to $a$, such that the arc $a$ is a diagonal of the quadrilateral (for an example see Figure~\ref{type-A}).

\begin{figure}[h!]
\centering\scalebox{.87}{
\def\svgwidth{1.8in}
\begingroup%
  \makeatletter%
  \providecommand\color[2][]{%
    \errmessage{(Inkscape) Color is used for the text in Inkscape, but the package 'color.sty' is not loaded}%
    \renewcommand\color[2][]{}%
  }%
  \providecommand\transparent[1]{%
    \errmessage{(Inkscape) Transparency is used (non-zero) for the text in Inkscape, but the package 'transparent.sty' is not loaded}%
    \renewcommand\transparent[1]{}%
  }%
  \providecommand\rotatebox[2]{#2}%
  \newcommand*\fsize{\dimexpr\f@size pt\relax}%
  \newcommand*\lineheight[1]{\fontsize{\fsize}{#1\fsize}\selectfont}%
  \ifx\svgwidth\undefined%
    \setlength{\unitlength}{273.38208508bp}%
    \ifx\svgscale\undefined%
      \relax%
    \else%
      \setlength{\unitlength}{\unitlength * \real{\svgscale}}%
    \fi%
  \else%
    \setlength{\unitlength}{\svgwidth}%
  \fi%
  \global\let\svgwidth\undefined%
  \global\let\svgscale\undefined%
  \makeatother%
  \begin{picture}(1,0.99587195)%
    \lineheight{1}%
    \setlength\tabcolsep{0pt}%
    \put(0,0){\includegraphics[width=\unitlength,page=1]{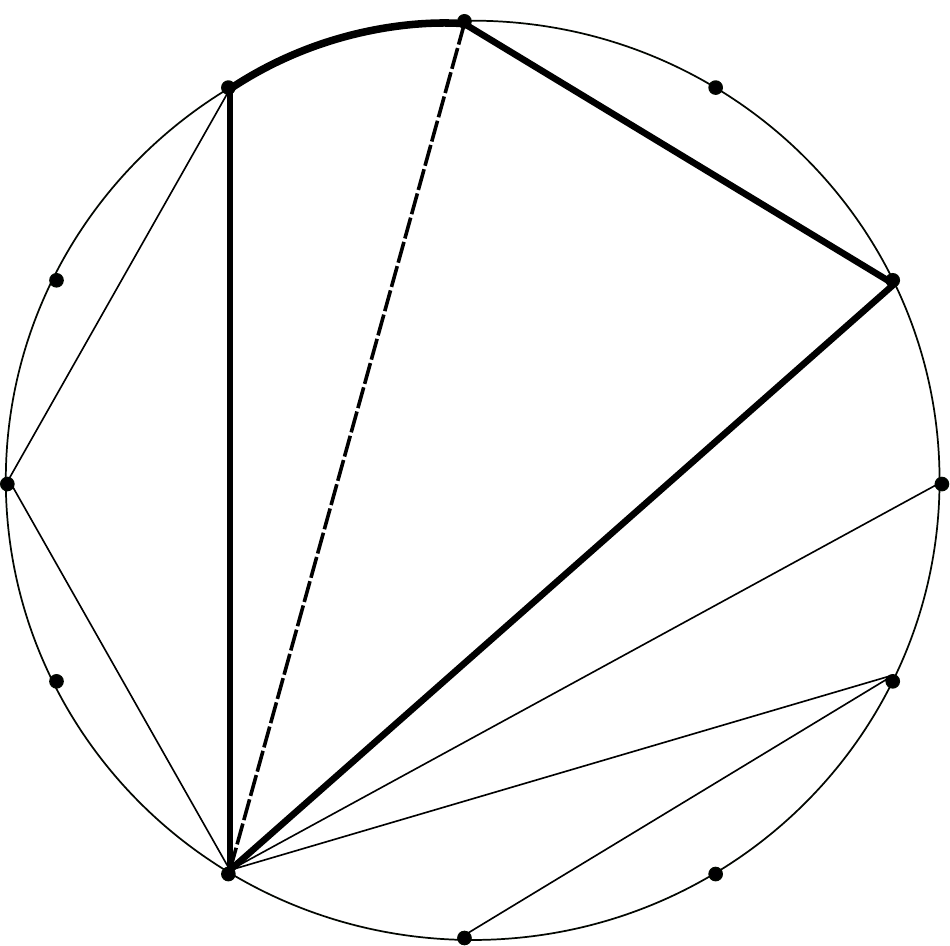}}%
    \put(0.40674103,0.53089105){\color[rgb]{0,0,0}\makebox(0,0)[lt]{\lineheight{1.25}\smash{\begin{tabular}[t]{l}$a$\end{tabular}}}}%
    \put(0.25737828,0.66668299){\color[rgb]{0,0,0}\makebox(0,0)[lt]{\lineheight{1.25}\smash{\begin{tabular}[t]{l}$b$\end{tabular}}}}%
    \put(0.29830663,0.97363887){\color[rgb]{0,0,0}\makebox(0,0)[lt]{\lineheight{1.25}\smash{\begin{tabular}[t]{l}$e$\end{tabular}}}}%
    \put(0.51023871,0.40838437){\color[rgb]{0,0,0}\makebox(0,0)[lt]{\lineheight{1.25}\smash{\begin{tabular}[t]{l}$c$\end{tabular}}}}%
    \put(0.56122656,0.83420626){\color[rgb]{0,0,0}\makebox(0,0)[lt]{\lineheight{1.25}\smash{\begin{tabular}[t]{l}$d$\end{tabular}}}}%
    \put(0,0){\includegraphics[width=\unitlength,page=2]{triangulation-type-A.pdf}}%
    \put(0.59231328,0.71509164){\color[rgb]{0,0,0}\makebox(0,0)[lt]{\lineheight{1.25}\smash{\begin{tabular}[t]{l}$a'$\end{tabular}}}}%
  \end{picture}%
\endgroup%
}
\caption{Local configuration for the arc $a$.  Mutation at $a$ yields a new triangulation with arc $a'$.}
\label{type-A}
\end{figure}

\begin{remark} 
We note that some of $b,c,d,e$ might be on the boundary, and hence might not correspond to arcs in $\bf{T}$, as is the case of the arc $e$ in Figure \ref{type-A}, but  this allows us to define the regions in a uniform way. 
Recall from Section~\ref{arc_category} that the boundary arcs correspond to the rows of 1's. In terms of categorification, they correspond to projective-injective objects in a certain Frobenius category, that can be associated to the disk and contains the cluster category $\mathcal{C}_{Q_{}}$.  We refer to \cite[Example 5.3]{JKS} for a more detailed explanation.  
\end{remark}

\begin{defi}\label{def:pivot}
An {\it elementary move} in the arcs category is defined by moving a single endpoint of an arc $\gamma$ counterclockwise one step while keeping the second one fixed.  
\end{defi}

These moves provides a skeleton for $\frakF_{\T}$ in type $A$, given by arrows $\nearrow, \searrow$ connecting a given entry $m_{\gamma}$ in the frieze with its immediate neighbors located in the rows just above and below $m_{\gamma}$. From now on, when we mention friezes and patterns, the patterns also have the inherited arrow structure. For simplicity, we will not draw said arrows in our figures.

\begin{defi}  Let $m_1, \dots, m_t$ be entries in a frieze $\frakF$ of type $A$.   A path $m_1 \to \cdots \to m_t$ starting at $m_1$ and ending at $m_t$ is called {\it sectional} if all $m_i$ lie in different rows of $\frakF$.  
\end{defi}

Equivalently, a sectional path starting at $m_1$ is either a diagonal path in NE or SE direction.   

\begin{remark}\label{sec_paths}
It follows from the description of type $A$ friezes in Section 2.2.1, that there exists a sectional path starting at $m_1$ and ending at $m_t$ if and only if the corresponding arc $\gamma_{m_t}$ can be obtained from $\gamma_{m_1}$ by moving one endpoint of $\gamma_{m_1}$ counterclockwise while keeping the second endpoint fixed.  
 \end{remark}

By Remark~\ref{frieze_entries} every arc $\gamma \in {\bf T}$ corresponds to a unique entry 1 and a unique entry 2 in the frieze $\frakF_{\T}$ which we denote by $1_{\gamma}$ and $2_{\gamma}$ respectively.   Recall the relative configuration of the arcs $a,b,c,d,e$ in Figure~\ref{type-A}.   Note, the mutation of the arc $a\in \T$ results in a new arc $a'$ that crosses only $a$, so Remark~\ref{frieze_entries}(2) implies that the entry $2_a$ corresponds to the arc $a'$ in the frieze $\frakF_{\T}$. 
By Remark~\ref{sec_paths} the arcs $a,b,c,d,e$  yield sectional paths in $\frakF_{\T}$ starting/ending in 1's or the entry $2_a$ as in  Figure~\ref{fig:regions}.  For example, we can move one endpoint of $a$ counterclockwise while keeping the other one fixed until we obtain arc $d$, which means that there exists a sectional path in $\frakF_{\T}$ starting at $1_a$ and ending in $1_d$.  The existence of other paths follows in a similar way.  These sectional paths subdivide $\frakF_{\T}$ into four disjoint regions $\mathcal{X}, \mathcal{Y}, \mathcal{Z}, \mathcal{F}$ that we define next.

\begin{defi}\label{def_regionsA} (Regions for type $A$)

\begin{enumerate}
\item Let $\calX$ be the region in $\frakF_{\T}$ consisting of entries in the interior of two rectangles determined by the sets of vertices $\{1_b, 2_a, 1_c\}$ and $\{1_d, 2_a, 1_e\}$. 
\item Let $\calY$ be the region in $\frakF_{\T}$ consisting of entries in the interior of two rectangles determined by the sets of vertices $\{1_e, 1_a, 1_b\}$ and $\{1_c, 1_a, 1_d\}$.
\item Let $\calZ$ be the region in $\frakF_{\T}$ consisting of entries in the interior of the two rectangles with vertices $\{1_a, 1_b, 1_d, 2_a\}$ and $\{2_a, 1_e, 1_c, 1_a\}$.   The region $\overline{\mathcal{Z}}$ is defined as $\mathcal{Z}$ together with the entries on the neighboring sectional paths except for $1_x$ with $x \in\{b,c,d,e\}$. 
\end{enumerate}

\end{defi}

In particular, note that $1_a, 2_a \in \overline{\mathcal{Z}}$.  We also define the region $\calF$ that contains all the remaining entries in $\frakF_{\T}$.

Before stating the theorem, we recall certain projections associated to entries in various regions.

\begin{figure}[h!]
\centering
\scalebox{.95}{\def\svgwidth{6.8in}
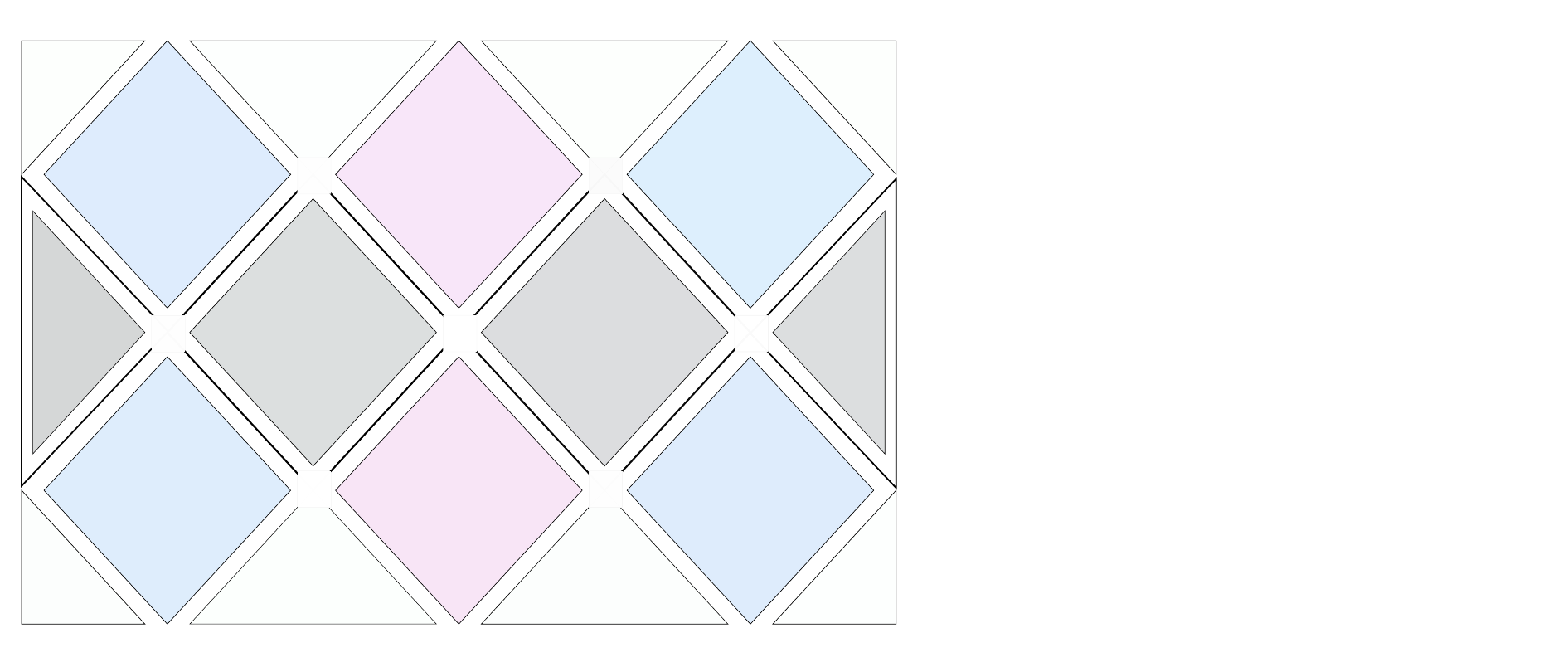}
\caption{Regions in $\frakF_{\T}$ defined by mutation at $a$ and projections for $m \in \calX$ and $m \in \calZ$.}
\label{fig:regions}
\end{figure}

Now we define several projections for a frieze of type $A$, needed to state the last result of this section.  Later, there will be analogue projections for type $D$.  
 
 \begin{defi}\label{projections-A} (Projections for type $A$)
 \begin{enumerate}
 \item Given $m$ in $\calX$ or $\calY$ there exist unique sectional paths  $\gamma^+, \gamma^-$  starting and ending at $m$ respectively, such that $\gamma^+$ and $\gamma^-$ intersect two other sectional paths: one passing through $1_a$ and the other passing through $2_a$. The projection of $m$ onto the closest of these two paths along $\gamma^+$ is denoted by $\pi^+_1(m)$ and the projection onto the second path is $ \pi^+_2(m)$. The projection of $m$ onto the closest path along $\gamma^-$ is denoted by $\pi^-_1(m)$ and onto the second path it is denoted by $\pi^-_2(m)$.  
 
 \item Given $m\in \overline{\calZ}$ we define four projections for $m$ onto the four boundary edges of $\overline{\calZ}$.  We denote the projections onto the paths starting or ending at $1_a$ by $\pi_p^\uparrow(m), \pi_p^\downarrow(m)$ and the projections onto the paths starting or ending at $2_a$ by $\pi_s^\uparrow(m), \pi_s^\downarrow(m)$ respectively. We choose the upwards arrow to refer to the paths ending/starting near $1_b$ or $1_c$ and the downwards arrow to refer to paths ending/starting near $1_d$ or $1_e$.
 
\end{enumerate}  
\end{defi}

See the projections in Figure~\ref{fig:regions}.

\begin{theorem} \cite[Theorem 6.12]{baur2018mutation} \label{thmA}
Let $m\in \frakF_{\T}$, and $m'\in \frakF_{\T'}$ be the corresponding integer after mutation at vertex $a$.  Then $\delta_a (m):= m-m'$ is computed as follows.

\begin{itemize}\setlength\itemsep{4pt}
\item If $m \in \calX$ then $\delta_a (m) = [\pi_1^+(m) - \pi_2^+(m)] [\pi_1^-(m) - \pi_2^-(m)]$.
\item If $m\in \calY$ then $\delta_a (m) = -[\pi_2^+(m) - 2 \pi_1^+(m)] [\pi_2^-(m)- 2 \pi_1^-(m)]$.
\item If $m \in \overline{\calZ}$ then $\delta_a (m) = \pi_s^\downarrow (m) \pi_p^\downarrow(m) + \pi_s^\uparrow(m) \pi_p^\uparrow(m) - 3 \pi_p^\downarrow(m) \pi_p^\uparrow(m)$. 
\item If $m\in\calF$ then $\delta_a(m)=0$, i.e. $m$ does not change.
\end{itemize}
\end{theorem}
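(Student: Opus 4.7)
The plan is to prove Theorem~\ref{thmA} directly from the snake-graph model for Laurent expansions, using the Ptolemy/flip identity $x_a x_{a'} = x_b x_d + x_c x_e$ attached to the quadrilateral in Figure~\ref{type-A} (with $x_e = 1$ if $e$ lies on the boundary, and similarly for the other sides). For an arc $\gamma$ with frieze entry $m$ in $\frakF_{\T}$ and entry $m'$ in $\frakF_{\T'}$, the difference $\delta_a(m) = m - m'$ is obtained by comparing two evaluations at $x_i = 1$ of the same rational function in $\mathbb{Q}(x_1, \dots, x_n)$, written first in terms of the cluster $\T$ and then in terms of $\T'$. This reduces the problem to a combinatorial accounting of perfect matchings of the snake graph $G_{\T,\gamma}$ whose tiles are labeled by $a$.

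First, I would classify $\gamma$ geometrically relative to the quadrilateral of $a$. In region $\calF$, $\gamma$ has no interaction with $a$, so $G_{\T,\gamma} = G_{\T',\gamma}$ and $\delta_a(m) = 0$ is immediate. In regions $\calX$ and $\calY$, $\gamma$ crosses $a$ exactly once; applying the skein relation of \cite{MSW} at that crossing splits the matching count of $G_{\T,\gamma}$ into a product of counts of two sub-snake-graphs attached above and below the $a$-tile. After specializing to $x_i = 1$, these counts are precisely the projections $\pi_1^\pm(m), \pi_2^\pm(m)$ of Definition~\ref{projections-A}, because sectional paths in the frieze record matching numbers of snake graphs obtained by truncating $G_{\T,\gamma}$ at a fixed arc (via Remark~\ref{sec_paths}, identifying sectional paths with counterclockwise one-endpoint moves). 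The product expressions $[\pi_1^+ - \pi_2^+][\pi_1^- - \pi_2^-]$ in $\calX$ and $-[\pi_2^+ - 2\pi_1^+][\pi_2^- - 2\pi_1^-]$ in $\calY$ then arise according to whether $\gamma$'s endpoints lie on the same or opposite sides of $a$; the factor of $2$ in $\calY$ reflects a boundary collapse of one of $b,c,d,e$ into a trivial row of~1's.

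The main obstacle will be region $\overline{\calZ}$, where $\gamma$ crosses $a$ more than once or shares an endpoint with $a$, producing a three-term formula $\pi_s^\downarrow \pi_p^\downarrow + \pi_s^\uparrow \pi_p^\uparrow - 3 \pi_p^\downarrow \pi_p^\uparrow$. Here I would apply the skein relation twice and carefully track the inclusion-exclusion of matchings counted in both the $\pi_p^\uparrow/\pi_s^\uparrow$ and the $\pi_p^\downarrow/\pi_s^\downarrow$ pairings; the coefficient $3$ records the overcount of matchings using both $a$-tiles simultaneously. To verify the coefficient, I would test against the two base cases singled out by Remark~\ref{frieze_entries}: when $\gamma = a \in \T$ so that $m = 1_a$, and when $\gamma$ crosses only $a$ so that $m = 2_a$; in both situations the formula must recover the Ptolemy identity $1_a \cdot 2_a = 1_b \cdot 1_d + 1_c \cdot 1_e$ after evaluation. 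Finally, because the four regions $\calX, \calY, \overline{\calZ}, \calF$ exhaust $\frakF_{\T}$ by Definition~\ref{def_regionsA}, the case analysis completes the proof.
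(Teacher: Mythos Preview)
This theorem is not proved in the present paper; it is quoted from \cite[Theorem~6.12]{baur2018mutation} and used as an input to Theorem~\ref{main-theorem}. For comparison, the argument in \cite{baur2018mutation} does not go through snake graphs or skein relations; as the introduction here indicates, it uses the cluster category and the representation theory of cluster-tilted algebras, computing $\mathrm{Hom}$-supports and middle terms of extensions in the Auslander--Reiten quiver.

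Your outline rests on an incorrect geometric reading of the regions. In a polygon any two arcs meet at most once, so your description of $\overline{\calZ}$ as the locus where $\gamma$ ``crosses $a$ more than once'' cannot occur; there are never two $a$-tiles in $G_{\T,\gamma}$ on which to run inclusion--exclusion, and the coefficient $3$ cannot arise that way. The actual trichotomy is governed by the pair of diagonals $(a,a')$: arcs in $\calF$ cross neither, arcs in $\calX$ cross $a$ but not $a'$, arcs in $\calY$ cross $a'$ but not $a$, and arcs in the interior of $\overline{\calZ}$ cross both. In particular your treatment of $\calY$ breaks down, since those arcs have no $a$-tile at which to apply a skein move, and the factor $2$ does not come from a boundary collapse. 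A direct combinatorial proof along Ptolemy/skein lines may well exist, but it needs this corrected case split and a separate argument explaining why a single crossing with each of $a$ and $a'$ produces the three-term $\overline{\calZ}$ formula.
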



\section{From type $D$ to type $A$} \label{from type D}

In this section, we develop a construction that allows us to convert a type $D$ frieze into a type $A$ and avoid various complications that arise from the particular structure of type $D$ friezes. Moreover, we show that this transformation behaves well under mutations.  

\subsection{Cutting an arc at the puncture}\label{section-cut} Let $\bf{T}$ be a triangulation of a once-punctured disk $\bf{S}$ with $n$ marked points on the boundary.   Next, we describe a new surface obtained from $\bf{S}$ by cutting along an arc going to the puncture $p$.  Note that there is always at least one arc in $\T$ attached to the puncture.

\begin{defi}\label{def:cut}
Let $({\bf S}, \T)$ be a triangulated disk with $n$ marked points on the boundary and one puncture $p$.  Let $i \in \bf{T}$ be an arc that is attached to the puncture.   The surface $({\bf S}_i, {\T}_i)$ is a triangulated polygon defined by cutting $\bf{S}$ along $i$ as in Figure~\ref{cut-surface}.  The polygon ${\bf S}_i$ has the same boundary segments as $\bf{S}$ and two additional ones $i_1, i_2$, where $i_2$ appears clockwise from $i_1$ and they meet at $p$. 
\end{defi}

Next we define a new triangulated surface $(\tilde{\bf S}_i, \tilde{\bf T})$.

\begin{figure}[h!]
\centering
\scalebox{.88}{\def\svgwidth{6in}
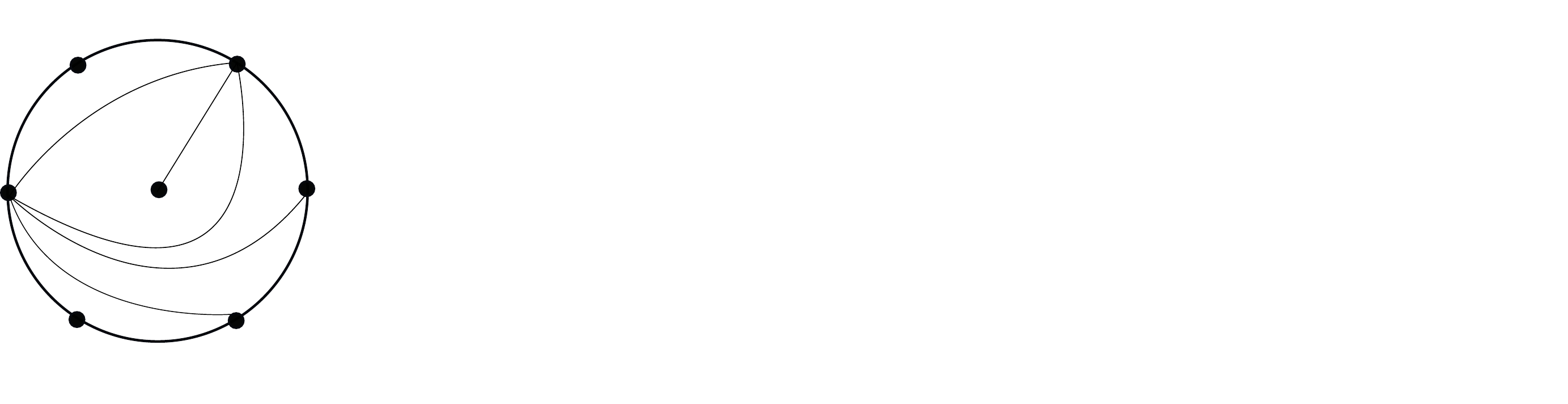}
\caption{Triangulation of the disk $\tilde{\bf S}_i$ obtained after cutting the triangulated punctured disk at $i$.}
\label{cut-surface}
\end{figure}

\begin{defi}\label{def:surface}
Let $({\bf S}_i^1, {\bf T}_i^1), ({\bf S}_i^2, {\bf T}_i^2)$ be two copies of the triangulated surface ${\bf S}_i$ as in Definition~\ref{def:cut}, where the first one is obtained from ${\Sur}_i$ after rotating $i_2$ clockwise until we get a half disk, while the second one is obtained after rotating $i_1$ counterclockwise. We define $(\tilde{\bf S}_i, \tilde{{\bf T}})$ as the triangulated  surface obtained after gluing along the boundary segment $i_1$ in ${\bf S}_i^1$ the corresponding boundary segment $i_2$ in ${\bf S}_i^2$ (see Figure \ref{cut-surface}) in such way that the endpoint $p$ in $i_1$ is glued to the endpoint $p$ in $i_2$.  
\end{defi}

Observe that the surface $\tilde{\bf S}_i$ is (topologically) an unpunctured disk with $2n+2$ marked points on the boundary.  The unglued arcs $i_1,i_2$ become boundary segments in $\tilde{\bf S}_i$. The triangulation $\tilde{\bf T}$ has $2n-1$ arcs that are inherited from the triangulations ${\T}_i^1, {\T}_i^2$ of  ${\bf S}_i^1, {\bf S}_i^2$ respectively.  Given an arc $\gamma$ in ${\T}_i^1\setminus\{i\}$ or ${\T}_i^2\setminus\{i\}$ we label the corresponding arc in $\tilde{\bf T}$ by $\hat{\gamma}, \gamma$ respectively.  Let $i$ denote the remaining arc of $\tilde{\bf T}$ that arises as a diameter of the resulting disk in the construction. We always draw $\tilde{\bf S}_i$ such that $p$ appears in the bottom,  see Figure \ref{cut-surface} (right).

Note that if $i$ is a radius of a self-folded triangle in $({\bf S}, {\bf T})$, with the associated loop $\gamma(i)$, then after cutting ${\bf S}$ along $i$ the self-folded triangle becomes a triangle in ${\bf S}_i$ with boundary edges $i_1, i_2$ and the edge $\gamma(i)$. Then by construction of $(\tilde{\bf S}_i, \tilde{{\bf T}})$, we obtain two triangles in $\tilde{\bf S}_i$ symmetric with respect to $i$ with edges $i, \gamma(i), i_2$ and $i, \hat{\gamma(i)}, i_1$ where $i_1, i_2$ are the unglued boundary segments.


\subsection{Glued type $D$ pattern} Next, we show how a frieze of type $D_n$ coming from a triangulation $\bf{T}$ of the punctured disk can be embedded into a frieze of type $A_{2n-1}$ coming from $\tilde{\bf T}$.  Recall that a frieze of type $D_n$ has the same structure as type $A_n$ except for the last two rows, see Definitions \ref{def:A} and \ref{def:D}. 

\begin{defi}\label{def:gluedfrieze}
Given a frieze $\frakF$ of type $D_n$ we construct a \emph{glued type $D$ pattern} $g(\frakF)$ as follows.  Take the first $n$ rows of $\frakF$ (i.e. $n-2$ nontrivial rows) and replace the remaining last two nontrivial rows with a single row, such that the entries of the form  $\begin{smallmatrix}a_{n-1,j}\\a_{n,j}\end{smallmatrix}$  become a single entry in $g(\frakF)$ by taking the product $a_{n,j} a_{n-1,j}$.
\end{defi}

Note that $g(\frakF)$ has the same underlying shape as a frieze of type $A$, and the diamond rule still holds except in the lower rows.  Recall that $g(\frakF)$ does not end with a row of 1's, therefore in general \[(a_{n,j} a_{n-1,j})(a_{n-1, j+1}a_{n,j+1})-a_{n-2, j+1}\not=1.\]

Let $\frakF$ be a frieze of type $D_n$ coming from a triangulation ${\T}$.   By Remark~\ref{frieze_entries}(1), an arc $i\in \T$ attached to the puncture corresponds to the entry $1_i$ in the frieze $\frakF$.  Moreover, the description of the arc category in type $D$ given in Section~\ref{arc_category} implies that $1_i$ lies in one of the bottom two rows of the frieze.  This observation allows us to recover the frieze $\frakF$ from the glued pattern $g(\frakF)$ as follows. 

\begin{defi}\label{def:unglue}
Let $a_{n,j} a_{n-1,j}$ be an entry in the last row of the glued type $D$ pattern $g(\frakF)$ such that one of the factors equals 1.  Choose one of these factors, say $a_{n,j}$, and set it equal to 1.  Then the local configuration in the last three rows of $\frakF$ is
\[\begin{smallmatrix}a_{n-2, j-1}&&a_{n-2,j}\\ &a_{n-1,j-1}&& a_{n-1,j}\\ &a_{n,j-1}&& 1=a_{n,j}\end{smallmatrix}\]
where we consider indices modulo $n$.   The frieze relations in Definition~\ref{def:D} imply that $a_{n,j-1}= 1 + a_{n-2,j}$ and $a_{n-1,j-1}=(1+a_{n-2,j})/a_{n-1,j}$.   Now, given the entries $a_{n,j-1}, a_{n-1, j-1}$ and $a_{n-2, j-1}$ we can solve for the next two entries in $\frakF$ to the left of $a_{n,j-1}, a_{n-1, j-1}$ in the same way.  Continuing this procedure we recover the entire frieze $\frakF$ (up to interchanging the last two rows) from the glued pattern $g(\frakF)$.  We call this process \emph{ungluing}. 
 \end{defi}

The reader can see an example of type $D$ frieze in Figure \ref{frieze-ex} (left) and the corresponding glued pattern in Figure \ref{final-example}.

\begin{defi}\label{def:bcd}
Let ${\T}$ be a triangulation of a once-punctured disk with $n$ marked points on the boundary and puncture $p$.   Let $i\in \T$ be an arc attached to the puncture.   
Let $d$ and $a_i$ denote the number of triangles in ${\T}$ incident with the two endpoints of $i$, where $d$ denotes the number of triangles incident with $p$ and $a_1$ denotes the number of triangles incident with the endpoint of $i$ that lies on the boundary.
Also, consider the corresponding triangulated surface $({\bf S}_i, {\T}_i)$ with boundary arcs $i_1, i_2$ (see Definition~\ref{def:cut}).  Let $b,c$ denote the number of triangles in ${\T}_i$ with vertices being the endpoints of $i_1,i_2$ respectively that are not attached to $p$ (see Figure~\ref{cut-surface-quid}).
\end{defi}

\begin{defi}\label{def:friezeT}
Let $(\tilde{\Sur}_i, {\tT})$ be the triangulated surface given in Definition~\ref{def:surface}.  Then we define $\frakF_{\tT}$ to be the frieze of type $A_{2n-1}$ coming from the triangulation $\tT$ of the disk $\tilde{\Sur}_i$.
\end{defi}

The next proposition relates the frieze $\frakF_{\T}$ of type $D_n$ with the corresponding frieze $\frakF_{\tT}$ of type $A_{2n-1}$.

\begin{prop}\label{prop-glue}
Let $\bf{T}$ be a triangulation of a once-punctured disk $\Sur$ and $\tT$ the associated triangulation on the disk $\tilde{\Sur}_i$.  Then the glued type $D_n$ pattern $g(\frakF_{\T})$ appears as a connected region in the type $A_{2n-1}$ frieze $\frakF_{\tT}$.
\end{prop}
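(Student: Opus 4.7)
The plan is to construct an embedding of the glued pattern $g(\frakF_{\T})$ into $\frakF_{\tT}$ via the arc--frieze correspondence from Section~\ref{arc_category}, and verify two things: that the lifted entries occupy a connected region of the correct combinatorial shape, and that the numerical values agree (including the nontrivial merging in the bottom row).

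First, I would set up an arc-to-arc lifting. Each arc of $\Sur$ that appears in $\frakF_{\T}$ is sent to an arc of $\tilde{\Sur}_i$ as follows: an arc not touching the puncture lifts canonically to the arc in the ``hatted'' copy $\tilde{\bf S}_i^1$; an arc attached to the puncture lifts to the arc of $\tilde{\Sur}_i$ ending at the central vertex $p$; and a loop around the puncture lifts to an arc crossing $i$ that joins a boundary vertex $j$ to its mirror $\hat{j}$. Using the description of how positions in an $A$-type frieze are determined by counterclockwise endpoint moves (Section~\ref{arc_category}), one checks that these lifted arcs occupy a connected region in $\frakF_{\tT}$ whose shape matches $g(\frakF_{\T})$: the top $n-2$ nontrivial rows replicate verbatim, and the glued bottom row of $g(\frakF_{\T})$ is in bijection with a single row of $\frakF_{\tT}$ whose arcs cross the diameter $i$ or end at $p$.

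Second, I would verify that the numerical entries agree. For positions in the top $n-2$ rows, this is immediate from the Laurent expansion formula of Section~\ref{CCmap}: the corresponding arcs do not touch the puncture, so their crossing patterns with $\T$ and with $\tT$ coincide, and the resulting snake graphs (hence perfect matching counts) are identical. For a position in the bottom row of $g(\frakF_{\T})$, the claim is that the product $a_{n-1,j}\,a_{n,j}$ equals the single entry of $\frakF_{\tT}$ assigned to the corresponding lifted arc crossing $i$. I would establish this either by a direct bijection of perfect matchings, identifying matchings of the unfolded snake graph with ordered pairs of matchings of the two smaller snake graphs associated with the plain and notched versions of the punctured arc, or alternatively by invoking the Ptolemy/skein relations in the cluster algebra together with the cluster character evaluation $x_i=1$.

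The main obstacle is precisely this product identity in the bottom row: combining two type~$D$ rows into a single type~$A$ entry by multiplication is not a formal consequence of the definitions, but reflects a genuine relationship between tagged arcs at the puncture and unfolded arcs across the cut. The snake graph bijection seems to be the cleanest route, as it localises the argument around the puncture and avoids a global cluster-algebraic computation; however one could instead verify it case by case (arc ending at the puncture versus loop encircling the puncture) by matching the Laurent formula term by term on the two sides. Once this numerical identity is settled, the connectedness of the embedded region and its compatibility with the diamond rule on the upper portion follow automatically from the preservation of the arc-category structure under lifting, and the proposition is complete.
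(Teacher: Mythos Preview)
Your approach is valid in principle but takes a considerably harder route than the paper. The paper's proof is a short quiddity-sequence argument: it computes the quiddity sequence of $\tilde{\bf T}$ explicitly as
\[
q=(2d,\,b,\,a_2,\dots,a_n,\,a_1,\,a_2,\dots,a_n,\,c),
\]
observes that the subsequence $q'=(a_2,\dots,a_n,a_1,a_2,\dots,a_n)$ contains two full periods of the type~$D$ quiddity sequence, and then concludes. The point is that both $g(\frakF_{\T})$ and the corresponding triangular region of $\frakF_{\tT}$ are determined from their first nontrivial row by the diamond rule alone; since these first rows agree, every row below agrees, and the embedding follows. No arc lifting, snake graphs, or Ptolemy identities are needed.

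What this buys over your approach is precisely the elimination of the ``main obstacle'' you single out. The product identity $a_{n-1,j}\,a_{n,j}=\text{(single type $A$ entry)}$ in the bottom row is not proved directly in the paper; it falls out for free. The reason is that the type~$D$ relation $bc-ade=1$ of Definition~\ref{def:D} says exactly that the glued row $a_{n-1,j}a_{n,j}$ satisfies the ordinary diamond rule with the row above it (this is the remark immediately following Definition~\ref{def:gluedfrieze}). So $g(\frakF_{\T})$ is determined by its first row via diamonds, just like any slice of a type~$A$ frieze, and matching first rows is enough. Your snake-graph bijection would independently establish the product identity, which is interesting in its own right and closer in spirit to the cluster-variable comparisons of \cite{Ma}, but it is not needed for this proposition.
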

 
\begin{proof}
The entries in a glued type $D_n$ pattern are completely determined by the quiddity sequence coming from $\T$, and similarly the entries in $\frakF_{\tilde{\T}}$ are determined by the sequence coming from $\tT$.  To prove the proposition, we find a precise relationship between the two sequences.  Recall that the entries in a quiddity sequence equal the number of triangles in a triangulation incident to each marked point on the boundary. 

Let $a_1, d, b,c$ be the integers given in the Definition~\ref{def:bcd}.  By Definition~\ref{def:cut} the puncture $p$ becomes a boundary point in ${\bf S}_i$ and its corresponding entry in the quiddity sequence equals $d$.   For example, see Figure~\ref{cut-surface-quid}.  Thus, given a quiddity sequence $(a_1, a_2, \dots, a_n)$ for $\Sur$ we obtain the quiddity sequence $(b, a_2, \dots, a_n, c, d)$  for ${\bf S}_i$.  By construction in Definition~\ref{def:surface}, the polygon $\tilde{\bf S}_i$ is obtained by gluing two copies of ${\bf S}_i$ at $i_1$ and $i_2$.  The resulting quiddity sequence for $\tilde{\bf S}_i$ has length $2n+2$ and can be written as follows. 

$$q=(2d, b, a_2, \dots, a_n, a_1, a_2, \dots, a_n, c)$$
\begin{figure}[h!]
\centering
\scalebox{.85}{\def\svgwidth{5.6in}
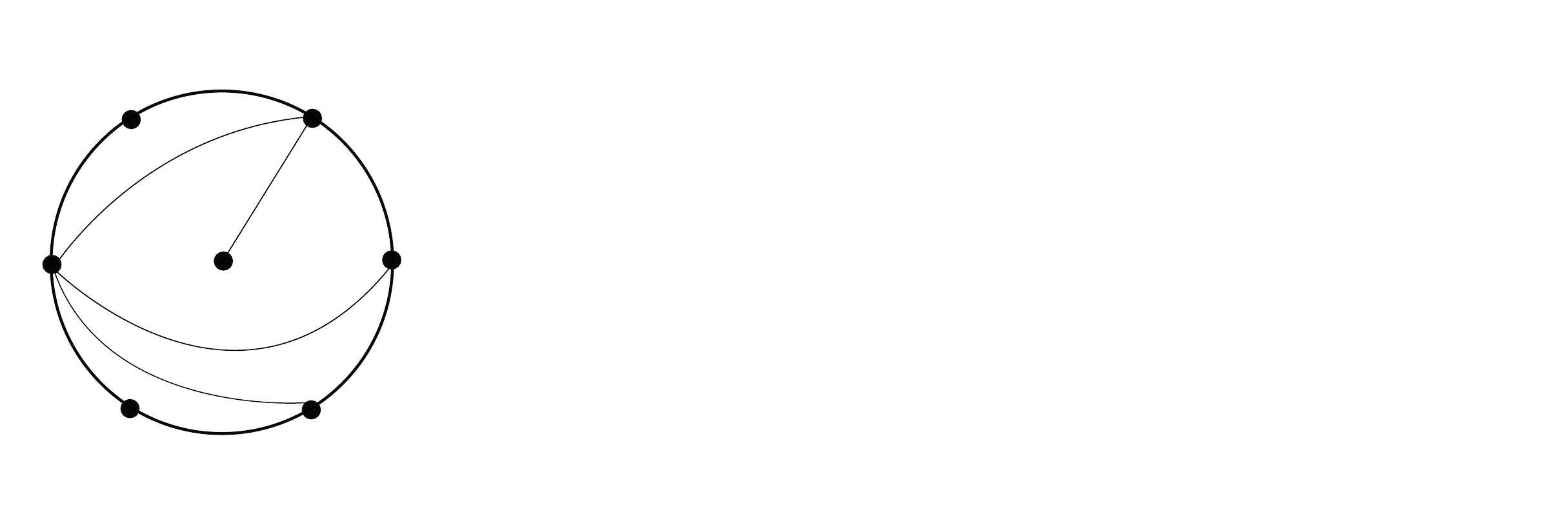}
\caption{The quiddity sequence $(6,2,1,5,1,2,3,3,1,5,1,2,3,1)$ of type $A$ coming from the sequence $(3,1,5,1,2,3)$ of type $D$. } 
\label{cut-surface-quid}
\end{figure}

In the frieze $\frakF_{\tT}$, the entries in the first $n-1$ nontrivial rows determined by the subsequence 
$$q'=(a_2, \dots, a_n, a_1, a_2, \dots, a_n)$$ 
of $q$ contains $g(\frakF_{\T})$.  See the diagram below. 

$$\xymatrix@C=0pt@R=0pt{b \,\,\,\ar@{-}[dddrrr] &a_2 &&\dots && a_n & \ar@{--}[dddll] \,\,\, a_1& a_2 && \dots && a_n & \ar@{-}[dddll]\,\,\, \, c& 2d\\
& \\
&&&&&&&&g(\frakF_{\T})\\
&&&\ar@{-}[rrrrrrr] &&&&&& &}$$

Indeed, this glued type $D$ pattern has the same quiddity sequence, and since $q'$ has length $2n-1$ the $(n-1)$-st nontrivial row of $\frakF_{\tT}$ determined by this subsequence has $n+1$ entries.  The period of $g(\frakF_{\T})$ is $n$, therefore it is contained in this region as claimed.    
\end{proof}

 

Next, we show that this construction behaves well with mutations.

\begin{defi}\label{def:mutfrieze}
Let $\T$ be a triangulation of a disk (or a once-punctured disk) ${\bf S}$ with the associated type $A_n$ (or a type $D_n$) frieze $\frakF_{\T}$.   For an arc $a$ in $\T$, let $\mu_a({\T})$ denote the new triangulation of ${\bf S}$ obtained by mutating $\T$ at $a$ (see Definition~\ref{def:mut}).  Define the {\it mutation of $\frakF_{\T}$ at $a$} to be the new frieze $\mu_a(\frakF_{\T}):=\frakF_{\mu_a ({\T})}$ of type $A_n$ (or $D_n$) coming from the triangulation $\mu_a({\T})$.
\end{defi}

Let $\T$ be a triangulation of the once-punctured disk ${\bf S}$ with an arc $i\in \T$ attached to the puncture.  Let $a$ be any other arc of $\T$ different from $i$.  Consider the associated disk $\tilde{\bf S}_i$ with triangulation $\tT$.  By Definition~\ref{def:surface} the triangulation $\tT$ contains two arcs $a,\hat{a}$ coming from the arc $a\in \T$ with the same label.  By Proposition~\ref{prop-glue} the glued pattern $g(\frakF_{\T})$ appears as a connected region in $\frakF_{\tT}$, and   
let $\mu_a\mu_{\hat{a}}(g(\frakF_{\T}))$ denote the sub-pattern of $\mu_a \mu_{\hat{a}} (\frakF_{\tT})$ occupying the position of   $g(\frakF_{\T})$ after the mutations at $a$ and $\hat{a}$.

\begin{theorem}\label{mut-thm}
Let $\bf{T}$ be a triangulation of a once-punctured disk $\bf{S}$ and let $\tilde{\bf T}$ be the corresponding triangulation of a polygon $\tilde{\bf S}_i$ for some $i$.  Then for all arcs $a\not= i$ in $\bf{T}$ 
\begin{itemize}
\item[(a)] $\widetilde{\mu_a({\bf T})} = \mu_a\mu_{\hat{a}}(\tT)$
\item[(b)] $\mu_a\mu_{\hat{a}}(g(\frakF_{\T})) = g(\frakF_{\mu_a({\T})})$.
\end{itemize}
In particular, $\mu_a (\frakF_{\T})$ is obtained by ungluing  the sub-pattern $g(\frakF_{\mu_a(\T)})$ contained in $\mu_a \mu_{\hat{a}} (\frakF_{\tT})$.
\end{theorem}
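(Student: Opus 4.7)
The plan is to first establish~(a) by a local topological argument about flips and the cut-and-glue construction, then to deduce~(b) from~(a) together with Proposition~\ref{prop-glue} and Definition~\ref{def:mutfrieze}. The final ``in particular'' claim will follow immediately by ungluing.

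For~(a), recall that $\mu_a$ is the flip of the diagonal $a$ in the unique quadrilateral $Q_a$ formed by the two triangles of $\T$ adjacent to $a$, while Definition~\ref{def:surface} gives a purely topological cut-and-glue construction. First I would record the obvious naturality statement that any arc $\beta\in\T\setminus\{i\}$ does not cross $i$ and therefore lifts to two disjoint arcs $\beta,\hat\beta$ in $\tT$, one sitting inside each copy ${\bf S}_i^1,{\bf S}_i^2$. Applying this to the four sides of $Q_a$, the quadrilateral itself lifts to two symmetric quadrilaterals $Q_a$ and $Q_{\hat a}$ in $\tilde{\bf S}_i$ whose interiors are disjoint; they can share only boundary pieces lying on the diameter $i$ or on the boundary segments $i_1,i_2$. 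Consequently $\mu_a$ and $\mu_{\hat a}$ commute in $\tT$, and their composite flips exactly these two diagonals. On the other side, since $a\neq i$ we have $a'\neq i$ as well, so cutting $\mu_a(\T)$ produces two lifted diagonals $a',\hat{a}'$ sitting precisely in the quadrilaterals $Q_a,Q_{\hat a}$; every other arc is left alone. This yields $\widetilde{\mu_a(\T)}=\mu_a\mu_{\hat a}(\tT)$.

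Part~(b) is then a bookkeeping consequence of~(a). Definition~\ref{def:mutfrieze} together with~(a) gives
\[
\mu_a\mu_{\hat a}(\frakF_{\tT}) \;=\; \frakF_{\mu_a\mu_{\hat a}(\tT)} \;=\; \frakF_{\widetilde{\mu_a(\T)}}.
\]
Applying Proposition~\ref{prop-glue} to the triangulation $\mu_a(\T)$ using the same arc $i$ (which still belongs to $\mu_a(\T)$ and is still attached to the puncture, because $a\neq i$) shows that $g(\frakF_{\mu_a(\T)})$ embeds as a connected region in $\frakF_{\widetilde{\mu_a(\T)}}$. The location of this region inside the ambient $A_{2n-1}$ frieze is determined only by $i$ and by the number of boundary marked points, both of which are preserved by $\mu_a$, as is clear from the schematic diagram at the end of the proof of Proposition~\ref{prop-glue}. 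Hence $g(\frakF_{\mu_a(\T)})$ and $g(\frakF_{\T})$ occupy the same position in their respective $A$-type friezes, and the former is therefore precisely the sub-pattern obtained from the latter after mutating at $a$ and $\hat a$, namely $\mu_a\mu_{\hat a}(g(\frakF_{\T}))$.

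The final statement then follows because the ungluing procedure of Definition~\ref{def:unglue} is inverse to the gluing of Definition~\ref{def:gluedfrieze}: applied to $g(\frakF_{\mu_a(\T)})=\mu_a\mu_{\hat a}(g(\frakF_{\T}))$ it returns $\frakF_{\mu_a(\T)}=\mu_a(\frakF_{\T})$. The step I expect to require the most care is the geometric verification in~(a) when the quadrilateral $Q_a$ interacts with the cut, namely when $Q_a$ has an edge equal to $i$, a vertex at the puncture $p$, or when $a$ is the loop of a self-folded triangle whose radius is $i$; these reduce to a handful of local configurations that must be drawn and checked one by one to confirm that cutting genuinely commutes with the flip.
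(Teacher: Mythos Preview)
Your proof is correct and follows essentially the same strategy as the paper: part~(a) via the observation that $a$ and $\hat a$ live in separate copies of the cut surface so their flips do not interfere, and part~(b) by applying Proposition~\ref{prop-glue} (equivalently, comparing quiddity sequences) to $\mu_a(\T)$ with the same cut arc $i$.

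One remark: the case analysis you anticipate at the end is not actually needed. The paper sidesteps it by phrasing~(a) entirely in terms of the cut surface: once you cut along $i$ to obtain the polygon ${\bf S}_i$, the arc $a$ (including the case where $a$ is the loop around a self-folded triangle with radius $i$) becomes an ordinary diagonal of a polygon, and its flip is the usual one. Gluing two copies of the mutated polygon then gives $\widetilde{\mu_a(\T)}$ directly, with no interaction between the two copies to check. Your formulation in terms of lifting the quadrilateral $Q_a$ is equivalent but invites the very local configurations you list; reorganizing the argument to ``cut first, then flip in each copy'' makes those configurations disappear.
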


\begin{proof}
By construction in Definition~\ref{def:surface}, the polygon $\tilde{\bf S}_i$ is obtained by gluing two copies of $\bf{S}$ at $i$.  Since $i \not=a$, the mutations of $\tilde{\bf T}$ at $a$ and $\hat{a}$ occur in their appropriate copies of $\bf{S}$, and they do not interfere with each other.  In particular, we obtain the same triangulation of $\tilde{\bf S}_i$ by first mutating $\bf{T}$ at $a$ in the polygon $\bf{S}$ and then gluing two resulting copies together.  This shows part (a). 

To prove part (b), we identify $g(\frakF_{\bf T})$ with the region in $\frakF_{\tT}$ as in the proof of Proposition~\ref{prop-glue}.  In particular, $g(\frakF_{\T})$ lies in the portion of $\frakF_{\tT}$ determined by the quiddity subsequence $(a_2, \dots, a_n, a_1, \dots, a_n)$ coming from $\tT$.  The mutations $\mu_a, \mu_{\hat{a}}$ change the quiddity sequence to some $q''$ and yield $\mu_a\mu_{\hat{a}}(g(\frakF_{\T}))$.  

Moreover, $q''$ is uniquely determined by the triangulation $\mu_a\mu_{\hat{a}}(\tT)$.  By part (a) of the theorem we have $\mu_a\mu_{\hat{a}}(\tT)=\widetilde{\mu_a({\T})}$, which implies that $\mu_a\mu_{\hat{a}}(g(\frakF_{\T})) = g(\frakF_{\mu_a(\T)})$ as desired.  This shows part (b). 

Note that $1_i$, the entry 1 corresponding to the arc $i\in {\T}$ as in Remark~\ref{frieze_entries}(1), lies in one of the bottom two rows of $\frakF_{\T}$.  By Definition~\ref{def:mutfrieze} we have $\mu_a(\frakF_{\bf T})= \frakF_{\mu_a(\bf T)}$, and the final statement follows from part(b) and the Definition~\ref{def:unglue} of ungluing.  Observe that $1_i$ remains in the same position in both $\frakF_{\bf T}$ and $\mu_a(\frakF_{\bf T})$ enabling to unglue $g(\frakF_{\mu_a (\T)})$.
\end{proof}

The theorem allows us to study mutations of friezes of type $D$ as pairs of mutations in type $A$.  Moreover, the particular type $A$ friezes introduced in Definition \ref{def:friezeT} possess additional symmetries that we describe in the next section.  

\begin{remark} A similar notion of gluing appears in \cite{Ma} where the author studies relationships between cluster variables of types $A$ and $D$.  However, that construction relies on the notion of slices and it is done only on the level of Auslander--Reiten quivers.  For us it is important to understand this transformation in terms of surfaces to study symmetries in the resulting type $A$ frieze.  Our construction corresponds to taking a particular orientation of the slice $\overline{\omega}$ in \cite[Section 2.3]{Ma}.
 
On the other hand, Amiot--Plamondon \cite{AP} have used geometric moves resembling the ones in Figure~\ref{cut-surface-quid} to describe cluster categories arising form punctured Riemann surfaces in terms of cluster categories arising from unpunctured surfaces and together with a group action. Let us mention that their construction is different. For example, a punctured disk with $n$ marked points on the boundary would be transformed into a disk with $2n-3$ marked points on the boundary. Moreover, their construction applies only in the case when all punctures are enclosed by self-folded triangles.
\end{remark}


\section{Mutation of friezes of type $D$} \label{mut-D}

In this section we describe mutations of friezes of type $D$.  Instead of working with the frieze directly, we exploit the symmetries present in $\frakF_{\tT}$, and we define an auxiliary pattern $\frakG_{\T}$ (see Definition~\ref{def:patternG}) that behaves like a frieze of type $A$ under mutations. We will see that our glued type $D$ pattern $g(\frakF_{\T})$ can be thought as a full sub-pattern of $\frakG_{\T}$.

\subsection{Properties of $\frakF_{\tT}$ and the pattern $\frakG_{\T}$}

We now describe the important regions $\calA,\calB, \calC,\hat{\calA}$ of the glued pattern $g(\frakF_{\T})$ (Definition~\ref{def:gluedfrieze}) and the frieze $\frakF_{\tT}$ (Definition \ref{def:friezeT}), see Figure~\ref{AR-quiver-1}.  Also, recall the entries $a_1, b, c, d$ given in Definition~\ref{def:bcd}.  Then the glued pattern $g(\frakF_{\T})$ is the disjoint union of two triangular regions $\calA \cup \calB$ defined as follows. The top row of $\calA$  is a part of the quiddity subsequence $(a_2, \ldots, a_n)$ of type $D_n$.  The triangles $\calB$ and $\calA$  have $n-1$ rows. The triangle $\calB$ is inverted with respect to $\calA$, its top row is the remaining element $a_1$.  The frieze $\frakF_{\tT}$ is then completed by another copy of $\calA$, that we denote by $\hat\calA$, a triangular region $\calC$ with entry $2d$ in the top row, 
and a sectional path $\mathfrak{s}$ starting at $b=r_1$ and ending at $\hat r_{n-1}$.

\begin{figure}[h!]
\centering
\scalebox{.9}{\def\svgwidth{4.8in}
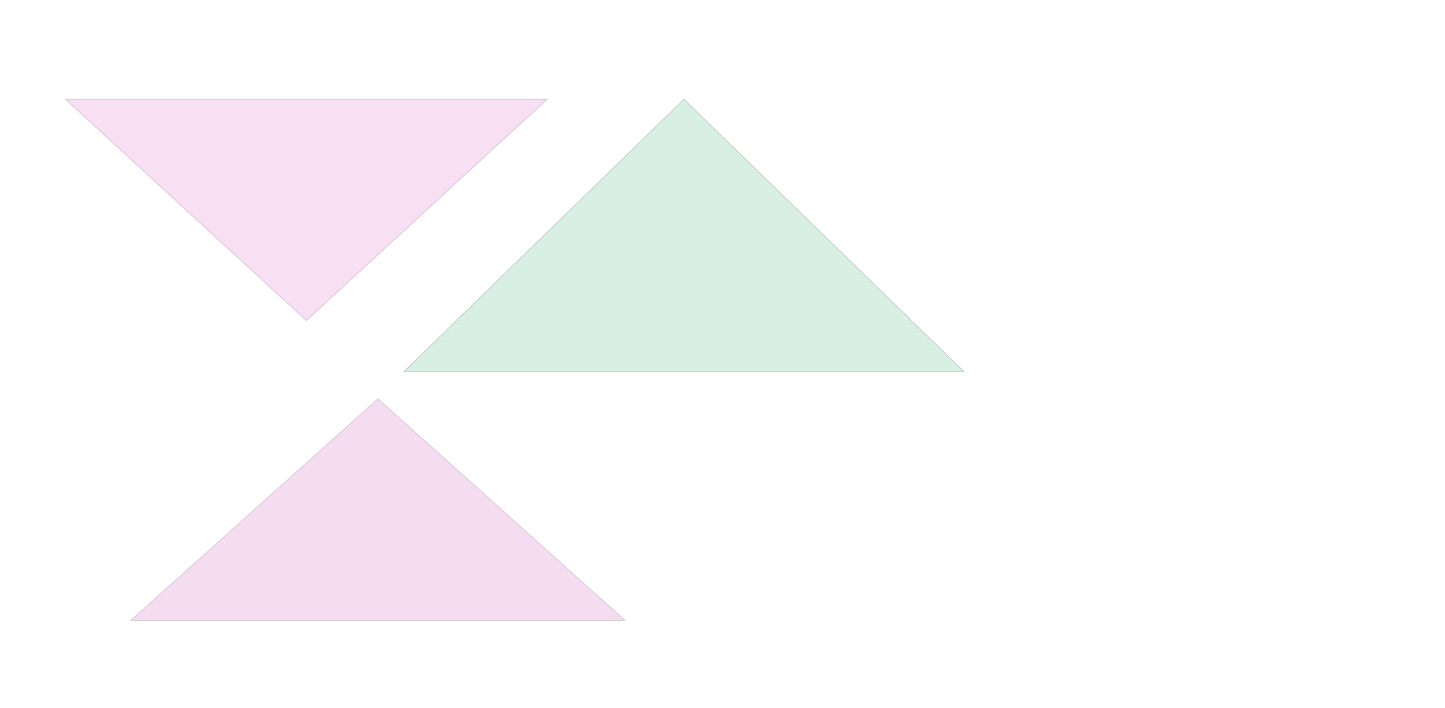}
\caption{Frieze $\frakF_{\tT}$ that contains the glued type $D$ pattern $\calA \cup \calB = g(\frakF_{\T})$. }
\label{AR-quiver-1}
\end{figure}

Consider Figure~\ref{R} (a) that depicts the triangulation $\tT$ of $\tilde{\bf S}_i$.
The sectional path $\mathfrak{s}$ corresponds to performing elementary moves at the point $p$ (see Definition~\ref{def:pivot} and Remark~\ref{sec_paths}), where $p$ comes from the puncture $p$ after the construction of $\tilde{\bf S}_i$. The first nontrivial entry in $\mathfrak{s}$ is $b=r_1$, associated to the arc $\sigma_1$, and all other nontrivial entries are obtained by moving the endpoint $u$ counterclockwise until $v$ is reached. This last integer is $\hat r_{n-1}=c$.   By Remark~\ref{frieze_entries}(1), the entry $1_i$ in the frieze $\frakF_{\tT}$ corresponds to the arc $i\in \tT$, which implies that the entry $1_i$ appears at the center of the sectional path $\mathfrak{s}$.

\begin{figure}[h!]
\centering
\scalebox{.82}{\def\svgwidth{6in}
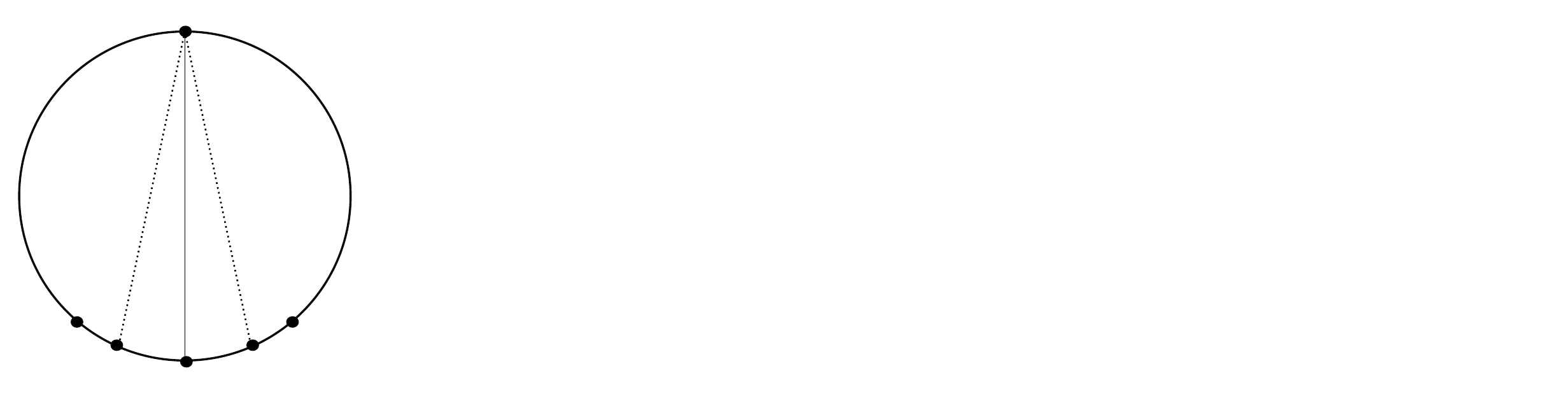}
\caption{(a) Arcs defining sectional path $\mathfrak{s}$; (b) arc $\gamma$; (c) Baur-Marsh definition of the integer associated to $\gamma$.}
\label{R}
\end{figure}

Recall from Section~\ref{CCmap} that an entry in a frieze $\frakF_{\T}$ associated to an arc $\gamma$ depends on the crossing pattern of $\gamma$ and arcs in the triangulation ${\T}$.   In \cite{BM} the authors provide a purely combinatorial description of the entries in the frieze. Different types of arcs in the surface have different combinatorial rules of how to calculate the associated entries in a frieze, for example there is a Laurent expansion formula as in Section \ref{CCmap}, see \cite[Section 5.5]{Sch18}.  Below we explain only one of these rules in the case when an arc is attached to the puncture.
Following \cite[Definition 2.14]{BM} the integer in $\frakF_{\T}$ corresponding to the particular arc $\gamma$, shown in Figure~\ref{R}(b), is given by the cardinality of a set denoted by $\mathcal{M}_{n-1,0}$. This set consists of \emph{matchings}, i.e. ways to allocate triangles in the triangulated polygon $P$ to the black marked points on $\partial P$.  Moreover, we can associate a triangle to a black point only if the point is a vertex of this triangle.  Recall the labeling of entries in $\frakF_{\T}$ given in Figure~\ref{friezeD}.

\begin{lemma}\label{lemma-b-and-c}
Let $1_i = a_{n,n}$ be the integer associated to the arc $i$ in a type $D_n$ frieze $\frakF_{\T}$. Then $b=a_{n-1,1}$, $c=a_{n-1,n-1}$ in the case $n$ is even, and $c=a_{n,n-1}$ in the case $n$ is odd.
\end{lemma}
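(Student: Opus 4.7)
The plan is to interpret $b$, $c$ and the candidate right-hand sides $a_{n-1,1}$, $a_{n-1,n-1}$, $a_{n,n-1}$ via the matching formula for type $D$ frieze entries from \cite[Definition 2.14]{BM} recalled just before the lemma, and then exhibit a bijection between the matching sets computing these frieze entries and the triangle counts defining $b$ and $c$ in Definition~\ref{def:bcd}.

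First I would identify the arcs of $\T$ corresponding to the frieze entries on the right-hand side. Using the arc-category correspondence in Section~\ref{arc_category} (and \cite[Sections 2.3 and 2.5]{S}) together with the normalization $1_i = a_{n,n}$, these turn out to be the two attached arcs $\gamma_{k+1}, \gamma_{k-1}$ from the puncture $p$ to the boundary vertices $v_{k+1}$ and $v_{k-1}$ neighbouring $u = v_k$, each drawn on the side of $i$ corresponding to the appropriate copy of $\Sur_i$ in Definition~\ref{def:surface}. Under the chosen indexing $v_k$ sits at column $n$, so $\gamma_{k+1}$ sits at column $1$ and $\gamma_{k-1}$ at column $n-1$. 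In both cases these arcs lie in the last two rows of $\frakF_{\T}$. Tracking the row by a single elementary move at the boundary endpoint of $i$, and using that such a move toggles between rows $n-1$ and $n$ via the tag flip at the puncture, places $\gamma_{k+1}$ at row $n-1$. For $\gamma_{k-1}$ the row is determined by the twist described after Definition~\ref{def:D}: for $n$ even the frieze has period $n$ and no twist, so $\gamma_{k-1}$ sits at row $n-1$; for $n$ odd the frieze has period $2n$ with the last two rows swapping after $n$ columns, which shifts the row of $\gamma_{k-1}$ from $n-1$ to $n$.

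Second, I would spell out the matching set $\mathcal{M}$ computing $a_{n-1,1}$ via the BM formula and exhibit a bijection $\mathcal{M} \leftrightarrow \{\text{triangles of } \T \text{ incident to } u \text{ on the } u_1\text{-side of } i, \text{ not containing } p\}$. By Definition~\ref{def:bcd} the latter set has cardinality $b$, proving $b = a_{n-1,1}$. The bijection comes directly from the snake-graph description of $\gamma_{k+1}$: each matching of the snake graph localises to a single triangle near $v_{k+1}$, which after cutting along $i$ is exactly a triangle at $u_1$ not attached to $p$. Running the analogous argument for $\gamma_{k-1}$ with $u_2$ in place of $u_1$ produces the identification of $c$ in both parity cases, finishing the proof.

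The principal obstacle will be the bijection in the second step: turning the abstract matching-set cardinality from the BM formula into the geometric count of Definition~\ref{def:bcd}. This requires a careful unpacking of the snake-graph construction for arcs incident to the puncture, together with the tag/side convention at $p$ that distinguishes rows $n-1$ and $n$ in the frieze; once those combinatorial conventions are fixed, both identities fall out in parallel.
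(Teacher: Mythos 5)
Your overall strategy---identify the arcs behind $a_{n-1,1}$ and $a_{n-1,n-1}$ (resp.\ $a_{n,n-1}$) as the radii at the two boundary vertices adjacent to the boundary endpoint of $i$, and then evaluate these frieze entries by the matching formula of \cite[Definition 2.14]{BM}---is the same route the paper takes, so the plan itself is sound. However, the one concrete identification you make is wrong: in Definition~\ref{def:bcd} the phrase ``that are not attached to $p$'' selects the \emph{endpoint} of $i_1$ (resp.\ $i_2$), not the triangles. That is, $b$ and $c$ count \emph{all} triangles of ${\T}_i$ incident to the non-puncture endpoint of $i_1$, resp.\ of $i_2$, including the triangle flanking the cut arc, which always has $p$ as a vertex. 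This reading is forced by the rest of the paper: in the proof of Proposition~\ref{prop-glue} the quiddity sequence of ${\bf S}_i$ is $(b,a_2,\dots,a_n,c,d)$, so $b+c=a_1$, and in Figure~\ref{cut-surface-quid} one has $a_1=3$, $b=2$, $c=1$ even though on each side one of the counted triangles contains $p$. Under your reading (``triangles at $u_1$ not containing $p$'') the counts would be $b-1$ and $c-1$; in that very example your $c$ would be $0$, impossible for a frieze entry. So the bijection you propose, even if completed, would prove $a_{n-1,1}=b-1$ rather than $b=a_{n-1,1}$.

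Second, the step you defer as the ``principal obstacle'' is the entire content of the lemma, and your sketch of it is mis-aimed. The matching count $\vert\mathcal{M}_{n-1,0}\vert$ does not ``localise to a single triangle near $v_{k\pm1}$''; in the paper's argument the assignments of triangles to black points are forced successively along the bottom of the unrolled polygon in Figure~\ref{R}(c), and the only remaining freedom is the choice of which triangle incident to the black point labelled $n$ (the boundary endpoint of the cut arc) is assigned to it. Hence the count equals the number of triangles at that vertex in $P$, which is exactly the quantity $c$ of Definition~\ref{def:bcd} (all of them, including the one through $p$), not a count of triangles at the adjacent vertex $v_{k-1}$ and not a count excluding $p$-triangles. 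To repair the proposal you need both to correct the reading of Definition~\ref{def:bcd} and to actually carry out this forcing argument on matchings (or replace it by a direct frieze/Ptolemy computation in the spirit of Lemma~\ref{lemma-R}); as written, the proposal does not establish the statement.
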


\begin{proof}
We consider the case when $n$ is even and find $a_{n-1,n-1}$, and the remaining cases follow similarly.  Let $\gamma$ be the arc in the punctured polygon corresponding to the entry $a_{n-1,n-1}$ in the frieze. 
Note that if $\gamma \in \T$ then $c=a_{n-1,n-1}=1$, as claimed.  Now, suppose that the arc $\gamma$ in Figure \ref{R} (b) is not in $\T$. Let $u(i)$ be the arc at the puncture in $\T$ immediately clockwise from $i$. Let $P$ be the (triangulated) gray polygon with $t$ marked points on $\partial P$ in Figure \ref{R} (b).  Both endpoints of $\gamma$ are marked white, the other points on $\partial P$ are marked black. Thus, $P$ consists of $t-2$ triangles and $t-2$ black points. 

We draw the triangulated polygon $P$, as in Figure \ref{R} (c). The triangles whose vertices are on the lower edge of the rectangle have only one possible associated black point. For example, the triangle with vertices $x,q,r$ can only be assignable to $q$, since $q$ has to be assigned to a triangle containing $q$. From this, the black point $r$ can be assigned only to the triangle with vertices $x,r,s$, because the other possible triangle with endpoints $x,q,r$ is no longer available. Continuing in this way, we conclude that $\vert \mathcal{M}_{n-1,0} \vert$ depends only on the arcs at the black point $n$.  After one of the triangles having $n$ as a vertex is assigned to $n$, the rest of the correspondences are uniquely determined. Hence, $a_{n-1,n-1}$ is obtained by counting the triangles at $n$, which coincides with the Definition~\ref{def:bcd} of $c$. 
This shows the statement of the lemma. 
\end{proof}

\begin{defi}\label{def:R}
Define two sectional paths $\calR$ and $\hat \calR$ in the frieze $\frakF_{\tT}$ as the following subpaths of 
$$\mathfrak{s}: r_1\to \dots \to r_{n-1}\to 1_i \to \hat r_1 \to \dots \to \hat r_{n-1}.$$  Let $\calR$ be the sectional path $r_1 \to \cdots \to  r_{n-1}$  starting in $r_1$ and ending in $r_{n-1}$, and similarly let $\hat{\calR}$ be the sectional path $\hat r_1 \to \cdots \to \hat r_{n-1}$ starting in $\hat r_1$ and ending in $\hat r_{n-1}$. See Figure~\ref{AR-quiver-1}. 
\end{defi}

In the next lemma we show that in fact $r_j = \hat r_{j}$ for all $j \in [1,n-1]$, and we explain that these values arise from the original type $D_n$ frieze $\frakF_{\T}$.  Recall the labeling of the entries in a frieze of type $D_n$ given in Figure~\ref{friezeD}.

\begin{lemma}\label{lemma-R}
Let $1_i = a_{n,n}$ be the integer associated to the cut arc $i$ in a type $D_n$ frieze $\frakF_{\T}$. Then $ \calR = \hat \calR= a_{n-1, 1}\to a_{n, 2}\to a_{n-1,3} \to \dots \to r_{n-1} $ where \[r_{n-1} = c = \left\{ \begin{array}{lcc}
             a_{n,n-1} &   \mathrm{if} \ n  \ \mathrm{odd} \\
             \\ a_{n-1,n-1} &  \mathrm{if} \ n  \ \mathrm{even} \\
             \end{array}
   \right.\]

and $a_{n-1,1} = b$, i.e. the integers come from the arcs attached to the puncture. 
\end{lemma}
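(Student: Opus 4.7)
The plan is to first use the arc-category description of $\frakF_{\tT}$ (Section~\ref{arc_category}) together with the symmetry of $\tT$ under the involution swapping the two copies of $\Sur_i$, and then to identify the entries on $\calR$ by induction combining the diamond rule in $\frakF_{\tT}$ with the type~$D$ frieze relations.

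By Remark~\ref{sec_paths} and the choice of $\mathfrak{s}$ described just before the lemma, the entries along $\mathfrak{s}$ correspond to arcs of $\tT$ with one endpoint at $p$, obtained by pivoting the other endpoint counterclockwise from $u$ to $v$. Via the inverse of the cut-and-double construction (Definitions~\ref{def:cut} and~\ref{def:surface}), each such arc comes from a radius of $\T$ at the puncture: every radius other than $i$ lifts to exactly two arcs of $\tT$ (one in each copy of $\Sur_i$), while $i$ itself becomes the diameter and accounts for the middle entry $1_i$. The involution $\iota\colon \tilde{\Sur}_i\to\tilde{\Sur}_i$ swapping the two copies is a symmetry of $\tT$, so it induces equality of frieze values at $\iota$-related positions; after choosing compatible labelings in both copies this gives $r_j=\hat{r}_j$ for all $j$, establishing $\calR=\hat{\calR}$.

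For the explicit values, I proceed by induction on $j$. The base cases $r_1=b=a_{n-1,1}$ and $r_{n-1}=c$ are supplied by Lemma~\ref{lemma-b-and-c}. For the inductive step, $r_j$ lies in a diamond of $\frakF_{\tT}$ whose other three corners sit in the regions $\calA$ or $\calB$ of Figure~\ref{AR-quiver-1}. By Proposition~\ref{prop-glue}, those corners either coincide with entries $a_{k,\ell}$ in the first $n-2$ nontrivial rows of $\frakF_\T$ or are products $a_{n-1,\ell}\,a_{n,\ell}$ from the bottom row of $g(\frakF_\T)$. Combining the diamond rule $bc-ad=1$ in $\frakF_{\tT}$ with the type~$D$ relations of Definition~\ref{def:D} and the inductive hypothesis on $r_{j-1}$, I solve for $r_j$ and check that it equals $a_{n-1,j}$ when $j$ is odd and $a_{n,j}$ when $j$ is even. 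The case $j=n-1$ then recovers the parity-dependent expression for $c$ in Lemma~\ref{lemma-b-and-c}.

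The hardest part is the inductive identification. The relations $bc-a=1$ and $bc-ade=1$ governing the bottom two rows of a type $D_n$ frieze are more delicate than the diamond rule, and one must carefully track which of $a_{n-1,\ell}$ or $a_{n,\ell}$ is selected at each step --- this is dictated by the counterclockwise pivoting direction around $p$ and by Remark~\ref{frieze_entries}(1), which pins down the row of $1_i$. The twist that occurs in $\frakF_\T$ for odd $n$ must also be handled separately when concluding the formula for $r_{n-1}$.
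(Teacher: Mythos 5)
Your first step does not hold as stated: there is in general no involution of $(\tilde{\Sur}_i,\tT)$ swapping the two copies of $\Sur_i$. A self-homeomorphism of the disk that exchanges the two half-disks and preserves the marked points is either the reflection across the diameter $i$ or the half-turn; the reflection preserves $\tT$ only when the quiddity sequence $(b,a_2,\dots,a_n,c,d)$ of $\Sur_i$ is palindromic about $p$, and the half-turn only when $(2d,b,a_2,\dots,a_n,a_1,a_2,\dots,a_n,c)$ is invariant under the shift by $n+1$; neither holds in general (already $b\neq c$ typically). The reason is that the two copies are glued along \emph{different} edges ($i_1$ of one to $i_2$ of the other), so the tautological identification of the copies is not induced by an automorphism of the triangulated polygon. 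What is true, and would repair this part, is a statement about individual arcs: each arc giving an entry of $\calR$ or $\hat\calR$ has one endpoint at $p$ and can be isotoped into a single closed half-disk, and the two half-disks are isomorphic as triangulated $(n{+}2)$-gons, so the two Laurent expansions (hence the two frieze values) coincide. The paper instead obtains $r_j=\hat r_j$ as a byproduct of its Ptolemy-relation argument. As written, your claim $\calR=\hat\calR$ is unsupported, and with it your base case $r_{n-1}=\hat r_{n-1}=c$ (Lemma~\ref{lemma-b-and-c} identifies $c$ with a type $D$ entry, but does not give $r_{n-1}=c$; the paper needs the corner-diamond computation for that).

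The inductive identification also has a gap. First, the diamonds straddling $\calR$ do not have all remaining corners in $\calA\cup\calB$: one side of $\calR$ is the region $\calC$, whose entries are not part of the glued pattern. One can avoid $\calC$ by always using the diamond whose top and right corners lie on the diagonal of $\calA$ adjacent to $\calR$ (together with the trivial row of $1$'s at the first step), which gives the recursion $r_{j+1}=(r_j\,a_{j,2}-1)/a_{j-1,2}$. But then the "check" that the alternating entries $a_{n,j}$, $a_{n-1,j}$ satisfy this recursion requires identities such as $a_{n,2}=a_{n-1,1}a_{1,2}-1$, which relate the first row of $\frakF_{\T}$ to its last two rows across the wrap-around of the pattern. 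These are not consequences of the local relations of Definition~\ref{def:D} applied row by row; establishing them is exactly the substance of the lemma. The paper supplies this input geometrically: it first pins down $r_{n-1}=a_{n-1,n-1}$ by one diamond at the corner of $\calA$ combined with a type $D$ relation, and then propagates along $\calR$ via Ptolemy relations on quadrilaterals in the punctured disk, each of which reappears as a pair of identical quadrilaterals in $\tilde{\Sur}_i$ — which simultaneously yields the identification with the bottom-row entries and the equality $r_j=\hat r_j$. Without an input of this kind (Ptolemy relations, or an explicit snake-graph/arc computation), your inductive step asserts rather than proves the required identities.
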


\begin{proof} We explain the case when $n$ is even, and the case when $n$ is odd uses the same argument.  
Claim: The frieze $\frakF_{\tT}$ contains a diamond \[\begin{matrix}&a_{n-2,n}\\r_{n-1}&& a_{n-1,n} a_{n,n}\\& 1_i\end{matrix}\] where $a_{n,n}= 1$. Proof of the claim: We observe that $a_{n-1, n}a_{n,n}$ is the entry in the corner of region $\mathcal{A}$ because of the structure of the frieze $\frakF_{\T}$ and $g(\frakF_{\T})$.  Since $a_{n,n}=1_i$, the entry in the first row of $g(\frakF_{\T})$ that lies on the northeast diagonal starting at $a_{n-1, n}a_{n,n}$ ends in $a_{1, n-2}$.  Moreover, we see that $a_{1,n-2}$ equals $a_n$ in the corresponding quiddity sequence.  This shows the claim.

\begin{figure}[h!]
\centering
\scalebox{.85}{\def\svgwidth{4.6in}
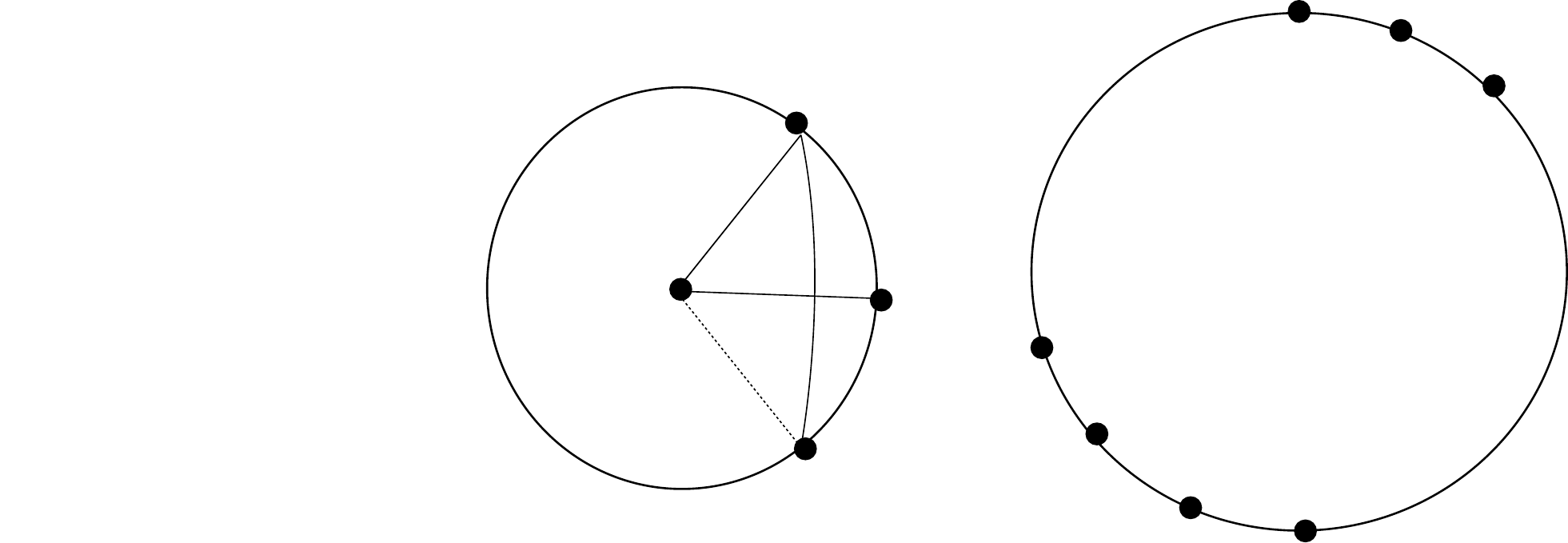}
\caption{Finding $r_{n-2} \in \calR$ via Ptolemy relations.}
\label{untagged}
\end{figure}

From the Definition~\ref{def:A} of type $A$ freize $r_{n-1} a_{n-1,n} = a_{n-2,n} +1$, and from the Definition~\ref{def:D} of type $D$ frieze $a_{n-1,n-1} a_{n-1,n}= a_{n-2,n}+1$. Hence $r_{n-1} = a_{n-1,n-1} = c = \hat r_{n-1}$, where the second equality follows from Lemma \ref{lemma-b-and-c}. 

Now we use the well-known Ptolemy relation for cluster variables in a triangulation, and the fact that our integers arise as evaluations $x_j = 1$ for all $j$ on these variables. See for example \cite[Section 3]{P05} and references therein.
Whenever there is a square such that five integers associated to its edges and diagonals are known, the remaining integer can be obtained from the formula in Figure~\ref{untagged} on the left. In the middle diagram of the same figure we have $r= a_{n,n-2}$ arising from a square (with edges evaluated to) $1,1,1,r$ and diagonals $a_n$ and $c=a_{n-1,n-1}$, where $a_n$ is obtained from the quiddity sequence. The same square evaluation appears in a pair of squares on the unpunctured disk. It follows immediately that $a_{n,n-2}=r_{n-2}=\hat r_{n-2}$. Moving clockwise along the punctured disk, the same idea can be used to find all integers in $\calR$. The last Ptolemy relation arises from a square $1,1,1,r_2=\hat r_2$ with diagonals $\hat r_1=r_1=b$ and $a_2$.   
\end{proof}

Before stating the next lemma we recall Remark~\ref{frieze_entries} that describes the correspondence between entries $1_j,2_j$ in the frieze $\frakF_{\tT}$, indexed by the arcs $j\in \tT$, and arcs in the surface $\tilde{\bf S}_i$ that are in $\tT$ or cross exactly one arc in $\tT$.

\begin{lemma}\label{lemma-A} Let $\tT$ be a triangulation of the disk obtained from a triangulation $\T$ of the punctured disk by a cut at $i$, and let $\frakF_{\tT}$ the associated frieze of type $A$. Then for all $j \in {\T}\setminus\{i\}$
 \begin{enumerate}
 \item[(a)] $1_j \in \calR \cup \calA$, and
 \item[(b)] $2_j \in \calR \cup \calA$.
 \end{enumerate}
\end{lemma}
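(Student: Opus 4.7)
The plan is to identify $\calR \cup \calA$ as an explicit family of arcs in $\tilde{\bf S}_i$, and then check that for each $j \in \T \setminus \{i\}$ both $1_j$ and $2_j$ belong to this family. I will label the boundary vertices of $\tilde{\bf S}_i$ cyclically by $v_0, v_1, \ldots, v_{2n+1}$ with $v_0 = p$ and $v_{n+1}$ the other endpoint of the diameter $i$, chosen so that $\{v_0, v_1, \ldots, v_{n+1}\}$ is the boundary of the second copy ${\bf S}_i^2$ of ${\bf S}_i$ (whose arcs carry no hat in $\tT$, per Definition~\ref{def:surface}). Using the sectional-path description of $\mathfrak{s}$ surrounding Definition~\ref{def:R} together with the arc-category correspondence of Section~\ref{arc_category}, I will verify that the entries $r_1, \ldots, r_{n-1}$ of $\calR$ correspond to the arcs $(v_0, v_2), (v_0, v_3), \ldots, (v_0, v_n)$, and that $\calA$ is the triangular region whose top row sits at the quiddity positions $v_2, \ldots, v_n$ while its lower rows account for all longer arcs with both endpoints in $\{v_1, \ldots, v_{n+1}\}$. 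Together this will show that $\calR \cup \calA$ consists precisely of the frieze positions of those interior arcs of $\tilde{\bf S}_i$ whose two endpoints both lie in $\{v_0, v_1, \ldots, v_{n+1}\}$, with the single exception of $i$ itself.

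For part (a), by the labeling convention of Definition~\ref{def:surface} the arc $j \in \tT$ is the image of $j \in \T \setminus \{i\}$ inside the second copy ${\bf S}_i^2 \subset \tilde{\bf S}_i$; its two endpoints therefore lie in $\{v_0, v_1, \ldots, v_{n+1}\}$. Since $j \ne i$, the identification above together with Remark~\ref{frieze_entries}(1) will place $1_j$ in $\calR \cup \calA$.

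For part (b), Remark~\ref{frieze_entries}(2) places $2_j$ at the frieze position of the unique arc $\gamma$ of $\tilde{\bf S}_i$ crossing only $j$, which is the flip of $j$ in $\tT$, i.e., the other diagonal of the quadrilateral $Q$ formed by the two triangles of $\tT$ adjacent to $j$. Because $j \ne i$, both such triangles sit inside the second copy ${\bf S}_i^2$ (no triangle of $\tT$ can straddle the diameter $i$ unless $i$ is one of its sides), so $Q$ and therefore $\gamma$ live inside ${\bf S}_i^2$; in particular the two endpoints of $\gamma$ belong to $\{v_0, v_1, \ldots, v_{n+1}\}$. Moreover $\gamma \ne i$, since $\gamma \notin \tT$ whereas $i \in \tT$. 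Applying the identification from the first paragraph to $\gamma$ then yields $2_j \in \calR \cup \calA$.

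The main obstacle will be the geometric identification in the first paragraph: carefully matching the sectional-path entries of $\calR$ and the triangular region $\calA$ to concrete arcs of ${\bf S}_i^2$ via the arc category. Once this identification is in place, parts (a) and (b) reduce to the observation that any arc in the no-hat copy, together with its flip in $\tT$, stays entirely in that copy, which is immediate from the gluing construction of $\tilde{\bf S}_i$.
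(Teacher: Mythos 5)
Your proposal is correct, and it differs from the paper's argument in a way worth noting. For part (a) the two proofs are close in spirit: the paper splits into the two cases ``$j$ attached to the puncture'' (handled by Lemma~\ref{lemma-R}, which places those entries on $\calR$) and ``$j$ not attached to the puncture'' (placed in $\calA$ via the arc category and Figure~\ref{R}(a)), whereas you prove the stronger, explicit statement that $\calR\cup\calA$ is exactly the set of positions of the interior arcs of $\tilde{\bf S}_i$ with both endpoints on the boundary of the unhatted half ${\bf S}_i^2$, excluding $i$ itself; your case analysis (arcs at $v_0=p$ land on $\calR$, the rest in $\calA$) is implicitly the same, and the counts confirm it, since $|\calR|+|\calA|=(n-1)+\tfrac{n(n-1)}{2}$ equals the number of such arcs minus one. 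The real divergence is in part (b): the paper identifies $2_j$ with the position that $1_j$ occupies after mutating at $j$ and then appeals to part (a) for the mutated triangulation, while you argue directly in the surface that the flip of $j$ is the other diagonal of a quadrilateral that cannot straddle the diameter $i$ (since $i\in\tT$ no triangle of $\tT$ crosses it), so the flipped arc again has both endpoints in the unhatted half and is distinct from $i$. Both routes are valid; yours buys a sharper description of $\calR\cup\calA$ (useful elsewhere, e.g.\ it makes Lemma~\ref{overlap}-type bookkeeping transparent) at the cost of carrying out the frieze-position/arc matching you flag as the main obstacle, which is precisely the step the paper outsources to Lemma~\ref{lemma-R} and the standard type-$A$ arc-category picture, and which does go through as you sketch it.
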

\begin{proof}
Part (a) follows by the construction of $(\tilde{\bf S}_i, {\tT})$ in Definition~\ref{def:surface} and the category of arcs in types $A$ and $D$, see Subsection \ref{arc_category}.   That is, if $j$ is an arc at the puncture in $\T$ then the corresponding arc in the new triangulation $\tT$ will be part of $\calR$ by Lemma~\ref{lemma-R}. On the other hand, if $j$ is not an arc at the puncture in $\T$ then it will lie in the area $\calA$ as in Figure~\ref{R} (a).

Part (b) follows from (a) and properties of mutation. Indeed, for $j\neq i$ the entry $2_j$ lies where the corresponding $1_{j}$ will be after mutation at the arc $j$. 
\end{proof}

Since $\calA$ and $\hat \calA$ are identical, and the same occurs with $\calR$ and $\hat \calR$, the statement in the previous lemma can also be reformulated for $\hat{j}$, $\hat \calA$, and $\hat \calR$.

\begin{defi}\label{def:patternG}
We define a {\it pattern $\frakG_{\T}$} using the symmetries of the type $A_{2n-1}$ frieze $\frakF_{\tT}$ as follows.  Take the first $n-1$ nontrivial rows of $\frakF_{\tT}$ together with $1_i$, and remove the region $\calC$ (see Figure  \ref{AR-quiver-1}). The correspondence of $\calR$ and $\hat \calR$ in $\mathfrak{s}$ makes possible to identify $\calR$ and $\hat \calR$ in such way that the integer in the first row in $\hat \calR$, that is $c=r_{n-1}$, is equal to the integer in the $(n-1)$-st row of $\calR$, the same occurs with the second row in $\hat \calR$ and the $(n-2)$-nd row of $\calR$ and so on. Finally, we obtain the desired pattern $\frakG_{\T}$ that contains the glued pattern $g(\frakF_{\T})=\calA\cup \calB$. 
\end{defi}

Observe that the pattern $\frakG_{\T}$  (Figure~\ref{AR-quiver-2}) carries the 1's above the quiddity sequences in $\calA$ and $\hat \calA$ in $\frakF_{\tT}$.   These 1's correspond to boundary segments on the punctured disk $\Sur$, and now they are considered to be part of the regions $\calA$ and $\hat \calA$ as appears in the figure.  For an example of $\frakG_{\T}$ see Figure~\ref{final-example}.

\begin{figure}[h!]
\centering
\scalebox{.87}{\hspace*{-0.6cm}
\def\svgwidth{5.6in}
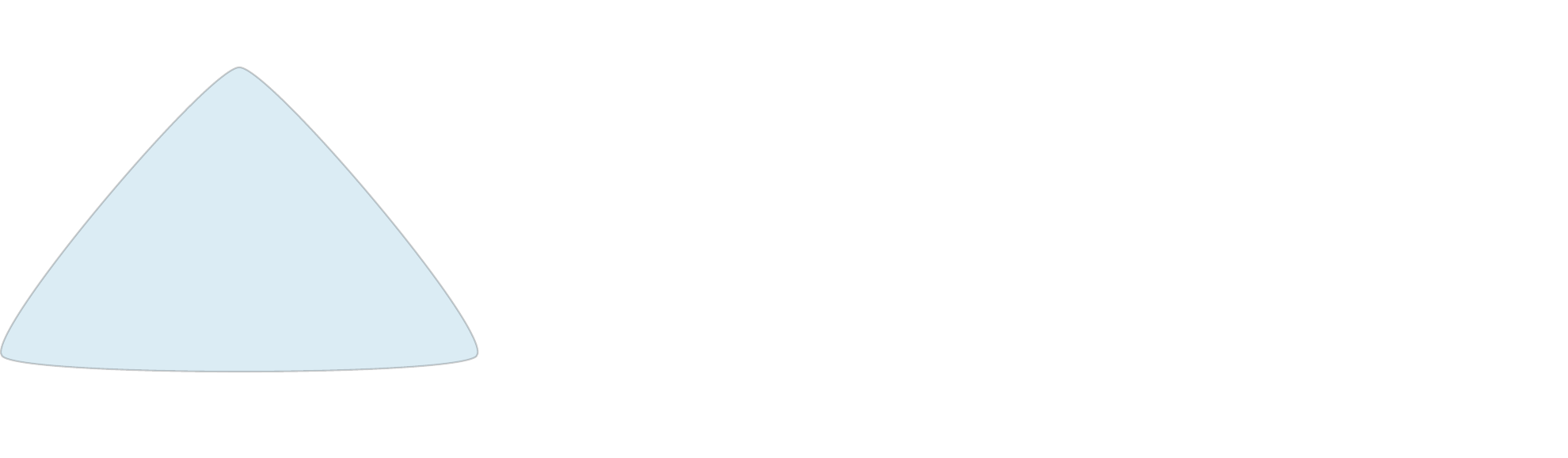}
\caption{Definition \ref{def:patternG} of the Pattern $\frakG_{\T}$.}
\label{AR-quiver-2}
\end{figure}

\begin{remark}\label{properties} $\frakG_{\T}$ has the following properties.

\begin{enumerate}
\item The pattern $g(\frakF_{\T})$ is included in $\frakG_{\T}$ as $\calA \cup \calB$ but also as $\calB \cup \hat \calA$. 
\item By Lemma \ref{lemma-R}, $\calR$ contains the integers associated to arcs at the puncture in $\frakF_{\T}$, so $\frakG_{\T}$ can be easily built from the initial type $D$ pattern $\frakF_{\T}$. 
\item The type $D$ pattern can be recovered from $\frakG_{\T}$. If we denote by $v_1, \ldots, v_{n-1}$ the entries in the longest row of $\calB$ from left to right, then by Definition~\ref{def:gluedfrieze} of gluing we obtain the entries associated to the last two rows of type $D$ frieze. The neighboring entries are of the form $\begin{smallmatrix} v_j / r_j && r_{j+1}\\ r_j && v_{j+1} / r_{j+1}\end{smallmatrix}$.
\item It is locally a type $A$ pattern, i.e. each diamond $\begin{smallmatrix}&a\\b&&c\\&d\end{smallmatrix}$ satisfies $bc-ad=1$.
\end{enumerate}
\end{remark}

\begin{remark}
Recall that we did not define $\mu_a({\bf T})$ when $a=\gamma_r$ is a radius of a self-folded triangle.  However, using tagged triangulations, it is possible to extend the notion of mutation to include this case, see \cite{FST}.  In doing so, the new frieze $\mu_{\gamma_r}(\frakF_{\bf T})$ equals $\mu_{\gamma_l}(\frakF_{\bf T})$ up to interchanging the last two nontrivial rows, where $\gamma_l$ is the loop of this self-folded triangle.  Therefore, without loss of generality we exclude the case when $a$ is a radius of a self-folded triangle when studying mutations of friezes of type $D$. 
\end{remark}

Mimicking the construction in type $A$ (as in Section \ref{section-mutation-type-A} and \cite{baur2018mutation}), we define rays, regions, and projections in $\frakG_{\T}$ induced by mutation at $a$.  Recall that the local configuration of the arc $a$ in $\tilde{\bf{T}}$ is as in Figure~\ref{type-A}, and it defines the arcs $b,c,d,e$ forming a quadrilateral with diagonal $a$.


\subsection{Rays in $\frakG_{\T}$}

In order to describe the action of $\mu_a$ (for $a \neq i$) on a type $D$ frieze, we define special paths and regions over $\frakG_{\T}$ in a similar way that is done in Section \ref{section-mutation-type-A} for a type $A$ frieze.

In this section we  deal with certain sectional paths in the pattern $\frakG_{\T}$.  For that we consider two cases depending on the location of $1_a$ and $2_a$,  either one or none of them are in $\calR$, see Lemma \ref{lemma-A}.

\begin{notation} A sectional path starting at $a$ and ending at $b$ that does not pass through 1's, except maybe for its endpoints, is called a \emph{ray} starting at $a$ and ending at $b$ and denoted by $(a,b)$.
\end{notation}

{\bf Case 1:} We consider $2_{a}, 1_{a} \in \calA$ and $2_a$ is to the left of $1_a$, see Figure~\ref{cases-1}.  We always refer to NE and SE directions in $\frakG_{\T}$ with respect to this figure.

\begin{figure}[h!]
\centering
\scalebox{.92}{\hspace*{-0.6cm}
\def\svgwidth{6.8in}
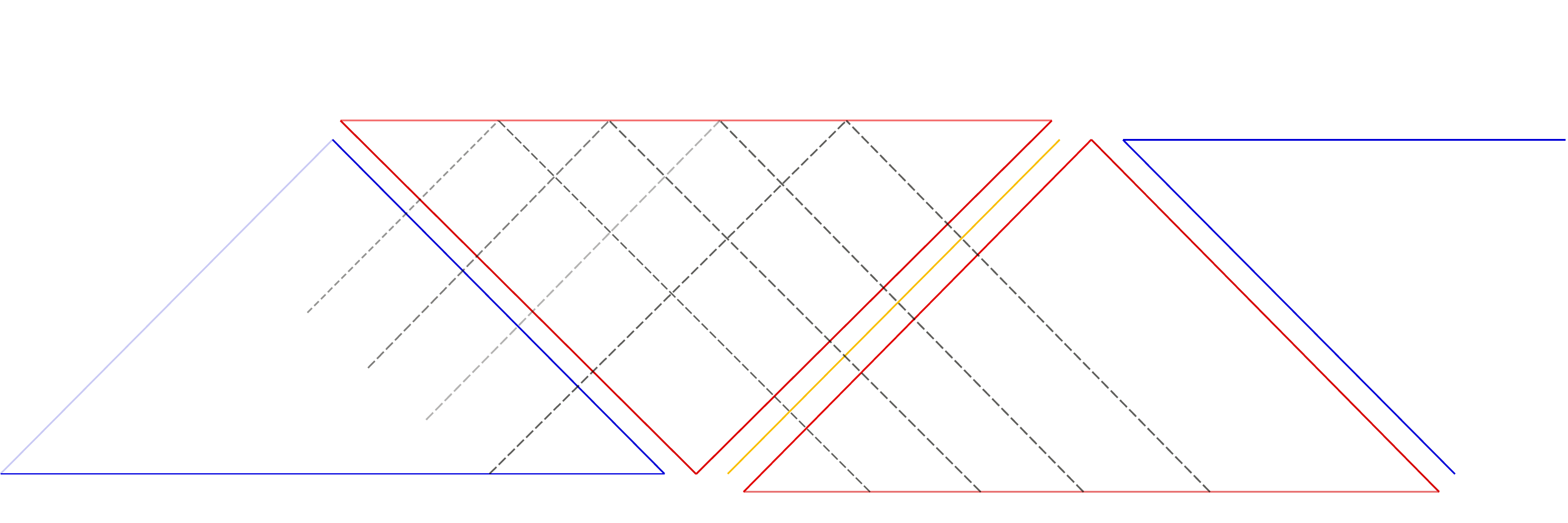}
\caption{Rays and regions in case 1, with $1_{a} \in \calA$ and $2_{a} \in \calA$ is at its left.}  
\label{cases-1}
\end{figure}

Recall the arrows representing elementary moves in the category of arcs, 
introduced after Definition \ref{def:pivot} in Section \ref{section-mutation-type-A}. 
We consider directed paths in $\frakG_{\T}$ using the orientation of these arrows. There is a NE ray in $\frakG_{\T}$  given by $(1_{c}, 1_{a})$, and a SE ray $(1_{e},1_{a} )$. There is a NE ray $(2_{a}, 1_{e})$ and a SE ray $(2_{a},1_{c})$ as the reader can see in Figure \ref{cases-1}. By Section \ref{section-mutation-type-A}, we know that the rays described above bound a rectangular region $\calZ$ in $\frakF_{\tT}$, and moreover there is a NE ray $(1_{b}, 2_{a})$ and a SE ray $(1_{a}, 1_{d})$ that bound another rectangular region $\calZ$ in $\frakF_{\tT}$.

The rays mentioned above can be extended downwards until each one reaches a 0, denoted by $0_{x}$ or $0_{y}$, in the trivial row.  For each one of these four entries $0_{y}, 0_{x}, 0_{y}, 0_{x}$ in the inferior row, there are two maximal sectional paths (from now on \emph{ m.s.p.} to abbreviate) one starting and the other one ending at the said entry.  We denote by $b_1,b_2, b_3, b_4$, from left to right, the last element of the m.s.p starting at the zeroes. Similarly, we denote by $0_{\hat y}$ and $0_{\hat x}$ the first elements of the  m.s.p. ending at the $0_x$'s and $0_y$'s respectively.

Now we repeat the process. There are four m.s.p. ending at those $0_{\hat y}$ and $0_{\hat x}$, and we denote by $\hat b_1,\hat b_2, \hat b_3, \hat b_4$, from left to right, the first element of the m.s.p.'s starting at the zeroes. We will see that all the intersections of the twelve m.s.p's above determine starting and ending points of rays $(1_{\hat c}, 1_{\hat a}), (1_{\hat e}, 1_{\hat a})$ and rays $(2_{\hat a}, 1_{\hat e}), (2_{\hat a}, 1_{\hat c})$ as we see in the Figure \ref{cases-1}. Moreover we will prove in this section that $b_i = \hat b_i$, therefore the rays mentioned bound rectangular regions in the pattern.

First, we need the following result.  

\begin{lemma}\label{lem:ray}
In Case 1 there exists a ray $(1_{\hat d}, 2_a)$ in the pattern $\frakG_{\T}$. 
\end{lemma}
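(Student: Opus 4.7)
The plan is to construct the ray explicitly by exploiting the gluing of $\calR$ with $\hat\calR$ that defines $\frakG_{\T}$ (Definition \ref{def:patternG}). In Case 1 we have $2_a \in \calA$, and by symmetry $2_{\hat a}, 1_{\hat a} \in \hat\calA$ with $2_{\hat a}$ to the left of $1_{\hat a}$. By Lemma \ref{lemma-A}(a) applied to $\hat d$, the entry $1_{\hat d}$ lies in $\hat\calR \cup \hat\calA$. In the type $A_{2n-1}$ frieze $\frakF_{\tT}$ the entries $1_{\hat d}$ and $2_a$ sit on opposite sides of the diameter $\mathfrak{s}$, so no sectional path of $\frakF_{\tT}$ alone connects them inside the restricted shape of $\frakG_{\T}$; however the identification $\calR \equiv \hat\calR$ makes such a connection possible, and this is the mechanism the ray uses.

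First I would pin down which sectional direction to take. Passing from $\hat d$ to $a'$ (the arc represented by $2_a$ by Remark \ref{frieze_entries}(2)) via elementary moves (Remark \ref{sec_paths}) means fixing the endpoint of $\hat d$ that does not lie on the cut and rotating the other one counterclockwise across the glued segment; in $\frakG_{\T}$ this rotation translates, by the construction of $\mathfrak{s}$ as rotations around $p$ in Definition \ref{def:R}, into going NE inside $\hat\calA$ to some point $x$ on $\hat\calR$, and then continuing SE inside $\calA$ from the identified point $x \in \calR$ until landing at $2_a$. By Lemma \ref{lemma-R} the entries of $\calR$ and $\hat\calR$ match row-for-row, so concatenating these two sectional segments across the identification yields a single sectional path in $\frakG_{\T}$ from $1_{\hat d}$ to $2_a$.

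Finally I would verify the ``no $1$ in the interior'' condition. By Remark \ref{frieze_entries}(1) an interior $1$ would correspond to an arc $\delta$ of $\tT$; but the arcs traced along the sectional path are, in order, the intermediate elementary rotations between $\hat d$ and $a'$, and in Case 1 these rotations fall inside the region bounded by the arcs of the quadrilateral $\{b,c,d,e\}$ around $a$ and its reflected counterpart, none of which is crossed by the arc sweep except at the prescribed endpoints. The main obstacle I expect is precisely this last verification: one needs to rule out that the sectional path accidentally passes through $1_b, 1_c, 1_e, 1_{\hat b}, 1_{\hat c}, 1_{\hat e}$, or $1_i$ (the latter lying in the middle of $\mathfrak{s}$, hence also in $\calR = \hat\calR$). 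This will likely split into a generic subcase (where $d$ is not attached to the puncture, so $1_{\hat d} \in \hat\calA$ strictly) and a boundary subcase (where $d$ is attached to the puncture, so $1_{\hat d} \in \hat\calR$ and the ray starts already on the glued rail), the first being the one requiring genuine sectional argument and the second being immediate.
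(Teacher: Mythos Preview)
Your approach is the same as the paper's: build the sectional path in $\frakG_{\T}$ as two sectional pieces in $\frakF_{\tT}$ glued along the identification $\calR\equiv\hat\calR$, and then check that no interior entry is a $1$. Where you diverge from the paper is in the execution, and in where you think the difficulty lies.

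The description ``fixing the endpoint of $\hat d$ that does not lie on the cut and rotating the other one counterclockwise across the glued segment'' is not well-posed in $\tilde{\bf S}_i$: the arcs $\hat d$ and $a'$ share no endpoint (the former has endpoints $\hat t_1,\hat s_2$, the latter $t_1,t_2$), so there is no single elementary-move sweep between them. The paper makes this precise by naming the arc at the junction: starting from $a'$ and rotating $t_2$ clockwise to $q$ yields $\gamma_r$ with endpoints $(q,t_1)$, hence $r\in\calR$; the symmetry of Lemma~\ref{lemma-R} identifies this with the arc $\gamma_{\hat r}$ with endpoints $(q,\hat t_1)$; and from $\gamma_{\hat r}$ one rotates $q$ clockwise to reach $\hat d$. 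The point you glossed over (``to some point $x$ on $\hat\calR$, and then continuing \dots from the identified point $x\in\calR$'') is exactly this: one must check that the index $j$ with $\hat r_j$ reached from $1_{\hat d}$ equals the index with $r_j$ leading to $2_a$, and the shared endpoint $t_1\leftrightarrow\hat t_1$ is what guarantees it.

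Conversely, the ``main obstacle'' you flag is handled by the paper in one sentence: every intermediate arc in either of the two elementary-move sequences crosses some arc of $\tT$ (the first sequence always crosses $a$ or arcs between $e$ and $i$; the second always crosses $\hat a$ or arcs between $\hat b$ and $i$), so by Remark~\ref{frieze_entries}(1) no interior entry equals $1$. There is no need to enumerate $1_b,1_c,1_e,1_i,\dots$ individually, nor to split into a separate subcase when $d$ is attached to the puncture.
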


\begin{proof}
Consider the surface $(\tilde{\bf S}_i, {\tT})$ and the local configuration of the arc $a$ depicted in Figure~\ref{gamma}(a).  Note that some of the arcs $b,c,d$ and their respective counterparts $\hat b, \hat c, \hat d$ may be boundary segments of $\tilde{\bf S}_i$.  By Remark~\ref{frieze_entries}(2), the entry $2_a$ in the frieze $\frakF_{\tT}$ corresponds to the arc $a'$ that crosses only one arc $a$ in the triangulation $\tT$.  The endpoints of $a'$ are $t_1, t_2$, where $t_1$ is the common endpoint of $c$ and $d$, while $t_2$ is the common endpoint of $e$ and $b$.  
 
Now consider the entry $2_a$ in the pattern $\frakG_{\T}$.  There are two maximal sectional paths in $\frakG_{\T}$ ending in $2_a$.  One of them contains the NE ray $(1_b, 2_a)$ as in Figure~\ref{cases-1}.  Let $(r,2_a)$ be the second sectional path in $\frakG_{\T}$ ending in $2_a$ and starting in some $r\in \calR$.   By Remark~\ref{sec_paths}, this sectional path corresponds to a sequence of clockwise elementary moves starting with the arc $a'$ and moving its endpoint $t_2$ clockwise until we obtain an arc $\gamma_r$ with endpoints $q$ and $t_1$.   This arc corresponds precisely to the entry $r$ in the frieze by Lemma~\ref{lemma-R}.  The same lemma also implies that the associated entry $\hat r\in \hat{\calR}$ in the frieze $\frakF_{\tT}$ corresponds to the arc $\gamma_{\hat{r}}$ with endpoints $q$ and $\hat{t}_1$.   Now starting with $\gamma_{\hat r}$ we can again perform moves that move $q$ clockwise while keeping $\hat{t}_1$ fixed until we obtain the arc $\hat d$.   This implies that in $\frakF_{\tT}$ there is a sectional path $(1_{\hat d}, \hat{r})$. By construction of $\frakG_{\T}$ in Definition~\ref{def:patternG} the two sectional paths $(1_{\hat d}, \hat{r}), (r, 2_a)$ in $\frakF_{\tT}$   become a single sectional path $(1_{\hat d}, 2_a)$ in $\frakG_{\T}$ induced by identifying $\calR$ and $\hat{\calR}$.   Finally, we observe that this path is actually a ray, because the sequences of elementary moves described above always yield arcs that cross elements of ${\tT}$.  This implies that the path $(1_{\hat d}, 2_a)$ does not pass through any 1's as desired.  
\end{proof}

Let $\gamma_{\hat d}, \gamma_{b}$ be two maximal sectional paths in $\frakG_{\T}$ ending in $1_{\hat d}, 1_{b}$ respectively, such that the compositions $\gamma_{\hat d}\circ(1_{\hat d}, 2_a),  \gamma_{b}\circ (1_b, 2_a)$ are not sectional. With this notation consider the following result.

\begin{lemma}\label{gluerays}
The paths $\gamma_{\hat d}, \gamma_{b}$ in $\frakG_{\T}$ intersect in entry $1_{\hat a}$.
\end{lemma}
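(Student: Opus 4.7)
The plan is to show that both $\gamma_{\hat d}$ and $\gamma_b$ pass through $1_{\hat a}$, using the local quadrilateral structures at $\hat a$ in $\hat \calA$ and at $a$ in $\calA$ together with the fold identification of Definition \ref{def:patternG}.

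For $\gamma_{\hat d}$, I would apply the type $A$ mutation framework of Section \ref{section-mutation-type-A} to the arc $\hat a \in \tT \setminus \{i\}$ in $\frakF_{\tT}$ to produce, within $\hat \calA$, the ray $(1_{\hat a}, 1_{\hat d})$ from the local quadrilateral at $\hat a$. The two sectional lines through $1_{\hat d}$ in $\frakG_{\T}$ correspond to fixing one of the two endpoints of $\hat d$ in a counterclockwise arc sweep: one of these is the sectional line containing $(1_{\hat a}, 1_{\hat d})$, which stays entirely within $\hat \calA$, while the other is the sectional line containing $(1_{\hat d}, 2_a)$ from Lemma \ref{lem:ray}, which crosses the fold $\calR=\hat \calR$ to reach $\calA$. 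These lines are therefore distinct. Since $\gamma_{\hat d} \circ (1_{\hat d}, 2_a)$ is not sectional by hypothesis, $\gamma_{\hat d}$ must coincide with the maximal extension of $(1_{\hat a}, 1_{\hat d})$ and so contains $1_{\hat a}$.

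For $\gamma_b$, I would exploit the cut-and-glue symmetry of $\tilde{\bf{S}}_i$ from Definition \ref{def:surface}. The two sectional lines through $1_b$ in $\frakG_{\T}$ are the line containing the ray $(1_b, 2_a)$ (entirely within $\calA$ and prescribed by the local quadrilateral at $a$) and a second sectional line. By the same local quadrilateral at $a$, this second line contains $1_a$ further along. Extending it beyond $1_a$ towards the fold and applying the row-reversal identification of Definition \ref{def:patternG}, it crosses into $\hat \calA$ as the sectional line through the mirror counterpart $1_{\hat a}$ of $1_a$. Hence this line contains $1_{\hat a}$, and since $\gamma_b \circ (1_b, 2_a)$ is not sectional, $\gamma_b$ coincides with this extended line and contains $1_{\hat a}$ as required.

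The main obstacle lies in the second part: verifying rigorously that the second sectional line through $1_b$ in $\calA$, extended through the identification $\hat r = r$, indeed becomes the sectional line through $1_{\hat a}$ in $\hat \calA$. This is an analog of Lemma \ref{lem:ray} with the roles of $\bf{S}^1_i$ and $\bf{S}^2_i$ swapped, and the verification requires tracing arcs obtained by fixing appropriate endpoints and performing counterclockwise elementary moves through the puncture image $p$, then checking compatibility of the two sweeps at the fold using Lemma \ref{lemma-R}.
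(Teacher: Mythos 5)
Your treatment of $\gamma_{\hat d}$ is essentially the argument the paper gives for that half: the glued ray $(1_{\hat d},2_a)$ of Lemma~\ref{lem:ray} leaves $1_{\hat d}$ in the direction of $(1_{\hat d},2_{\hat a})$ (its first piece in $\frakF_{\tT}$ is $(1_{\hat d},\hat r_j)$, a subpath of that ray), so the non-sectionality hypothesis forces $\gamma_{\hat d}$ onto the other diagonal, which carries the copy $(1_{\hat a},1_{\hat d})$ of $(1_a,1_d)$ inside $\hat\calA$. Note, though, that your stated reason for the two lines being distinct (``one stays in $\hat\calA$, the other crosses the fold'') does not establish this: the two segments lie on opposite sides of $1_{\hat d}$ (one ends there, the other starts there), so they could a priori be collinear; what you actually need is the identification of the initial direction of $(1_{\hat d},2_a)$ with that of $(1_{\hat d},2_{\hat a})$, plus the type $A$ fact that $(1_{\hat a},1_{\hat d})$ and $(1_{\hat d},2_{\hat a})$ are adjacent sides of the local rectangle, hence on different diagonals. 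This is a repairable omission.

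The genuine gap is in the $\gamma_b$ half, and it is exactly the step you yourself flag as ``the main obstacle,'' so the proposal is incomplete precisely where the lemma has content. Moreover your setup of that step is wrong: in $\frakG_{\T}$ the second sectional line through $1_b$ does \emph{not} contain $1_a$. In $\frakF_{\tT}$ the ray $(1_a,1_b)$ crosses the diagonal $\calR$ at an interior entry $r_i$, so the fold is met strictly between $1_a$ and $1_b$; following $\gamma_b$ backwards from $1_b$ one therefore reaches the fold \emph{before} the position where $1_a$ would be, and past the identification the path runs into $\hat\calA$, never through $1_a$. Thus ``extend it beyond $1_a$ towards the fold'' reverses the actual order, and the assertion that beyond the fold the line becomes the sectional line through $1_{\hat a}$ is left unproved. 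The paper closes this not by re-running the arc-tracing argument of Lemma~\ref{lem:ray}, but by a positional argument: since $\calA$ and $\hat\calA$ are identical sub-patterns of $\frakF_{\tT}$, the rays $(1_a,1_b)$ and $(1_{\hat a},1_{\hat b})$ cross $\calR$ and $\hat\calR$ at entries with the \emph{same} index $i$; under the identification $r_i\equiv\hat r_i$ of Definition~\ref{def:patternG}, the two half-rays $(1_{\hat a},\hat r_i)$ and $(r_i,1_b)$ glue into the terminal portion of $\gamma_b$, and since neither half contains a $1$ in its interior and $r_i\neq 1$, the first $1$ met along $\gamma_b$ before $1_b$ is $1_{\hat a}$. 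Your proposed repair (a second arc-tracing argument with the two copies of $\bf{S}_i$ swapped, checked against Lemma~\ref{lemma-R}) could likely be carried out, but as written it is a plan rather than a proof, and the surrounding claim about $1_a$ would have to be deleted rather than fixed.
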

\begin{proof} 
Recall that $\calA$ and $\hat \calA$ are identical sub-patterns of $\frakF_{\tT}$, hence $1_{a}, 1_{d}$ and $1_{\hat a},1_{\hat d}$ lie in the same positions in $\calA$ and $\hat \calA$. The same holds for $1_b,2_a$ and $1_{\hat b}, 2_{\hat a}$. This implies that  the rays $(1_a, 1_b)$ and $(1_d,2_a)$, considered in $\frakF_{\tT}$, cross $\calR$ at $r_i$ and $r_j$ respectively, with $i < j$, while the rays $(1_{\hat a}, 1_{\hat b})$ and $(1_{\hat d}, 2_{\hat a})$ cross $\hat{\calR}$ at $\hat r_i$ and $\hat r_j$. We will use the identification $r_i \leftrightarrow \hat r_i$ given in Lemma \ref{lemma-R}.
From Definition~\ref{def:patternG} of $\frakG_{\T}$, we obtain $\gamma_{b}$ by gluing two paths $( - , \hat r_i)$ and $(r_i,1_b) $ in the type $A$ frieze. Observe that $(r_i, 1_b)$ is in fact a ray and $r_i \neq 1$ because in $\frakF_{\tT}$ it is a part of the ray $(1_a, 1_b)$. Similarly, the first 1 in NW direction from $\hat r_i$ has to be $1_{\hat a}$, that is to say $(1_{\hat a}, \hat r_i)$ is a ray because in $\frakF_{\tT}$ it is a part of the ray $(1_{\hat{a}}, 1_{\hat{b}})$.  This shows that in $\frakG_{\T}$ the first 1 in NW direction from $1_b$ is $1_{\hat a}$. On the other hand, since $\calA$ and $\hat \calA$ are identical, we have a ray $(1_a, 1_d)$ and a corresponding one $(1_{\hat a}, 1_{\hat d})$, so  in $\hat{\calA}$ the first 1 in SW direction from $1_{\hat d}$ is $1_{\hat a}$ as desired.   
\end{proof}

This shows that we have rays $(0_{\hat y}, 1_{\hat b}), (1_{\hat b}, 0_{\hat x}), (0_{\hat x}, 1_{\hat c}), (1_{\hat c}, 0_{\hat y}), (0_{\hat y}, 1_{\hat d}), (1_{\hat d}, 0_{\hat x})$,
together with the corresponding ones in $\calA$. We also have rays starting at $1_{\hat e}$, $1_{\hat a} $, $1_{\hat d}$, $0_{\hat x}$ crossing $\calR$ and ending at $0_y$, $1_b$, $2_a$, $1_e$ respectively. To show that the rays $(1_{\hat e}, 0_y)$ and $(0_{{\hat x}}, 1_e)$  pass through $\calR$ as in the figure,  the same argument as in the proof of Lemma~\ref{gluerays} can be used, and therefore we omit it.

On the other hand, if $1_a, 2_a \in \calA$ and $2_a$ appears to the right of $1_a$, then we still obtain the exact same configuration of rays as in Figure~\ref{cases-1}, by replacing $1_a, 1_{\hat a}$ with $2_a, 2_{\hat a}$ respectively.  Note that instead of $\frakG_{\T}$ this relabeling corresponds to looking at the pattern $\frakG_{\mu_a(\T)}$ and the associated paths determined by mutation at $a'$, see Definition~\ref{def:mut}.  This completes the description of rays in case 1.

{\bf Case 2:} The remaining possibility, is whenever $1_a, 2_a$ are not both in $\calA$.  The case when $1_a\in \calR$ and $2_a\in \calA$ is depicted in Figure~\ref{cases-2}.  

\begin{figure}[h!]
\centering
\scalebox{.92}{\hspace*{-0.6cm}
\def\svgwidth{6.6in}
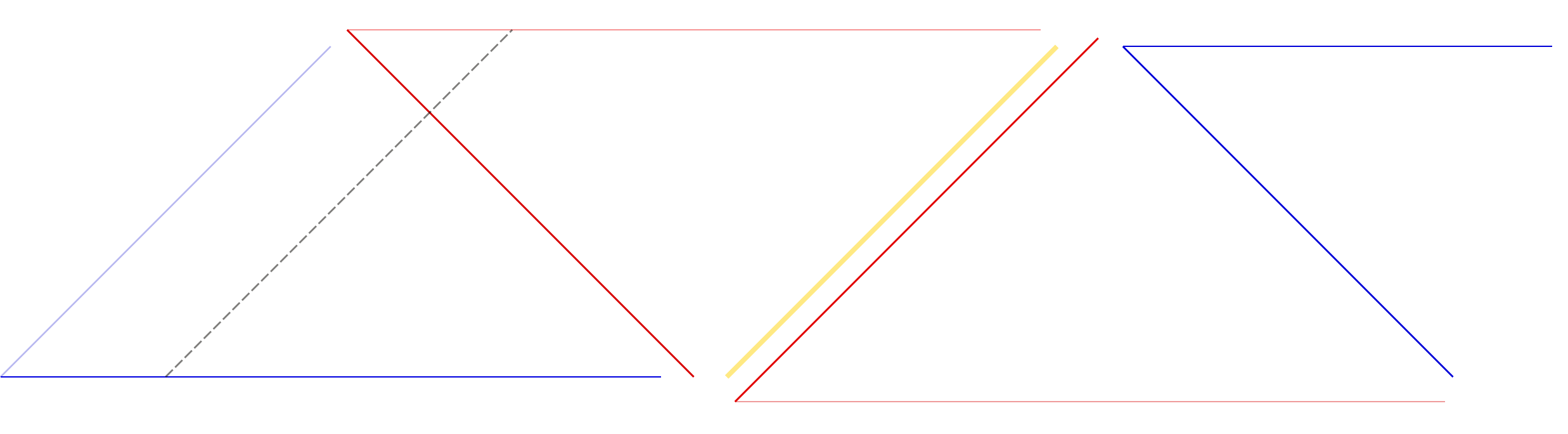}
\caption{Rays and regions in case 2, with $1_a \in \calR$.}
\label{cases-2}
\end{figure}

The definition of the corresponding rays in $\frakG_{\T}$ is similar to the previous case, so we omit the detailed discussion.  Thus, for the description of these paths we refer to the figure.  On the other hand, if $1_a\in \calA$ and $2_a\in \calR$  then again we obtain the exact same configuration as in the case $1_a\in \calR$ and $2_a\in \calA$ by interchanging the position of $1_a, 1_{\hat a}$ with $2_a, 2_{\hat a}$.  This completes the description of rays in all cases.

\smallskip

Next we find a precise relationship between paths starting and ending in the longest row of $\calB$.  In particular, in the case $1_a, 2_a \in \calA$ there are four rays $(\hat b_1, 0_{\hat y}), (\hat b_2, 1_{\hat b}), (\hat b_3, 2_{\hat a}), (\hat b_4, 1_{\hat e})$ starting in the last row of $\calB$ and four rays $(1_e, b_1), (1_a, b_2), (1_d, b_3), (0_{{x}}, b_4)$ ending in the last row of $\calB$.  We claim that these two sets of rays pairwise intersect in the last row of $\calB$.  Analogous situation occurs in the case when $1_a$ or $2_a$ belongs to $\calR$.
Thus, we can think of the former set of rays being obtained from the latter by ``bouncing off" the boundary of $\calB$.

\begin{lemma}\label{overlap}
In $\frakG_{\T}$ the entries $b_j=\hat b_j$ for all $j=1, \dots, 4$. 
\end{lemma}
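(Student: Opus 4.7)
The plan is to prove $b_j = \hat b_j$ by showing that the ray defining $b_j$ (emanating from $\calA$) and the ray defining $\hat b_j$ (emanating from $\hat\calA$) are two halves of a single, longer sectional path in $\frakG_{\T}$ that glues across the identified boundary $\calR = \hat\calR$. Since the pattern $\frakG_{\T}$ is locally a type $A$ pattern (Remark~\ref{properties}(4)), such a sectional path is uniquely determined by its starting point and direction, so the two halves necessarily meet at the same intermediate entries, forcing $b_j = \hat b_j$ at the corresponding row of $\calB$.

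Concretely, I would first focus on Case~1, when $1_a, 2_a \in \calA$; the other case reduces to an analogous argument. The strategy is to extend the reasoning of Lemma~\ref{gluerays} to the specific rays involved in the definitions of $b_j$ and $\hat b_j$. Lemma~\ref{lemma-R} identifies $\calR$ with $\hat\calR$ entry-by-entry via $r_k = \hat r_k$, and I would use this identification exactly as in Lemma~\ref{gluerays} to continue a sectional path entering $\calR$ from the $\calA$-side into the $\hat\calA$-side through $\hat\calR$. For each $j \in \{1,2,3,4\}$, I would pick the explicit m.s.p.\ that defines $b_j$ (for instance, the path through $1_e, 1_a, 1_d$ for $b_3$, as in Figure~\ref{cases-1}), extend it across $\calR = \hat\calR$, and check that the extension is exactly the m.s.p.\ that defines $\hat b_j$ (in that example, the path through $1_{\hat e}, 1_{\hat a}, 1_{\hat d}$ ending at $\hat b_3$).

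The key geometric input is the symmetry between $\calA$ and $\hat\calA$: since both regions arise from the same triangulation $\T$ via the cutting-and-gluing construction of Section~\ref{section-cut}, the entries $1_a, 2_a, 1_b, 1_c, 1_d, 1_e$ occupy positions in $\calA$ that mirror the positions of $1_{\hat a}, 2_{\hat a}, 1_{\hat b}, 1_{\hat c}, 1_{\hat d}, 1_{\hat e}$ in $\hat\calA$ under the identification of $\calR$ with $\hat\calR$. Consequently, a ray starting at one of these entries in $\calA$ and exiting $\calA$ into $\calR$ at $r_k$ reappears in $\hat\calA$ from $\hat r_k = r_k$ as a ray whose starting point corresponds exactly to the mirror entry in $\hat\calA$. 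This is the precise statement needed to glue the $\calA$-side ray ending at $b_j$ with the $\hat\calA$-side ray ending at $\hat b_j$ into a single path, forcing the two endpoints to agree.

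The main obstacle I anticipate is the bookkeeping across the cases. In Case~1 there are four pairs $(b_j, \hat b_j)$ to match, and Case~2 (with $1_a \in \calR$ or $2_a \in \calR$) introduces further subcases in which the ray through the corresponding entry is either truncated or crosses $\calR$ in a different position. A uniform case-by-case verification, guided by the explicit diagrams in Figures~\ref{cases-1} and \ref{cases-2}, should handle all the possibilities, but the argument is conceptually straightforward once Lemma~\ref{lemma-R} is applied to glue corresponding rays across the identified diagonal.
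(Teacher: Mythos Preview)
Your plan has a genuine gap: the rays that define $b_j$ and $\hat b_j$ do not cross $\calR$ at all, so the mechanism of ``gluing across the identified boundary $\calR=\hat\calR$'' that you borrow from Lemma~\ref{gluerays} is inapplicable here. Look again at Figure~\ref{cases-1} and the paragraph preceding the lemma: the rays in question are $(1_e,b_1),(1_a,b_2),(1_d,b_3),(0_x,b_4)$, each going from an entry in $\calA$ \emph{northeast into} $\calB$ (the copy on the right of the figure), and $(\hat b_1,0_{\hat y}),(\hat b_2,1_{\hat b}),(\hat b_3,2_{\hat a}),(\hat b_4,1_{\hat e})$, each going from the last row of the left copy of $\calB$ northeast into $\hat\calA$. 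Neither family touches $\calR$, which sits on the \emph{other} side of $\calA$ (resp.\ $\hat\calA$). Your concrete example (``the path through $1_e,1_a,1_d$ for $b_3$'') also does not match the definition: $b_3$ is the endpoint of the single ray $(1_d,b_3)$, not of a concatenation through $1_e,1_a,1_d$.

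What is actually needed is to show that the endpoint of $(1_a,b_2)$ in the last row of $\calB$ coincides with the starting point of $(\hat b_2,1_{\hat b})$ in that same row. The paper does this by working in the surface $\tilde{\bf S}_i$: it exhibits a specific arc $\gamma$ with endpoints $s$ and $\hat s$ (Figure~\ref{gamma}(a)) and argues, via counts of elementary moves and the built-in symmetry $s\leftrightarrow\hat s$ of the glued polygon, that (i) the frieze entry $m_\gamma$ lies in the last row of $\calB$, (ii) moving $\hat s$ clockwise to obtain $a$ gives a sectional path $1_a\to\cdots\to m_\gamma$, and (iii) moving $s$ counterclockwise to obtain $\hat b$ gives a sectional path $m_\gamma\to\cdots\to 1_{\hat b}$. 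This surface argument is precisely what replaces the unavailable ``glue through $\calR$'' step. If you want to rescue your symmetry idea, you would need the glide symmetry of $\frakF_{\tT}$ (swapping $\calA\leftrightarrow\hat\calA$ and the two copies of $\calB$), which again comes from the surface construction rather than from Lemma~\ref{lemma-R} alone.
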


\begin{proof}
We prove this claim for a particular pair of rays, however the same argument can be used to justify the rest.  Suppose $1_a,2_a\in \calA$ as in Figure~\ref{cases-1}.  Then we want to show that the rays $(1_a, b_2)$ and $(\hat b_2, 1_{\hat b})$ intersect in the last row of $\calB$.
The local configuration of the arc $a$ in the polygon $\tilde{\bf S}_i$ is depicted in Figure~\ref{gamma}.  Note that some of the arcs $b,c,d$ and their respective counterparts $\hat b, \hat c, \hat d$ may be boundary segments of $\tilde{\bf S}_i$.  Let $\delta$ denote the arc in $\tilde{\bf S}_i$ corresponding to $a_1$, for $a_1$ as in Definition~\ref{def:bcd}.  Also, let $\gamma$ be the arc in $\tilde{\bf S}_i$ with endpoints $s$ and $\hat{s}$ as in Figure~\ref{gamma}.  First, we claim that its corresponding entry $m_{\gamma}$ in the frieze belongs to the last row of $\calB$.

\begin{figure}[h!]
\centering
\scalebox{.82}{\hspace*{-0.6cm}
\def\svgwidth{4.5in}
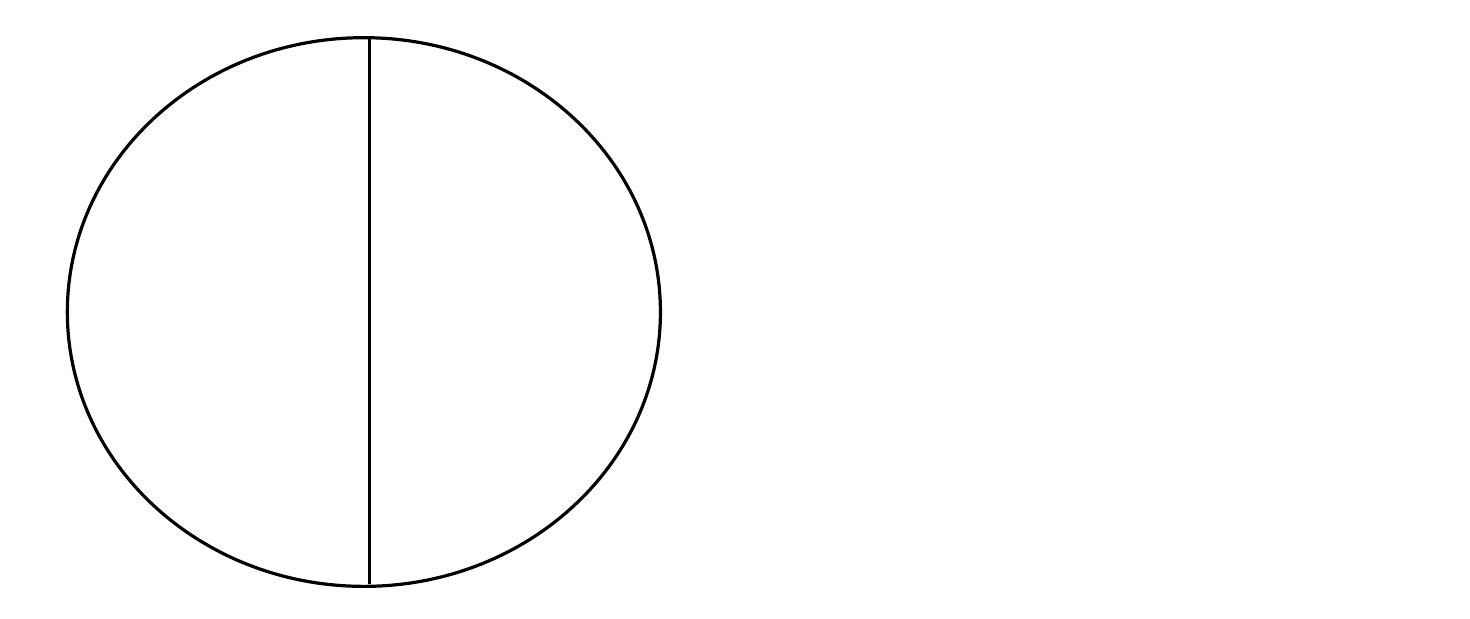}
\caption{(a) Lemma \ref{lem:ray} and \ref{overlap}; (b) Lemma \ref{lastcase}.}
\label{gamma}
\end{figure}

Recall Definition~\ref{def:pivot} of elementary moves in the category of arcs.  Note, that the construction of $\tilde{\bf S}_i$ implies that if it takes $k$ elementary moves to go from $s$ to $\delta$'s nearest endpoint, then it takes $n-2 - k$ moves to go from $\hat{s}$ to $\delta$'s other endpoint.  Moreover, the longest row of $\calB$ is characterized by this; it is the row that can be reached from $a_1$ after $n-2$ elementary moves always moving down from one row to the next (if $a_1$ is considered as the bottom row of $\calB$), see Figure~\ref{AR-quiver-1}.  Therefore, this shows the claim that $m_{\gamma}$ belongs to the the longest row of $\calB$.

Now, given $\gamma$ and moving $\hat{s}$ clockwise while keeping $s$ fixed we obtain the arc $a$.  This shows that there exists a sectional path in $\frakG_{\T}$ starting in $1_a$ and ending in $m_{\gamma}$,  see Remark~\ref{sec_paths}.  This implies that $m_{\gamma}$ is the endpoint of $(1_a, b_2)$ in the longest row of $\calB$.  Similarly, moving $s$ counterclockwise while keeping $\hat{s}$ fixed we obtain the arc $\hat{b}$.  Thus, there exists a sectional path in $\frakG_{\T}$ starting in $m_{\gamma}$ and ending in $1_{\hat b}$.  This proves that the starting point of $(\hat{b}_2, 1_{\hat{b}})$ and the endpoint of $(1_a, b_2)$ in $\calB$ coincide. 
\end{proof}

In view of the last lemma, from now on we will refer to both $b_j$ and $\hat b_j$ as $b_j$.


\subsection{Regions in $\frakG_{\T}$}

Here we define regions in $\frakG_{\T}$ that are determined by the set of rays bounding them. Except for the region $\mathcal{I}$ that arises as the intersection of certain regions in $\frakF_{\tT}$ determined by $\mu_a$ and $\mu_{\hat a}$, the remaining regions in $\frakG_{\T}$ are analogous to the corresponding ones in type $A$, see Definition \ref{def_regionsA}.

{\bf Region $\mathcal{I}$:} 
It is defined as the triangular region in $\calB$ bounded by $(i_e, b_4)$ and $(b_1, i_e)$ in Figure \ref{cases-1}, and by
$(b_1, i_b)$ and $(i_b, b_3)$ in Figure \ref{cases-2}.


{\bf Region $\calZ_D$:} 
It is given by the entries in $\frakG_{\T} \setminus \mathcal{I}$ that are bounded by rays starting or ending in $1_a, 1_{\hat a}$ and $2_a, 2_{\hat a}$. For example, in Figure~\ref{cases-1}, $\mathcal{Z}_D$ is given by five rectangles with vertices 
$$\{i_{\hat a}, 1_{\hat b}, 2_{\hat a}, i_{\hat d} \} \hspace{.5cm} \{ 2_{\hat a}, 1_{\hat c}, 1_{\hat a}, 1_{\hat e} \} \hspace{.5cm} \{1_{\hat a}, 1_{b}, 2_{a}, 1_{\hat d} \} \hspace{.5cm} \{ 2_a, 1_c, 1_a, 1_e \} \hspace{.5cm} \{ 1_a, 1_d, i_a, i_b  \}, $$

In Figure~\ref{cases-2}, $\mathcal{Z}_D$ consists of four rectangular regions. Note that some of these rectangles might be empty, which occurs when two  parallel sides of the rectangle are neighboring rays. 

We also let $\overline{\mathcal{Z}}_D$ denote all entries in $\frakG_{\T}$ that belong to $\mathcal{Z}_D$ or the rays that bound $\mathcal{Z}_D$.  Observe that $\calZ_D = \calZ^1_{D} \cup \calZ^2_{D}$, where ${\calZ}^1_{D}$ is the union of rectangular regions that have a 1 as a leftmost corner or a 2 as a rightmost corner, and ${\calZ}^2_{D}$ is given by rectangles that have a 2 as a leftmost corner or a 1 as a rightmost corner.

{\bf Regions $\calY_D$ and $\calX_D$:}
In Figure \ref{cases-1}, let $\mathcal{Y}_D$ denote the entries in one of the five rectangular regions with vertices $$\{ 1_e,1_a,b_1, i_b \} \hspace{.5cm} \{i_e, i_{\hat a},0_{\hat y}, 1_{\hat b} \} \hspace{.5cm} \{ 0_{y}, 1_{d}, 1_{a}, 1_{c} \}\hspace{.5cm} \{ 0_{\hat y}, 1_{\hat d}, 1_{\hat a}, 1_{\hat c} \} \hspace{.5cm} \{ 0_{y}, 1_{b}, 1_{\hat a}, 1_{\hat e} \},  $$ 
and let $\mathcal{X}_D$ denote the entries in one of the five rectangular regions with vertices
$$\{ i_{\hat d},b_{4},2_{\hat a},1_{\hat e} \}\hspace{.5cm} \{1_{\hat b}, 0_{\hat x}, 1_{\hat c}, 2_{\hat a} \} \hspace{.5cm} \{ 1_{\hat d}, 0_{\hat x}, 1_{e}, 2_a \} \hspace{.5cm} \{ 1_b, 2_a, 1_c,  0_x \} \hspace{.5cm} \{ 1_d, 0_x, i_e, i_a \}. $$ 
We have corresponding regions in Figure \ref{cases-2}. Moreover, we include in $\calY_D$ (resp. $\calX_D$) the interior of the rays that bound both $\calY_D$ (resp. $\calX_D$) and $\mathcal{I}$. For example, in Figure \ref{cases-1}, the interior of $(i_a, i_e)$ belongs to $\calX_D$. As happens with $\calZ_D$, some rectangles in $\calX_D$ or $\calY_D$ can be empty. Notice that $\calX_D, \calY_D$ interchange positions if $1_a$ and $2_a$ switch, that is, if we look at $\frakG_{\mu_a(\T)}$ instead of $\frakG_{\T}$.

{\bf Region $\calF_D$:}
Finally, we define $\mathcal{F}_D$ to be the set of entries in $\frakG_{\T}$ that are not in $\overline{\mathcal{Z}}_D\cup\mathcal{X}_D\cup\mathcal{Y}_D \cup \mathcal{I}$. 

\smallskip

Recall that in $\frakF_{\tT}$ we also have regions $\mathcal{Z}, \overline{\mathcal{Z}}, \mathcal{X}, \mathcal{Y}, \mathcal{F}$ determined by mutation at $a$ and also regions $\hat{\mathcal{Z}}, \overline{\hat{\mathcal{Z}}}, \hat{\mathcal{X}}, \hat{\mathcal{Y}}, \hat{\mathcal{F}}$ determined by mutation at $\hat a$.   Given a region $\mathcal{W}\in \frakF_{\tT}$ let $\mathcal{W}_{\frakG}$ denote its restriction to the pattern $\frakG_{\T}$.

\begin{remark} 
For every region $\mathcal{W} \in \{\mathcal{X} , \mathcal{Y} , \mathcal{Z}, \mathcal{F} \}$ in the frieze $\frakF_{\tT}$ there is a corresponding region $\mathcal{W}_D$ in the pattern $\frakG_{\T}$ such that $\mathcal{W}_D =(\mathcal{W}\cup\hat{\mathcal{W}}\setminus \mathcal{I})_{\frakG}$ for $\mathcal{W} \in   \{\mathcal{X} , \mathcal{Y} , \mathcal{Z}\}$ and $\mathcal{F}_D =\mathcal{F}\cap \hat{\mathcal{F}}$. This follows from the definition of regions $\mathcal{W}_D$ above and $\mathcal{W}$ (Definition~\ref{def_regionsA}). Note, that in the type $A$ frieze the regions $\mathcal{X}, \mathcal{Y}, \mathcal{Z}$ determined by $\mu_a$ are disjoint, as it is always the case for a single mutation in type $A$. However, if we consider all the regions determined by the pair of mutations $\mu_a, \mu_{\hat a}$ in $\frakF_{\tT}$ then a couple of them may overlap. The region $\mathcal{I}$ corresponds to these intersections.   
\end{remark}


\subsection{Projections in $\frakG_{\T}$}

Here we define various projections for entries in $\overline{\calZ}_D, \calX_D, \calY_D$, and projections onto $\calA,\calR$ for entries in $\mathcal{I}$.  It is important to note that the former ones are similar to projections in type $A$ from Definition~\ref{projections-A} shown in Figure~\ref{fig:regions}. The only difference occurs when one of the sectional paths involved in the definition of projections in $\frakG_{\T}$ passes through $\mathcal{I}$.  In this case we introduce additional paths as follow.

\begin{defi}\label{gamma-msp}
Let $\gamma$ be a maximal sectional path in $\frakG_{\T}$ starting (resp. ending) in the last row of $\calB$. Then the other endpoint of $\gamma$ is $0_{\gamma}$ in the row of 0's in $\hat \calA$ (resp. $\calA$).
Define ${\overrightarrow{\gamma}}$ (resp. ${\overleftarrow{\gamma}}$) to be the maximal sectional path starting (resp. ending) in $0_{{\gamma}}$.  See Figure~\ref{def-410}.
\end{defi}

\begin{figure}[h!]
\centering
\scalebox{.9}{
\def\svgwidth{4.8in}
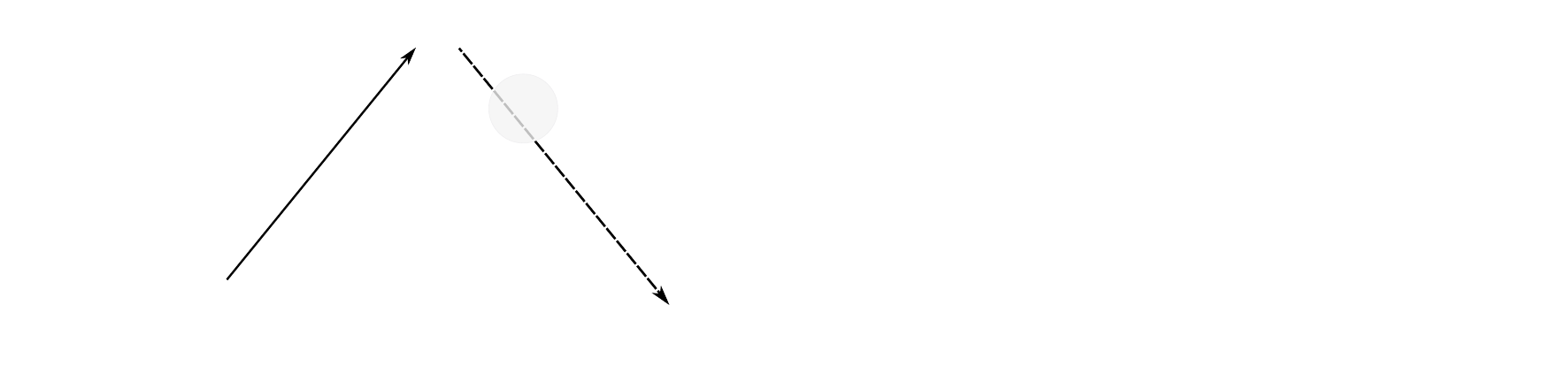}
\caption{Definition \ref{gamma-msp}.}
\label{def-410}
\end{figure}

\begin{defi}
Let $\partial \calZ$ be the set of rays bounding $\calZ_D$ that do not bound $\mathcal{I}$. As before, $\partial \calZ$ is a union $\partial \calZ_1 \cup  \partial \calZ_2$, where a ray is in $\partial \calZ_t$ if it bounds $\calZ^t_D$ for $t=1,2$.
\end{defi}

Given $m \in \overline{\mathcal{Z}}_D$, we determine four projections $\rho^i_j(m)$, where $i \in \{ \uparrow, \downarrow \}$, $j\in \{ p,s\}$. Let  $\gamma_1$ be the m.s.p. passing through $m$ in SE direction, and $\gamma_2$ be the m.s.p. passing through $m$ in NE direction.   For the next set of definitions we refer to Figure~\ref{fig:projXYZ}.

\begin{defi} (Projections of entries in $\overline{\calZ}_D$) Let $m \in \overline{\calZ}^1_D$ (resp. $\overline{\calZ}^2_D$). The projection $\rho^{\uparrow}_p (m)$ (resp. $\rho^{\uparrow}_s (m)$) is the first entry of $\partial \calZ$ crossed by $\gamma_1$, and the projection $\rho^{\downarrow}_s (m)$ (resp. $\rho^{\downarrow}_p (m)$) is the second entry of $\partial \calZ$ crossed by $\gamma_1$. Analogously, the projection $\rho^{\downarrow}_p (m)$ (resp. $\rho^{\downarrow}_s (m)$) is the first entry of $\partial \calZ$ crossed by $\gamma_2$, and the projection $\rho^{\uparrow}_s (m)$ (resp. $\rho^{\uparrow}_p (m)$) is the second entry of $\partial \calZ$ crossed by $\gamma_2$.
\end{defi}

Depending on the position of $m$, one of these projections might not be defined. This happens precisely when $\gamma_2$ crosses $\mathcal{I}$.  In this case, we obtain the last projection as follows.  Here, let $\overline{\mathcal{I}}$ denote entries in $\mathcal{I}$ and the rays bounding it.

\begin{defi} (Projections of entries in $\overline{\calZ}_D$ continued)
Let $m \in \overline{\calZ}^1_D$ (resp. $ \overline{\calZ}^2_D$). If the starting point of $\gamma_2$ is in $\overline{\mathcal{I}}$, then let the remaining projection $\rho_p^{\downarrow} (m)$ (resp. $\rho_s^{\downarrow} (m)$) be the first intersection of $\overrightarrow{\gamma_2}$ and $\partial \calZ_1$ (resp. $\partial \calZ_2$).  If the ending point of $\gamma_2$ is in $\overline{\mathcal{I}}$, then let the remaining projection $\rho_s^{\uparrow} (m)$ (resp. $\rho_p^{\uparrow} (m)$) be the last intersection of $\overleftarrow{\gamma_2}$ and $\partial \calZ_1$ (resp. $\partial \calZ_2$).
\end{defi}

\begin{figure}[h!]
\centering
\scalebox{.88}{\def\svgwidth{7.1in}
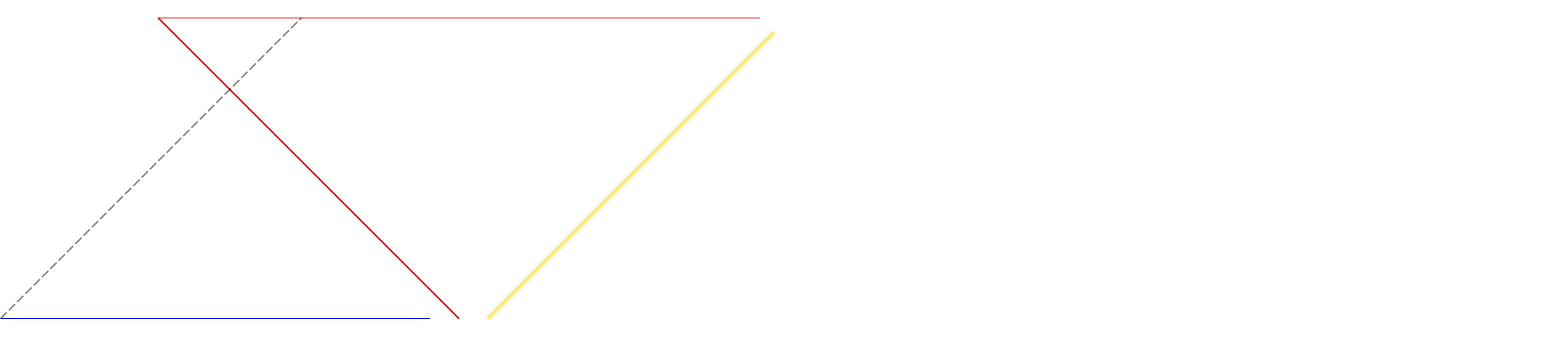}
\caption{Definition of projections in $\calZ_D$ (left) and $\calY_D$ (right) when one of the sectional paths starts at $\mathcal{I}$.}
\label{fig:projXYZ}
\end{figure}

\begin{defi}(Projections of entries in ${\mathcal{X}}_D$ and ${\mathcal{Y}}_D$)
Given $m$ in $\calX_D$ or $\calY_D$ there exist unique sectional paths  $\gamma^+$ and $\gamma^-$  starting and ending at $m$ respectively, such that $\gamma^+$ and $\gamma^-$ intersect $\partial \calZ \cup \mathcal{I}$, and $\gamma^- \circ \gamma^+$ is not sectional. The projection of $m$ onto the closest ray of $\partial \calZ$ along $\gamma^+$ is denoted by $\rho^+_1(m)$ and the projection onto the second closest ray is $ \rho^+_2(m)$. The projection of $m$ onto the closest ray of $\partial \calZ$ along $\gamma^-$ is denoted by $\rho^-_1(m)$ and onto the second closest ray is denoted by $\rho^-_2(m)$.
\end{defi}

Note that some of these projections might not exist when $\gamma^+$ or $\gamma^-$ end or start in $\calB$ prior to intersecting one or both of the rays in $\partial \calZ$.  In this case, let $\gamma_+$ or $\gamma_-$ be a maximal sectional path in $\frakG_{\T}$ obtained from extending $\gamma^+$ or $\gamma^-$ respectively.

\begin{defi}(Projections of entries in ${\mathcal{X}}_D$ and ${\mathcal{Y}}_D$ continued) Define the remaining projections of $m$ to be the respective projections of $\overleftarrow{\gamma_+}$ or $\overrightarrow{\gamma_-}$ onto rays in $\partial \calZ$. More precisely, if both projections along $\gamma^+$ do not exist then define $\rho^+_1(m)$ and $\rho^+_2(m)$ to be the second to last and last intersection along $\overleftarrow{\gamma_+}$ with these rays respectively.  Similarly, if both projections along $\gamma^-$ do not exist then define $\rho^-_1(m)$ and $\rho^-_2(m)$ to be the second and first intersection along $\overrightarrow{\gamma_-}$ with these rays respectively. If just one of the projections along $\gamma^{+}$ or $\gamma^-$ was not defined, the remaining projection is obtained via $\overleftarrow{\gamma_+}$ or $\overrightarrow{\gamma_-}$ as the last or first intersection with $\partial\calZ$.
\end{defi}

The following lemma relates projections in $\frakF_{\tT}$ and $\frakG_{\T}$.  Given an entry $m\in \frakG_{\T}$ we can lift it to an entry $\tilde{m}\in \frakF_{\tT}$.   This lift is unique, except for the elements of $\calR$.  In this case there are two copies one in $\calR$ and the other one in $\hat \calR$, 
however the projections in $\frakF_{\tT}$ of the two lifts onto rays determined by mutation at $\mu_a$ and $\mu_{\hat a}$ respectively will be the same due to the particular symmetry of this type $A$ frieze.  Therefore, we may choose either of the lifts and the following notation makes sense.

For each projection $\rho(m)$ in $\frakG_{\T}$ there is a corresponding projection $\pi(\tilde{m})$ in $\frakF_{\tT}$ of its lift.  
  
\begin{lemma}\label{projlemma}
Let $m$ be an entry in $\frakG_{\T}$ that belongs to a region $\mathcal{W}_D$. Then each projection $\rho(m)$ in $\frakG_{\T}$ equals its corresponding projection $\pi(\tilde{m})$ in $\frakF_{\tT}$.
\end{lemma}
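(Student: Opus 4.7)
The plan is to verify the equality $\rho(m)=\pi(\tilde m)$ by a case analysis on the region $\mathcal{W}_D$ containing $m$ and on which of the four (or two) projections we are computing, exploiting the explicit dictionary between $\frakG_{\T}$ and $\frakF_{\tT}$. Recall from Definition~\ref{def:patternG} that $\frakG_{\T}$ is obtained from the first $n-1$ nontrivial rows of $\frakF_{\tT}$ by deleting the region $\calC$ and identifying $\calR$ with $\hat{\calR}$ along the sectional path $\mathfrak{s}$ through $1_i$. Moreover, by the explicit definition of rays in Cases 1 and 2, the rays bounding $\overline{\calZ}_D$ and $\mathcal{I}$ in $\frakG_{\T}$ are precisely the portions surviving in $\frakG_{\T}$ of the rays that bound $\overline{\calZ}$ and $\overline{\hat{\calZ}}$ in $\frakF_{\tT}$ under mutation at $a$ and $\hat a$. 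Hence every sectional path used to define a projection in $\frakG_{\T}$ lifts uniquely (the ambiguity on $\calR=\hat\calR$ being irrelevant) to a sectional path of $\frakF_{\tT}$.

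First, I would handle the generic situation in which the sectional path defining $\rho(m)$ stays within a single copy $\calA\cup\calB$ or $\hat{\calA}\cup\calB$ of the pattern, without reaching $\mathcal{I}$. In this case the lifted path in $\frakF_{\tT}$ coincides verbatim with the path in $\frakG_{\T}$, and it lands on the same bounding ray. Since the definitions of $\pi$ (Definition~\ref{projections-A}) and of $\rho$ agree locally, the equality $\rho(m)=\pi(\tilde m)$ is immediate from inspection.

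Second, I would treat the case where the path in $\frakG_{\T}$ crosses the identified locus $\calR=\hat\calR$. Its lift in $\frakF_{\tT}$ enters $\mathfrak{s}$ (possibly through $1_i$) and, after crossing, continues in the opposite copy of the pattern. Using Lemma~\ref{lemma-R}, which provides the reflective symmetry of $\frakF_{\tT}$ through $\mathfrak{s}$ that identifies $\calA$ with $\hat\calA$ and the rays determined by $\mu_a$ with those determined by $\mu_{\hat a}$, the ray in $\frakF_{\tT}$ where the lifted projection lands equals, under the identification, the ray in $\frakG_{\T}$ on which $\rho(m)$ is defined. In particular, the entry at which the projection occurs is the same integer on both sides.

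The last and most delicate case involves the auxiliary paths $\overrightarrow{\gamma},\overleftarrow{\gamma}$ of Definition~\ref{gamma-msp}, invoked when the natural sectional path from $m$ dies in $\calB$ before it can reach the relevant ray of $\partial\calZ$. In $\frakF_{\tT}$ such a situation corresponds to a sectional path leaving the top $n-1$ rows via a $0$-entry and reentering the pattern through the region $\calC$. The construction of $\overrightarrow{\gamma},\overleftarrow{\gamma}$ replaces this excursion through $\calC$ by the mirror sectional path starting (respectively ending) at the same entry $0_\gamma$ in the opposite copy of $\calA\cup\calB$; the involution of $\frakF_{\tT}$ through $\mathfrak{s}$ again guarantees that the target ray of the extended path in $\frakG_{\T}$ corresponds to the target ray of the true sectional path in $\frakF_{\tT}$. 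The main obstacle is the bookkeeping across the subcases ($\overline{\calZ}_D^1$ versus $\overline{\calZ}_D^2$, $\calX_D$ versus $\calY_D$, and Cases 1 versus 2 for the position of $1_a,2_a$), but each subcase reduces to the same observation: the identification $\calR=\hat\calR$ encodes precisely the reflective symmetry of $\frakF_{\tT}$, so the projections computed in the two patterns coincide.
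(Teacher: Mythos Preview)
Your three-case decomposition matches the paper's, and the generic case is handled the same way. The difficulty is in the tool you invoke for the other two cases: a ``reflective symmetry / involution of $\frakF_{\tT}$ through $\mathfrak{s}$'' identifying $\calA$ with $\hat\calA$. No such reflection is established in the paper, and Lemma~\ref{lemma-R} only gives the equality of \emph{entries} $r_j=\hat r_j$, not an automorphism of $\frakF_{\tT}$ fixing $\mathfrak{s}$ pointwise. (Indeed the quiddity sequence $(2d,b,a_2,\dots,a_n,a_1,a_2,\dots,a_n,c)$ is not palindromic around $2d$ in general, since $b\neq c$.) So as written the second and third cases rest on an unproved symmetry.

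The paper closes these cases with two different, more concrete facts. For paths crossing $\calR$: the $\frakG_{\T}$-path is the concatenation of two $\frakF_{\tT}$-paths $(\tilde m_1,\tilde r_i)$ and $(\tilde r_i,\tilde m_2)$ lying in $\hat\calA$ and $\calA$ respectively; the projection $\pi(\tilde m)$ computed in $\frakF_{\tT}$ by continuing past $\hat\calR$ must land in $\hat\calA$ (there are no $1$'s or $2_a,2_{\hat a}$ in $\calC\cup\calB$), and then Lemma~\ref{gluerays}---not Lemma~\ref{lemma-R}---says the rays in $\calA$ and $\hat\calA$ are identical, so $\pi(\tilde m)=\rho(m)$. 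For the auxiliary-path case: in $\frakF_{\tT}$ the relevant sectional path from $\tilde m$ does \emph{not} exit via a $0$ and reenter through $\calC$ as you describe; it continues straight through $\tilde x\in\mathcal{I}$ into the lower rows and meets $\partial\hat\calZ$ there. The match with $\overrightarrow\gamma$ (or $\overleftarrow\gamma$) comes from the standard glide symmetry of \emph{any} type~$A$ frieze---the NE maximal path $(0,0_\gamma)$ has the same entries as the SE maximal path $(0_\gamma,0)$---together with the observation that when two projections are recovered via $\overrightarrow\gamma$, the order ``first/second'' swaps relative to $\tilde\gamma$. Replacing your involution by these two ingredients makes the argument go through.
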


\begin{proof}  
Most projections in $\frakG_{\T}$ are given by sectional paths identical to the ones appearing in $\frakF_{\tT}$. However, we can distinguish two cases arising in a different way. 

The first case occurs when the sectional path defining the projections crosses $\calR$. Let $m \in \mathcal{W}_D \subset \hat\calA \cup \calR \cup \calA$ and let $(m_1,m_2)$ be the path used to compute $\rho(m)$ in $\frakG_{\T}$, see Figure \ref{AR-quiver-2}. This path is the projection of two paths from $\frakF_{\tT}$, namely $(\tilde{m}_1, \tilde{r}_i)$ that starts at $\hat\calA$ and ends at $\hat \calR$ (see the NW corner of $\frakF_{\tT}$ in Figure \ref{AR-quiver-1}), and $(\tilde{r}_i, \tilde{m}_2)$ that starts at $\calR$ and ends at $\calA$ (see the SW corner of $\frakF_{\tT}$ in Figure \ref{AR-quiver-1}). For $\tilde{m} \in (\tilde{m}_1, \tilde{r}_i)$ the projection $\pi(\tilde{m})$ in  $\frakF_{\tT}$  that is obtained by crossing $\hat{\calR}$ belongs to $\hat \calA$, since there are no $1$'s nor $2_{a}, 2_{\hat a}$ in $\calC \cup \calB$. We know by Lemma \ref{gluerays} that the rays starting or ending at $1_a,2_a \in \calA$ and the corresponding ones starting or ending at $1_{\hat a}, 2_{\hat a} \in \hat \calA$ look identical in both copies. Hence $\pi(\tilde{m}) = \rho(m)$. The same occurs for $\tilde{m} \in (\tilde{r}_i, \tilde{m}_2)$.  

The second case is when we use an auxiliary maximal path $\overrightarrow{\gamma}$ or $\overleftarrow{\gamma}$. We will explain the case of $\overrightarrow{\gamma}$. Let $m$ be an entry in $\calB \cup \hat \calA$, see Figure \ref{AR-quiver-2}. Suppose that $m$ lies in a maximal path $\gamma=(x, 0_\gamma)$, where $x \in \mathcal{I}$, and we need to use $\overrightarrow{\gamma}$ to compute a single projection, if $m \in \overline{\calZ}_D$, or at most two projections, if $m $ lies in $\calX_D$ or $\calY_D$. The associated projection $\pi(\tilde{m})$ in $\frakF_{\tT}$ is obtained as an intersection of the ray $\tilde{\gamma}$, that passes through $\tilde{x}$ and ends at $\tilde{m}$, and rays starting or ending at $1_{\hat a}, 2_{\hat a}$.  For every frieze of type $A$, and particularly in our setting, the NE maximal path $(0, 0_\gamma)$ in $\frakF_{\tT}$ is equal to the SE maximal path $(0_\gamma, 0)$. Thus, the entries obtained as an intersection of $\tilde{\gamma}$ and the boundary of $\hat \calZ$ will coincide with the ones we find using $\overrightarrow{\gamma}$. 
Observe that in the case of two projections we have that first $\pi^{-}_1(\tilde{m})$ and second $\pi^{-}_2(\tilde{m})$ projections along $\tilde{\gamma}$ correspond to second and first projections $\rho_1^-(m)$ and $\rho_2^-(m)$ along  $\overrightarrow{\gamma}$  respectively.  
\end{proof}

Next, we consider projections defined for entries in $\calB$, and particularly for those entries that belong to $\mathcal{I}$.

\begin{defi}\label{defi:projR}(Projections onto $\mathcal{R}$)
Given an entry $m$ lying in $\mathcal{B}$, let $\gamma_+$ (resp. $\gamma_-$) be a maximal sectional path in $\frakG_{\T}$ passing through $m$ and ending (resp. starting) in the last row of $\calB$.  Then  $\overleftarrow{\gamma_+}$ (resp. $\overrightarrow{\gamma_-}$) passes through $\calR$, define $\rho_{R^+}(m)$ (resp. $\rho_{R^-}(m)$) to be the unique entry in $\overleftarrow{\gamma_+}\cap \calR$ (resp. $\overrightarrow{\gamma_-}\cap \calR$). 
\end{defi}

\begin{defi}(Projections onto $\mathcal{A}$ and $\hat{\mathcal{A}}$)
Given an entry $m$ lying in $\mathcal{B}$, there are two rays $\gamma^-, \gamma^+$ in $\frakG_{\T}$ passing through $m$ and starting/ending in $\mathcal{A}, \hat{\mathcal{A}}$ respectively.  Let $\rho^-_A(m)$ denote the entry in $\mathcal{A}$ lying on the ray $\gamma^-$ such that all proper successors of $\rho^-_A(m)$ on $\gamma^-$ belong to $\mathcal{B}$.   Similarly, let $\rho^+_A(m)$ denote the entry in $\hat{\mathcal{A}}$ lying on the ray $\gamma^+$ such that all proper predecessors of $\rho^+_A(m)$ on $\gamma^+$ belong to $\mathcal{B}$.
\end{defi}

\begin{lemma}\label{lastcase}
For any $m\in \mathcal{B}$ 
$$m=\rho_R^+(m)\rho_A^+(m)+\rho_R^-(m)\rho_A^-(m).$$
\end{lemma}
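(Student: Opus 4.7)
The plan is to lift $m$ to the ambient type $A$ frieze $\frakF_{\tT}$ and apply the Ptolemy relation in the polygon $\tilde{\bf S}_i$ to the quadrilateral whose diagonals are the cut arc $i$ and the arc $\gamma_m$ corresponding to $m$. Since $m\in\calB$, the arc $\gamma_m$ crosses $i$ exactly once; let $s\in{\bf S}_i^1$ and $\hat s\in{\bf S}_i^2$ denote its endpoints, and let $p$ and $q$ be the two endpoints of $i$ in $\tilde{\bf S}_i$, where $p$ is the identified puncture and $q$ the identified boundary vertex. The four points $p,s,q,\hat s$ are the vertices of a quadrilateral with diagonals $i$ and $\gamma_m$, so the Ptolemy relation for cluster variables attached to arcs (cf.\ Section~\ref{CCmap} and \cite[Section 2.2]{FP}) gives
\[
x_i\cdot x_{\gamma_m}\;=\;x_{(ps)}\,x_{(q\hat s)}\;+\;x_{(sq)}\,x_{(\hat s p)}.
\]
Evaluating at $x_j=1$ for every $j\in\tT$, as in Section~\ref{CCmap}, and using that $i\in\tT$ so $x_i$ evaluates to $1$, one obtains the frieze identity
\[
m\;=\;(ps)(q\hat s)+(sq)(\hat s p),
\]
where each parenthesized expression is the corresponding frieze entry.

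I next place the four factors inside $\frakG_{\T}$. By Lemmas~\ref{lemma-R} and~\ref{lemma-A}, arcs attached to the puncture correspond to entries on $\calR$, so both $(ps)$ and $(\hat s p)$ are $\calR$-entries. The arc $(sq)$ sits in the right half ${\bf S}_i^1$ and is not attached to the puncture, hence its entry lies in $\calA$; symmetrically $(q\hat s)$ lies in $\hat\calA$. Thus the two Ptolemy summands already have the expected shape $(\calR\text{-entry})(\hat\calA\text{-entry})+(\calA\text{-entry})(\calR\text{-entry})$.

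It then remains to match each factor with the correct projection. The two rays through $m$ in $\frakG_{\T}$ correspond, via Remark~\ref{sec_paths}, to elementary moves of one endpoint of $\gamma_m$ at a time: the ray that fixes $\hat s$ visits successively $(\hat s p)\in\calR$ and $(q\hat s)\in\hat\calA$, while the ray that fixes $s$ visits $(ps)\in\calR$ and $(sq)\in\calA$. Hence the Ptolemy partners $(ps)$ and $(q\hat s)$ come from \emph{different} rays through $m$, and similarly for $(sq)$ and $(\hat s p)$. Tracing the auxiliary paths $\overleftarrow{\gamma_+}$, $\overrightarrow{\gamma_-}$ of Definition~\ref{gamma-msp} through the identification $\calR\leftrightarrow\hat\calR$ of Definition~\ref{def:patternG}, one verifies that $(ps)(q\hat s)=\rho_R^+(m)\,\rho_A^+(m)$ and $(sq)(\hat s p)=\rho_R^-(m)\,\rho_A^-(m)$, which yields the claim.

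The hard part will be this last matching step. The Ptolemy identity delivers the algebraic expression for free, but writing $(ps)(q\hat s)$ as $\rho_R^+(m)\rho_A^+(m)$ is subtle because $\rho_R^+$ and $\rho_A^+$ are defined via two genuinely different sectional paths in $\frakG_{\T}$ (one via $\overleftarrow{\gamma_+}$, the other via $\gamma^+$). The reversal built into the identification $\calR\leftrightarrow\hat\calR$ (which sends the $j$-th row of $\calR$ to the $(n-j)$-th row of $\hat\calR$) is what couples a $\calR$-entry coming from one ray with an $\calA$- or $\hat\calA$-entry coming from the other, and verifying this combinatorial pairing carefully is the heart of the argument.
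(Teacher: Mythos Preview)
Your approach is essentially the same as the paper's: apply the Ptolemy relation to the quadrilateral in $\tilde{\bf S}_i$ with diagonals $i$ and the arc $\gamma_m$ corresponding to $m$, then identify the four sides with the four projections $\rho_R^{\pm}(m),\rho_A^{\pm}(m)$ via the arc--sectional-path correspondence and the identification $\calR\leftrightarrow\hat\calR$. The paper carries out the matching step a bit more explicitly (tracking the elementary moves from $\gamma_m$ through the degenerate arc at $s$ to the arc in $\calR$), whereas you assert it with ``one verifies that\ldots''; since you correctly flagged this pairing as the crux, it would strengthen your write-up to actually trace one of the projections through Definitions~\ref{gamma-msp} and~\ref{defi:projR} rather than leaving it implicit.
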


\begin{proof}
Let $m_{\gamma}$ be an entry in $\calB$ corresponding to an arc $\gamma\in \tilde{\bf S}_i$ in the polygon, see Figure~\ref{gamma}(b).  By Ptolemy relations we obtain 
$$m_{\gamma}m_{i} = m_{\hat \alpha}m_{\beta}+m_{\alpha}m_{\hat \beta}$$
and note that $m_{i}=1$ since the arc $i$ is a part of the triangulation, see Remark~\ref{frieze_entries}(1).  Next, we show that this relation is the same as the one appearing in the statement of the lemma. 

First, observe that $m_{\hat \alpha}\in \hat \calA$ and $m_{\alpha}\in\calA$.   Moreover, there exist unique sectional paths $m_{\gamma} \to \cdots \to m_{\hat \alpha}$ and $m_{\alpha} \to \cdots \to m_{\gamma}$ in $\frakG_{\T}$.  These paths lie in $\calB$ except for the endpoints $m_{\alpha_1}, m_{\alpha}$.  This implies $m_{\alpha}=\rho^-_A(m_{\gamma})$ and $m_{\hat \alpha}=\rho^+_A(m_{\gamma})$.

Similarly, $m_{\beta}\in \calR$ and $m_{\hat \beta}\in \hat \calR$ in  $\frakF_{\tT}$. Moreover, $\gamma$ is obtained from $\beta$ in a sequence of elementary moves that correspond to desired sectional paths in $\frakG_{\T}$ as follows.  First, move $\hat{s}$ clockwise until it reaches $s$ while keeping $s$ fixed.  Thus, obtain a point $s$ on the boundary, which corresponds to some entry $0_{\beta}$ in the row of zeros in $\calA$.  We can think of $s$ as an arc in $\tilde{\bf S}_i$ where both of its endpoints are $s$.  Next, starting from this arc at $s$ obtain $\beta$ by moving one of its endpoints clockwise until it reaches $q$ while keeping the other fixed.  Observe that this sequence of moves in $\tilde{\bf S}_i$ from $\gamma$ to $\beta$ yields a sequence of sectional paths in $\frakG_{\T}$ in Definition~\ref{defi:projR} of $\rho_R^-(m_{\gamma})$.  This implies that $m_{\beta}=\rho_R^-(m_{\gamma})$.  Similarly, we conclude that $m_{\hat\beta}=\rho_R^+(m_{\gamma})$.  
\end{proof}

\begin{theorem}\label{main-theorem}
Let $m\in \frakG_{\T}$, and let $m'\in \frakG_{\T'}$ be the corresponding integer after mutation at vertex $a$.  Then $\delta_a (m):= m-m'$ is computed as follows.

\begin{itemize}\setlength\itemsep{4pt}
\item If $m \in \calX_D$ then $\delta_a (m) = [\rho_1^+(m) - \rho_2^+(m)] [\rho_1^-(m) - \rho_2^-(m)]$.
\item If $m\in \calY_D$ then $\delta_a (m) = -[\rho_2^+(m) - 2 \rho_1^+(m)] [\rho_2^-(m)- 2 \rho_1^-(m)]$.
\item If $m \in \overline{\calZ}_D$ then $\delta_a (m) = \rho_s^\downarrow (m) \rho_p^\downarrow(m) + \rho_s^\uparrow(m) \rho_p^\uparrow(m) - 3 \rho_p^\downarrow(m) \rho_p^\uparrow(m)$. 
\item If $m\in\calF_D$ then $\delta_a(m)=0$, i.e. $m$ does not change.
\item If $m\in\mathcal{I}$ then $m'=\rho_R^+(m)'\rho_A^+(m)'+\rho_R^-(m)'\rho_A^-(m)'$.
\end{itemize}
\end{theorem}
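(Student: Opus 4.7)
The plan is to reduce everything to the type $A$ mutation formula (Theorem~\ref{thmA}) via Theorem~\ref{mut-thm}, which identifies a single mutation at $a$ in the type $D$ frieze with the pair of mutations $\mu_a\mu_{\hat a}$ on the type $A$ frieze $\frakF_{\tT}$. Since $a$ and $\hat a$ live in opposite copies of $\Sur_i$ glued to form $\tilde \Sur_i$, the two quiver mutations involve disjoint vertices and commute, so the two updates can be applied independently and in either order.

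For $m\in \mathcal{W}_D\setminus \mathcal{I}$ with $\mathcal{W}_D\in\{\calX_D,\calY_D,\overline{\calZ}_D,\calF_D\}$, I would lift $m$ to $\tilde m\in \frakF_{\tT}$ (choosing either lift when $m\in\calR$, since both lifts share the same projection data by the symmetry used in Lemma~\ref{projlemma}). The construction of the regions shows that $\tilde m$ lies in the corresponding type $A$ region $\mathcal{W}$ (w.r.t.\ exactly one of $\mu_a$ or $\mu_{\hat a}$) and in $\calF$ with respect to the other; indeed, this is essentially the defining property that made $\mathcal{I}$ the overlap. Consequently only the relevant mutation moves $m$, Theorem~\ref{thmA} applies, and by Lemma~\ref{projlemma} the type $A$ projections $\pi(\tilde m)$ coincide with the type $D$ projections $\rho(m)$, delivering each of the first four bullets. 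The case $m\in\calF_D=\calF\cap\hat\calF$ is immediate because neither mutation touches $\tilde m$.

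For $m\in\mathcal{I}$ both mutations act nontrivially, so chaining Theorem~\ref{thmA} twice would give an unwieldy identity. I would instead bypass this by exploiting that $\mathcal{I}\subset\calB$, so Lemma~\ref{lastcase} applies in the \emph{mutated} pattern $\frakG_{\T'}$, yielding
\[
m'=\rho_R^+(m')\,\rho_A^+(m')+\rho_R^-(m')\,\rho_A^-(m').
\]
The key observation is that the decomposition of $\frakG_{\T}$ into $\calA,\hat\calA,\calR,\calB$ and the sectional-path structure used to define $\rho_R^\pm,\rho_A^\pm$ depend only on the cut at $i$, which is preserved under mutation at $a\neq i$. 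Hence the \emph{positions} of the four projections of $m$ agree in $\frakG_{\T}$ and $\frakG_{\T'}$, so the right-hand side evaluates to $\rho_R^+(m)'\rho_A^+(m)'+\rho_R^-(m)'\rho_A^-(m)'$ as stated.

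The main obstacle is the bookkeeping in the second paragraph: verifying, case by case across Figures~\ref{cases-1} and \ref{cases-2}, that $\mathcal{I}$ really is the set of entries whose lift lies in a nontrivial region for \emph{both} mutations, and that the auxiliary paths $\overrightarrow{\gamma},\overleftarrow{\gamma}$ defining the type $D$ projections correspond under the identification $\calR\leftrightarrow\hat\calR$ (Lemma~\ref{lemma-R}, Lemma~\ref{gluerays}) to the genuine sectional paths used in Definition~\ref{projections-A} for type $A$. Once this compatibility is in place, the theorem follows from Theorem~\ref{thmA} and Lemma~\ref{lastcase} with essentially no further computation.
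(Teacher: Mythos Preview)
Your proposal is correct and follows essentially the same route as the paper: the first four cases are reduced to Theorem~\ref{thmA} via the lift to $\frakF_{\tT}$ together with Lemma~\ref{projlemma}, and the $\mathcal{I}$ case is handled by applying Lemma~\ref{lastcase} in the mutated pattern $\frakG_{\T'}$, using that the cut at $i$ (and hence the $\calA,\calB,\calR$ decomposition and the projection positions) is unchanged by $\mu_a$ for $a\neq i$. Your explicit observation that for $m\notin\mathcal{I}$ the lift $\tilde m$ lies in $\calF$ with respect to one of $\mu_a,\mu_{\hat a}$, so only a single application of Theorem~\ref{thmA} is needed, is exactly what the paper leaves implicit (it is encoded in the remark that $\mathcal{W}_D=(\mathcal{W}\cup\hat{\mathcal{W}}\setminus\mathcal{I})_{\frakG}$ and $\calF_D=\calF\cap\hat\calF$).
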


\begin{proof}
The first four cases follow from the main theorem for mutations of type $A$, Theorem~\ref{thmA}, and the fact that projections in this case agree with those in type $A$,  see Lemma~\ref{projlemma}.  The last case follows from Lemma~\ref{lastcase}, since $\mathcal{I} \subset \calB$.  
\end{proof}

\begin{remark}
In order to mutate entries in $\mathcal{I}$ we need to apply the theorem above four times.  However, the mutation of projections of $m\in\mathcal{I}$ onto $\calR$ and $\calA$ fall into the first four cases of the theorem, so they can be computed directly from the given formulas.
\end{remark}

The previous theorem, together with Theorem \ref{mut-thm} and Remark \ref{properties}, allows us to mutate friezes of type $D$. Notice that $g(\frakF_{\T})$ has only one copy of $\calA$, while two copies appear in $\frakG_{\T}$. The reader can choose the easiest way to mutate entries in $\calA$, applying the theorem  to the entry in $\calA$ or the entry in $\hat{\calA}$.

\begin{figure}[h!]
\begin{center}
\scalebox{.8}{\hspace*{-0.6cm}
\def\svgwidth{4.6in}
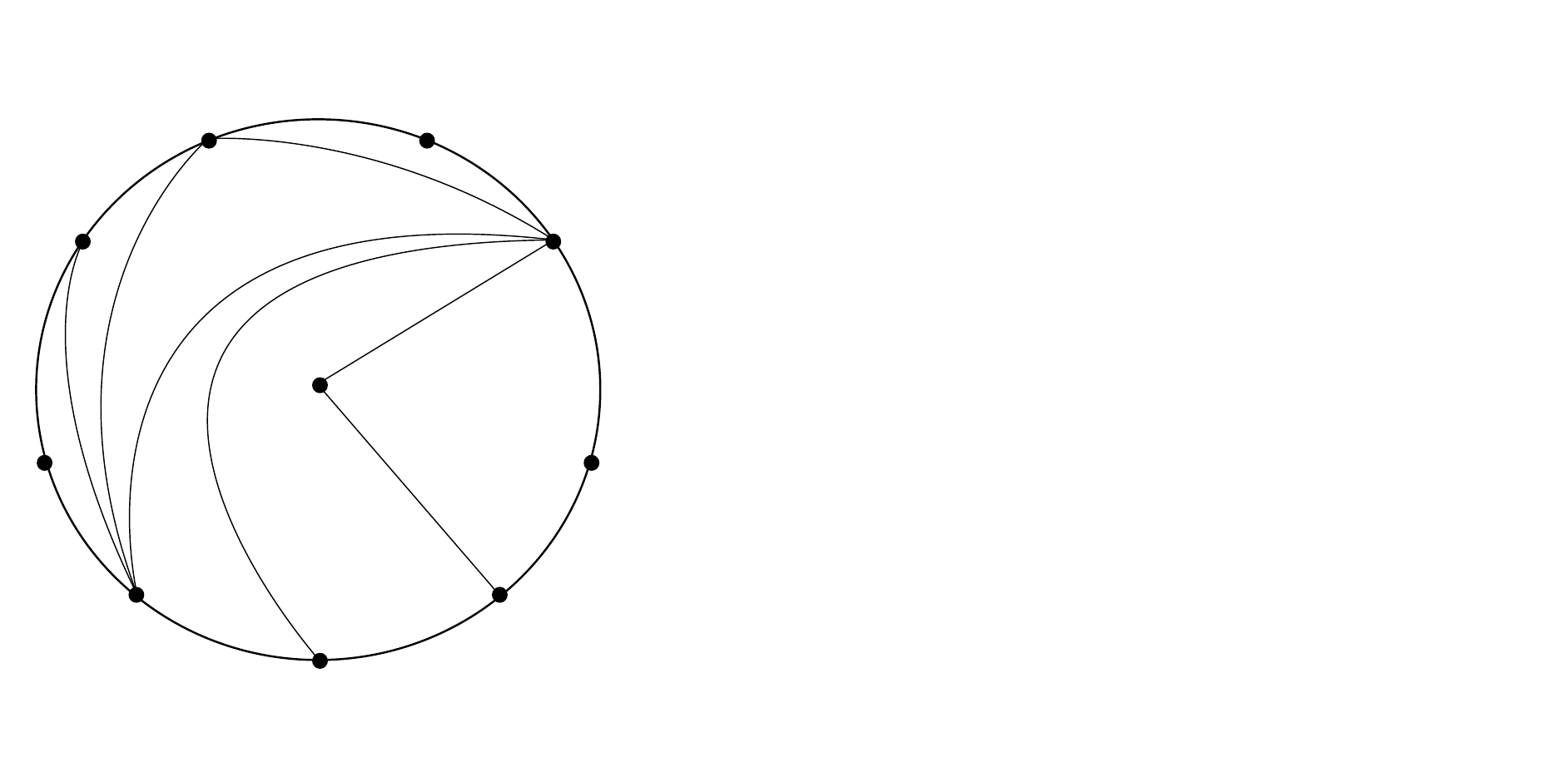}
\end{center}
\caption{Triangulation of ${\bf S}$ and the corresponding $\tilde{\bf S}_1$.}  
\label{last example triangulations}
\end{figure}

\subsection{An example}  In this section we provide a detailed example of how to compute entries in a frieze of type $D$ after mutation using Theorem~\ref{main-theorem}.  Consider the triangulation of the punctured disk in Figure~\ref{last example triangulations} and its corresponding frieze of type $D_9$ appearing on the left in Figure~\ref{frieze-ex}.

Consider the unique entry $m=104$ in $\frakG_{\T}$.  Let $a=5$, and we show how to compute $m'$ after mutation at 5.  Observe that $m\in \mathcal{I}$, so Theorem~\ref{main-theorem} implies that 
$$m'=\rho_R^+(m)'\rho_A^+(m)'+\rho_R^-(m)'\rho_A^-(m)'.$$
Note that 
$$\rho_R^+(m)= 7\hspace{1cm} \rho_A^+(m) =2 \hspace{1cm} \rho_R^-(m)= 5 \hspace{1cm} \rho_A^-(m)=18$$
are the circled entries in Figure~\ref{final-example}, each of these projections belongs to $\overline{\mathcal{Z}}_D$.  By Thoerem~\ref{main-theorem}, to compute these entries after mutation at $a$ we need to find their respective projections $\rho_s^\downarrow (\cdot ),  \rho_p^\downarrow(\cdot), \rho_s^\uparrow(\cdot), \rho_p^\uparrow(\cdot)$.  However, as shown below a lot of these projections coincide.

\begin{figure}[h!]
\begin{center}
\hspace*{-0.8cm}
\scalebox{0.72}{\def\svgwidth{9.7in}
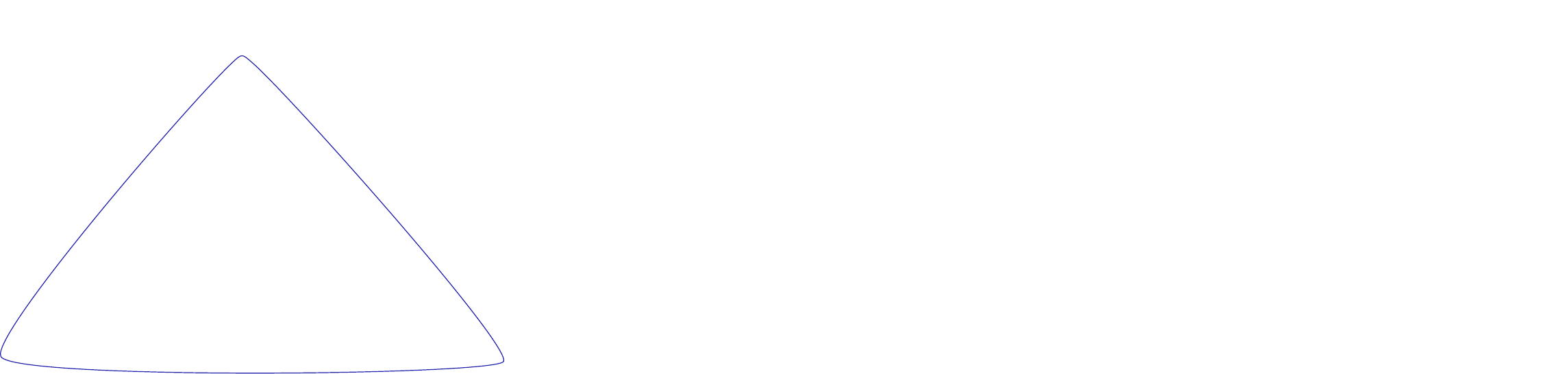}
\end{center}
\caption{Frieze of type $D_9$ coming from the triangulation in Figure~\ref{last example triangulations} and its mutation at $a=5$.}  
\label{frieze-ex}
\end{figure}

\begin{figure}[h!]
\begin{center}
\hspace*{-0.6cm}
\scalebox{0.85}{\def\svgwidth{7in}
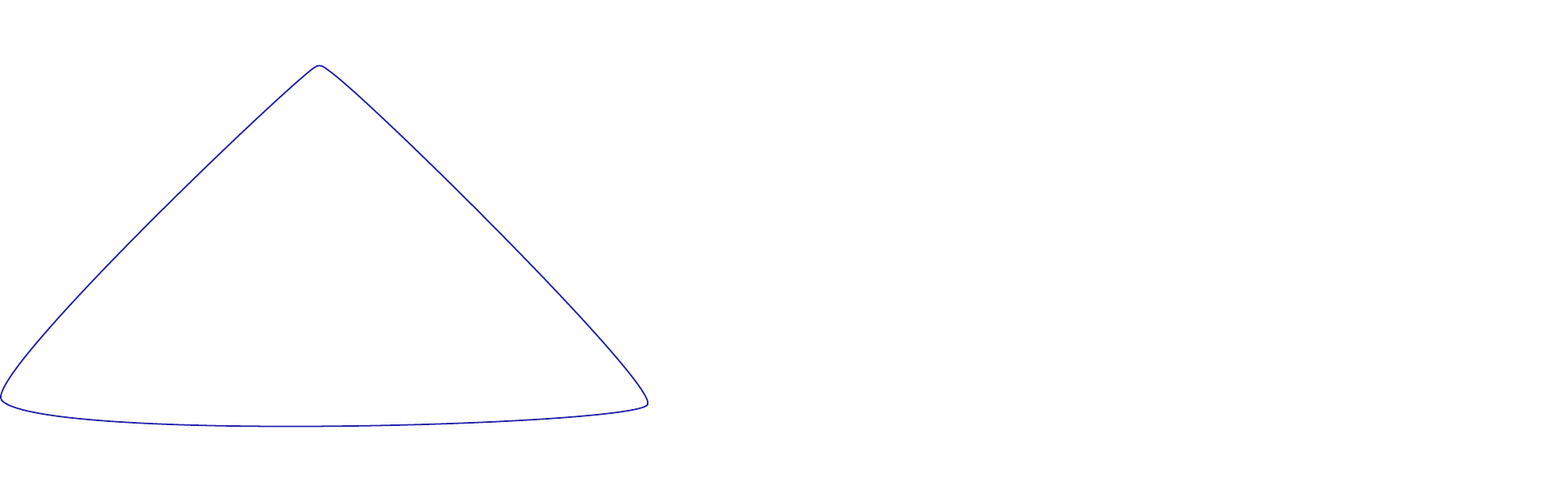}
\end{center}
\caption{Mutation of $m=104$, and the corresponding projections $\rho_R^+(m), \rho_A^+(m), \rho_R^-(m), \rho_A^-(m)$.}  
\label{final-example}
\end{figure}

$$\rho_p^\uparrow(5)=\rho_p^\downarrow(2)=\rho_A^+(m) =2$$        
$$\rho_p^\downarrow(5)=\rho_p^\uparrow(7)=2$$
$$\rho_s^\uparrow(5)=\rho_s^\downarrow(7)=3   $$
$$\rho_s^\downarrow(5)=\rho_s^\uparrow(2)=3$$
$$\rho_s^\uparrow(7)=\rho_s^\downarrow (18)=4$$
$$\rho_p^\downarrow(7)=\rho_p^\uparrow(18)=3$$

Also, 
$$\rho_p^\uparrow(2) = 1  \hspace{1cm} \rho_s^\downarrow(2)=1 \hspace{1cm} \rho_p^\downarrow(18)=5 \hspace{1cm} \rho_s^\uparrow(18)=8$$
Therefore, we obtain 
$$\rho_R^+(m)'=7- [\rho_s^\downarrow (7) \rho_p^\downarrow(7) + \rho_s^\uparrow(7) \rho_p^\uparrow(7) - 3 \rho_p^\downarrow(7) \rho_p^\uparrow(7)]=8.$$
Similar computations imply $\rho_A^+(m')=3, \rho_R^-(m)'=5, \rho_A^-(m)'=19$. We finally conclude that $m'=119$.

\smallskip

Now, consider the only entry $n=9$ in the double row of our initial frieze. By Remark \ref{properties}, this entry is related to $n_1 =27 \in \mathcal{I}$  and $n_2 =3 \in \calR$ in $\frakG_{\T}$. Then $n'$ is the quotient $n'_1 / n'_2$. For $n_1= 27$ the mutation is obtained as in the previous case
$$n'_1= \rho_R^+ (n_1)' \rho_A^+(n_1)' + \rho_R^- (n_1)' \rho_A^- (n_1)' = 12,$$   
and while we compute this, we obtain
$$\delta_5(3)= \rho_s^\downarrow (3) \rho_p^\downarrow(3) + \rho_s^\uparrow(3) \rho_p^\uparrow(3) - 3 \rho_p^\uparrow(3) \rho_p^\downarrow(3) = 1. $$
It follows that the mutation of $n_2$ is $n'_2 = 2$, thus  $n' = 12 / 2 = 6.$

\begin{remark}
We are also interested in studying the underlying representation theory of the construction that takes type $D$ to type $A$.   In the forthcoming work, we investigate the precise relationship between the two cluster-tilted algebras and their corresponding module categories.  
\end{remark}

\vskip .2cm

\noindent{\bf Acknowledgements:~}
A.G.E.  was supported by the Austrian Science Fund Project Number P30549, and she would also like to thank Department of Mathematics at UC Berkeley for inviting her for a two-week research visit, where this project was completed.
K.S. acknowledges partial support from the National Science Foundation Postdoctoral Fellowship MSPRF-1502881.
The authors also thank the anonymous referees for their comments and suggestions on the article.

\bibliographystyle{alpha}
\bibliography{friezes-5}

\end{document}